\newcommand{\iu}{{i\mkern1mu}}
\def\stretchint#1{\vcenter{\hbox{\stretchto[220]{\displaystyle\int}{#1}}}}
\def\bs{\!\!}
\renewenvironment{proof}{{\bfseries Proof.\hspace{0.3cm}}}{\qed}
\newtheorem{thm}{Theorem}[section]
\newtheorem{lem}{Lemma}[section]
\newtheorem{cor}{Corollary}[section]
\newdefinition{rmk}{Remark}[section]
\newproof{pf}{Proof}
\newproof{pot}{Proof of Theorem \ref{thm2}}
\newdefinition{definition}{Definition}
\newtheorem{post}{Postulate}[section]
\def\ps@pprintTitle{%
 \let\@oddhead\@empty
 \let\@evenhead\@empty
 \def\@oddfoot{}%
 \let\@evenfoot\@oddfoot}
\begin{document}

\begin{frontmatter}

\title{Stability of Energy Stable Flux Reconstruction for the Diffusion Problem using the Interior Penalty and Bassi and Rebay II Numerical Fluxes for Linear Triangular Elements}


\author[mymainaddress]{Samuel Quaegebeur
\corref{mycorrespondingauthor}}
\cortext[mycorrespondingauthor]{Corresponding author}
\ead{samuel.quaegebeur@mail.mcgill.ca}

\author[mymainaddress]{Siva Nadarajah}

\address[mymainaddress]{Department of Mechanical Engineering, McGill University, Montreal, QC, H3A 0C3, Canada}

\begin{abstract}
The flux reconstruction (FR) method has gained popularity within the research community. The approach has been demonstrated to recover high-order methods such as the discontinuous Galerkin (DG) method. Stability analyses have been conducted for a linear advection problem leading to the energy stable flux reconstruction (ESFR) methods also named Vincent-Castonguay-Jameson-Huynh (VCJH) methods. ESFR schemes can be viewed as DG schemes with modally filtered correction fields. Using this class of methods, the linear advection diffusion problem has been shown to be stable using the local discontinuous Galerkin scheme (LDG) to compute the viscous numerical flux. This stability proof has been extended for linear triangular and tetrahedra elements. Although the LDG scheme is commonly used, it requires, on particular meshes, a wide stencil, which raises the computational cost.

As a consequence, many prefer the compact interior penalty (IP) or the Bassi and Rebay II (BR2) numerical fluxes. This article, for the first time, derives, for both schemes, a condition on the penalty term to ensure stability. Moreover the article establishes that for both the IP and BR2 numerical fluxes, the stability of the ESFR scheme is independent of the auxiliary correction field. A von Neumann analysis is conducted to study the maximal time step of various ESFR methods.
\end{abstract}

\begin{keyword}
ESFR correction functions\sep Diffusion\sep Stability\sep Interior Penalty scheme\sep Bassi and Rebay 2 scheme
\end{keyword}

\end{frontmatter}

\newpage
\section{Introduction}

Second-order finite-volume and finite-element methods form the current heart of most, if not all, commercial computational fluid dynamics packages, in addition to in-house codes at most aerospace manufacturers. In the past two decades, discontinuous Galerkin (DG) approaches have been advanced. They offer a combination of the strengths of both the finite-volume and finite-element approaches, where the concept of a numerical flux function, to provide stability, is combined with the finite-element approach of employing high-order shape functions to represent the solution~\cite{NDG}.
A numerical flux ensures conservation across control volume faces and a number of these  numerical fluxes for the discontinuous Galerkin (DG) approach have been developed for the diffusion equation such as the Bassi-Rebay schemes (BR1~\cite{bassi_high-order_1997} and BR2~\cite{bassi_high_2000}), interior penalty (IP)~\cite{arnold_interior_1982}, local discontinuous Galerkin (LDG~\cite{cockburn_local_1998}), compact discontinuous Galerkin (CDG)~\cite{peraire_compact_2008}. For each one of these schemes, the DG method has been well-documented in terms of stability.
These methods contain a penalty term, which controls the jump of the solution and/or gradient of the solution between the cells or control volumes. A judicious choice of this parameter ensures both stability and the correct order of accuracy of the scheme.

The FR framework developed by Huynh \cite{huynh_flux_2007}, \cite{huynh_reconstruction_2009} recovers, through the use of correction functions, many high-order methods including the DG and the spectral difference methods~\cite{liu_spectral_2006}. Stability analysis for the linear advection problem have been conducted for one~\cite{vincent_new_2011}, two~\cite{castonguay_new_2012} and three~\cite{williams_energy_2014} dimensional problems, leading to the stable class of correction functions named Vincent Castonguay Jameson Huynh (VCJH) schemes. The extension from one dimension to higher dimensions has been achieved for linear simplices (triangles and tetrahedra). The mathematical proof is based on the energy of the solution and hence this class has taken the name Energy Stable Flux Reconstruction (ESFR) schemes. These stability proofs were then extended to the linear advection-diffusion problem using the LDG numerical fluxes,~\cite{castonguay_energy_2013},~\cite{williams_energy_2013},~\cite{williams_energy_2014}. They obtained the stability of the scheme by taking the penalty term to be greater than 0. However the LDG numerical fluxes may require a wide stencil~\cite{williams_energy_2013} and hence other compact methods are typically preferred. An extension of the stability of the ESFR scheme for one-dimensional problems using the compact IP and BR2 numerical fluxes has been conducted by Quaegebeur et al.~\cite{sam_article1D}. The purpose of this article is to extend the stability proof of the IP and BR2 fluxes to two dimensional problems using linear triangular elements.

This article is composed as follows: Section~\ref{sec:independent kappa} provides a proof to show that the problem, employing the IP or the BR2 numerical schemes, is independent of the auxiliary correction field; Section~\ref{sec:IP section} contains the theoretical proof of stability for the IP numerical fluxes as well as numerical verifications; Section~\ref{sec:BR2 section} presents the proof of stability for the BR2 scheme; Section~\ref{sec:VN section} demonstrates a von Neumann analysis showing the maximum time step. We conclude this paper with Section~\ref{sec:L2 errors section} where we present the $L_{2}$-errors for a given problem.
Therefore this article establishes the necessary bounds for the IP and BR2 penalty terms to ensure stability for ESFR schemes.
As the theoretical result is based on the work of Williams et al.~\cite{williams_energy_2013}, we strongly advise the reader to review~\cite{williams_energy_2013}. The current article will use similar notations in an attempt to be as comprehensible as possible.
\section{Preliminaries}\label{sec:presentation FR simplex}

In this section, we present the flux reconstruction approach for two dimensional problems using triangular elements, first introduced by Castonguay et al.\cite{castonguay_new_2012}.

Let us consider the diffusion equation
\begin{equation}\label{eq:diffusion 2D}
\dfrac{\partial u}{\partial t}=b\Delta u,\,\,\,\left(x,y\right)\in\Omega,\,\,t\in\left[0,T\right],
\end{equation}
where $x$ and $y$ are the spatial coordinates of the physical domain $\Omega$, $b$ is the diffusion parameter, $T$ is the final time and $\Delta=\left(\frac{\partial^{2} }{\partial x^{2}}+\frac{\partial^{2} }{\partial y^{2}}\right)$ is the Laplacian operator. We write the partial differential equation~\eqref{eq:diffusion 2D} as a system of two first-order equations. We introduce the operator $\mathbf{\nabla}=\begin{pmatrix}
\dfrac{\partial}{\partial x}\vspace{0.2cm}\\
\dfrac{\partial}{\partial y}
\end{pmatrix}$,
\begin{subequations}
\begin{align}
\dfrac{\partial u}{\partial t}&=\mathbf{\nabla}\cdot\mathbf{f}\left(\mathbf{q}\right),\label{eq:main eq 2D}\\
\mathbf{q}&=\mathbf{\nabla} u,\label{eq:aux eq 2D}
\end{align}
\end{subequations}
where $\mathbf{f}$ is the flux, and $\mathbf{q}$ the auxiliary variable of the problem. For a pure diffusion problem, $\mathbf{f}\left(\mathbf{q}\right)=b\mathbf{q}$.  In this article, a bold lowercase letter, $\mathbf{a}$, denotes a vector and a bold capital letter, $\mathbf{A}$ indicates a matrix.

The physical domain, $\Omega$, can be decomposed into $N_{K}$ non-overlapping linear triangular elements and thus we apply the tessalation, $\mathcal{T}_{h}=\sum_{n=1}^{N_{K}}\Omega_{n}$, where $\Omega_{n}$ denotes the element $n$. Before proceeding further on the FR procedure, we first define the computational domain and the reference element $\Omega_{s}$. We arbitrarily, chose the equilateral element as our reference element. Choosing any other reference element such as the rectangular triangle does not impact our results.

\begin{figure}[H]
\centering
\includegraphics[width=12cm]{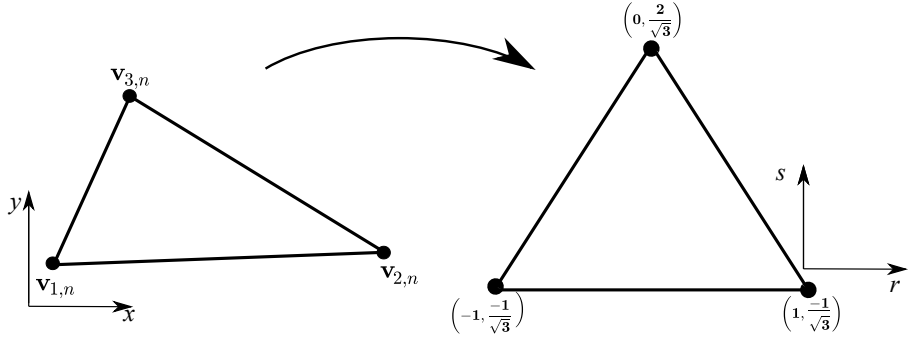}
\caption{Transformation, for triangles, from the physical space to the computational space.}
\label{fig:transformation 2D}
\end{figure}

Referring to Figure~\ref{fig:transformation 2D}, the affine mapping can be defined as
\begin{equation}\label{eq:affine mapping}
\begin{array}{lccl}
\mathcal{M}_{n}\colon &\Omega_{s}&\to& \Omega_{n}\\
&\left(r,s\right)&\mapsto& \dfrac{\left(-3r+2-\sqrt{3}s\right)}{6}\mathbf{v}_{1,n}+\dfrac{\left(2+3r-\sqrt{3}s\right)}{6}\mathbf{v}_{2,n}+\dfrac{\left(2+2\sqrt{3}s\right)}{6}\mathbf{v}_{3,n},
\end{array}
\end{equation}
where $\Omega_{s}=\left\lbrace\left(r,s\right)\mid -1\leq r\leq 1\, ,\, \frac{-1}{\sqrt{3}}\leq s\frac{2}{\sqrt{3}}\, ,\, \left|\frac{3}{\sqrt{3}}r+s\right|\leq \frac{2}{\sqrt{3}}\right\rbrace$, $\mathbf{v}_{1,n}$, $\mathbf{v}_{2,n}$ and $\mathbf{v}_{3,n}$ are the three vertices of the triangle $\Omega_{n}$.

With this mapping, we define $\hat{u}$, $\mathbf{\hat{f}}$ and $\mathbf{\hat{q}}$ as the computational quantities of $u$, $\mathbf{f}$ and $\mathbf{q}$. We then define the Jacobian of the triangle $\Omega_{n}$ and its determinant by $\mathbf{J}_{n}=\begin{pmatrix}
x_{r}&x_{s}\\
y_{r}&y_{s}
\end{pmatrix}$ and $\left|J_{n}\right|=Det\left(\mathbf{J_{n}}\right)$. We use the notation $a_{b}$ to indicate the partial derivative of $a$ with respect to $b$. The elements are linear triangles, therefore $\mathbf{J_{n}}$ is constant within $\Omega_{n}$. By convention, the computational quantites are calculated as
\begin{eqnarray}
\mathbf{\hat{\nabla}}&=&\mathbf{J}_{n}^{T}\mathbf{\nabla},\label{eq:nabla2D}\\
\hat{u}_{n}&=&\left|J_{n}\right| u_{n},\label{eq:u2D}\\
\mathbf{\hat{f}}_{n}&=&\left|J_{n}\right|\mathbf{J}_{n}^{-1}\mathbf{f}_{n},\label{eq:f2D}\\
\mathbf{\hat{q}}_{n}&=&\mathbf{\hat{\nabla}}\hat{u}=\left|J_{n}\right|\mathbf{J}_{n}^{T}\mathbf{q}_{n},\label{eq:q2D}\\
\mathbf{\hat{n}}^{e}&=&\dfrac{1}{\left|J_{n}\right|}\left|J^{e}\right|\mathbf{J}^{T}\mathbf{n}^{e},\label{eq:n2D}
\end{eqnarray}
where $\mathbf{n}^{e}$ is the outward normal to the edge $e$, $\left|J^{e}\right|$ is the length of the edge, and $\hat{\nabla}=\begin{pmatrix}
\frac{\partial}{\partial r}\\
\frac{\partial}{\partial s}
\end{pmatrix}$ is the divergence operator in the computational domain. We also note $\mathbf{r}=\left(r,s\right)$.

With these conventional transformations, we obtain
\begin{subequations}
\begin{align}
\dfrac{\partial \hat{u}}{\partial t}&=\hat{\mathbf{\nabla}}\cdot\hat{\mathbf{f}}\left(\hat{\mathbf{q}}\right),\label{eq:main compu eq 2D}\\
\hat{\mathbf{q}}&=\hat{\mathbf{\nabla}}u,\label{eq:aux compu eq 2D}
\end{align}
\end{subequations}
where, $\hat{\mathbf{f}}_{n}=\mathbf{J}_{n}^{-1}\mathbf{J}_{n}^{-T}\hat{\mathbf{q}}_{n}$.

The solution $u_{n}$ is a two-dimensional polynomial of degree $p$ $\left(u_{n}\in P_{p}\left(\Omega_{s}\right)\right)$. It is calculated through $N_{p}=\frac{\left(p+2\right)\left(p+1\right)}{2}$ solution points (SP), represented in Figure~\ref{fig:equilateral element},
\begin{equation}
u_{n}\left(\mathbf{r}\right)=\sum_{i=1}^{N_{p}}\tilde{u}_{i}l_{i}\left(\mathbf{r}\right),
\end{equation}
where $l_{i}$ is a two-dimensional Lagrange polynomial of degree $p$, $l_{i}\in P_{p}\left(\Omega_{s}\right)$ (note that the dimension of $P_{p}\left(\Omega_{s}\right)$ is $N_{p}$). $\left(\tilde{u}_{i}\right)_{i\in\llbracket 1,N_{p}\rrbracket}$ are the nodal expansion coefficients associated to the solution points $\left(r_{i}\right)_{i\in\llbracket 1,N_{p}\rrbracket}$ (interval of the type $\llbracket \cdot ,\cdot\rrbracket$ denotes an integer interval). Conversely to the 1D problem, there is no analytical expression for the Lagrange polynomial except for certain nodal distributions (e.g. equi-spaced). This issue is resolved by considering the solution in modal form
\begin{equation}\label{eq:modal form 2D}
u_{n}\left(\mathbf{r}\right)=\sum_{i=1}^{N_{p}}\tilde{u}^{m}_{i}L_{i}\left(\mathbf{r}\right),
\end{equation}
where $\tilde{u}^{m}_{i}$ are the modal coefficients and $\left(L_{i}\right)_{i\in\llbracket 1,N_{p}\rrbracket}$ forms an orthonormal basis on the equilateral reference element. Initially, at time $t_{0}$, we have the solution on element $\Omega_{n}$, $u_{n}^{t_{0}}$ at its $N_{p}$ solution points (the $N_{p}$ nodal coefficients). If we want to calculate the solution elsewhere, we evaluate the solution under its modal form. To do this, we compute the Vandermonde matrix~\cite{NDG}, $\mathcal{V}$, and perform the operation $\tilde{u}^{m}=\mathcal{V}^{-1}\tilde{u}$. Having the modal coefficients $\tilde{u}^{m}$ and the analytical expression for the orthonormal basis $L_{i}$, we can compute $u_{n}$ at any point with~\eqref{eq:modal form 2D}. The orthonormal basis $\left(L_{i}\right)$ is of utmost importance as it enables the evaluation of all quantities of the problem.

\begin{figure}[H]
\centering
\includegraphics[width=7cm]{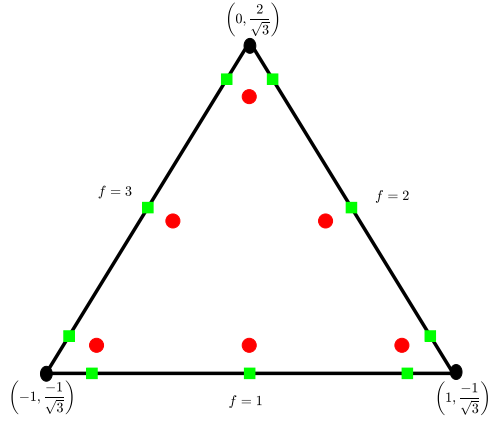}
\caption{Reference element for $p=2$, the green squares represent the Gauss-Legendre Flux Points (FP), the red circles represent the solution points (SP) and the black circles are the vertices.}
\label{fig:equilateral element}
\end{figure}

We now consider an element $\Omega_{n}$ of the domain and apply the FR procedure on equations \eqref{eq:main compu eq 2D} and \eqref{eq:aux compu eq 2D}. This results in a correction of the solution, within element $\Omega_{n}$, on each of its faces. For a given face, this correction is applied on $N_{fp}=p+1$ flux points, represented in Figure~\ref{fig:equilateral element}.

\begin{subequations}
\begin{align}
\dfrac{\partial \hat{u}_{n}}{\partial t}&=\hat{\mathbf{\nabla}}\cdot\hat{\mathbf{f}}_{n}\left(\hat{\mathbf{q}}_{n}\right)+\hat{\mathbf{\nabla}}\cdot\sum_{f=1}^{3}\sum_{j=1}^{N{fp}}\left[\left(\hat{\mathbf{f}}_{n,fj}^{*}-\hat{\mathbf{f}}_{n,fj}\right)\cdot\hat{\mathbf{n}}_{fj}\right]\mathbf{h}_{fj}\left(\mathbf{r}\right),\label{eq:main compu corrected eq 2D}\\
\hat{\mathbf{q}}_{n}&=\hat{\mathbf{\nabla}}\hat{u}_{n}+\sum_{f=1}^{3}\sum_{j=1}^{N_{fp}}\left(\hat{u}^{*}_{n,fj}-\hat{u}_{n,fj}\right)\psi_{fj}\left(\mathbf{r}\right)\hat{\mathbf{n}}_{fj},\label{eq:aux compu corrected eq 2D}
\end{align}
\end{subequations}
where $f$ denotes the faces of the element, $j$ is the index over the flux points, and lastly $u^{*}$ and $\mathbf{f}^{*}$ are the numerical fluxes. $\mathbf{h}_{fj}$ corresponds to the correction function vector of the primary equation associated to the Flux Point (FP) $\left(f,j\right)$. In addition $\psi_{fj}$ is the correction field associated to the correction function $\mathbf{g}_{fj}$ of the FP $\left(f,j\right)$. We have, for the primary equation, $\hat{\mathbf{\nabla}}\cdot \mathbf{h}_{fj}=\phi_{fj}$ and $\hat{\mathbf{\nabla}}\cdot \mathbf{g}_{fj}=\psi_{fj}$ for the auxiliary equation. The correction vectors are introduced to create continuous quantities across the edges and hence satisfy
\begin{equation}\label{eq:h dot n}
\mathbf{h}_{fj}\left(\mathbf{r}_{k}^{l}\right)\cdot \mathbf{\hat{n}}_{kl}=\delta_{fk}\delta_{jl},
\end{equation}
where $\delta$ is the Kronecker delta and $\mathbf{r}_{k}^{l}$ is the coordinates of the FP $\left(k,l\right)$. Hence, on the boundary of the reference element, the outward component of the correction function $\mathbf{h}_{fj}$ is equal to 1 on the FP $\left(f,j\right)$ and 0 on the others.

Moreover the correction functions and their divergence must satisfy additional properties to ensure stability of the advection problem. Full details of these properties can be found in the article of Castonguay et al.~\cite{castonguay_new_2012}. For brevity, we present just a few of them. We consider that the vector correction function $\mathbf{h}_{fj}$ associated to the FP $\left(f,j\right)$ lies in the Raviart-Thomas (RT) space of order $p$, $RT_{p}\left(\Omega_{s}\right)=\left(P_{p}\left(\Omega_{s}\right)\right)^{2}+\begin{pmatrix}
r\\
s
\end{pmatrix}P_{p}\left(\Omega_{s}\right)$.
As a result, two properties of $\mathbf{h}_{fj}$ have been demonstrated~\cite{castonguay_new_2012}, 
\begin{eqnarray}
\mathbf{h}_{fj}\cdot \mathbf{\hat{n}}_{\Gamma_{s}}&\in& R_{p}\left(\Gamma_{s}\right)\,\forall f\in\llbracket 1,3\rrbracket \, ,\forall j\in\llbracket 1,N_{fp}\rrbracket, \label{eq:prop1 h along edge polynomial p}\\
\hat{\mathbf{\nabla}}\cdot \mathbf{h}_{fj}&\in& P_{p}\left(\Omega_{s}\right)\,\forall f\in\llbracket 1,3\rrbracket \, ,\forall j\in\llbracket 1,N_{fp}\rrbracket,\label{eq:prop2 div h polynomial p}
\end{eqnarray}
where $\Gamma_{s}$ is the boundary of the reference element $\Omega_{s}$, $\mathbf{\hat{n}}_{\Gamma_{s}}$ is the normal of one of the faces of $\Gamma_{s}$ and $R_{p}\left(\Gamma_{s}\right)$ is defined as
\begin{equation}
R_{p}\left(\Gamma_{s}\right)=\left\lbrace\phi\mid\phi\in L^{2}\left(\Gamma_{s}\right),\,\left.\phi\right|_{\Gamma_{s,f}}\in P_{p}\left(\Gamma_{s,f}\right)\forall\, \Gamma_{s,f} \right\rbrace,
\end{equation}
where $\Gamma_{s,f}$ is one of the edges of $\Gamma_{s}$.

Equation~\eqref{eq:prop2 div h polynomial p} signifies that the divergence of the correction function associated to the FP $\left(f,j\right)$ is a two-dimensional polynomial of degree $p$ on the element while equation~\eqref{eq:prop1 h along edge polynomial p} expresses that the outward component of the correction function associated to FP $\left(f,j\right)$ along the edge of any face of the element is a 1D polynomial of degree $p$.

The following additional property is required to obtain a stable scheme: Castonguay et al.~\cite{castonguay_new_2012}, defined a class of correction functions for triangular elements, called ESFR, which is stable for the linear advection problem. This property was proposed such that additional terms, arising from the ESFR norm (defined later in equation~\eqref{eq:ESFR norm for U}), are removed. In the proof~\cite{castonguay_new_2012}, using this new norm, lower bounds are found for all the remaining terms and stability is obtained. The correction functions must satisfy, 
\begin{equation}\label{eq:ESFR property}
c\mathlarger{\sum}_{m=1}^{p+1}\binom{p}{m-1}\left(D^{\left(m,p\right)}L_{i}\right)\left(D^{\left(m,p\right)}\phi_{fj}\right)=\stretchint{5ex}_{\bs\bs\Omega_{s}}\mathbf{h}_{fj}\cdot\hat{\nabla}L_{i}\,\mbox{d}\Omega_{s},
\end{equation}
where $c$ is the correction function parameter and $D^{\left(m,p\right)}=\dfrac{\partial^{p}}{\partial r^{p-m+1}\partial s^{m-1}}$ is the derivative operator. We then define
\begin{equation}\label{eq: correction fonction decomposition}
\hat{\mathbf{\nabla}}\cdot \mathbf{h}_{fj}=\phi_{fj}\left(\mathbf{r}\right)=\sum_{i=1}^{N_{p}}\sigma_{fj,i}^{c}L_{i}\left(\mathbf{r}\right),
\end{equation}
where $\left(\sigma_{fj,i}^{c}\right)_{i\in\llbracket 1,N_{p}\rrbracket}$ are the coefficients of the correction field $\phi_{fj}$. If these coefficients respect the following equation, then $\phi_{fj}$ is an ESFR correction field and the linear advection problem is stable for triangles using the Lax-Friedrichs numerical flux.
Replacing equation~\eqref{eq: correction fonction decomposition} in equation~\eqref{eq:ESFR property} yields,
\begin{equation}\label{eq:ESFR formula}
c\sum_{k=1}^{N_{p}}\sigma_{fj,k}^{c}\sum_{m=1}^{p+1}\binom{p}{m-1}\left(D^{\left(m,p\right)}L_{i}\right)\left(D^{\left(m,p\right)}L_{k}\right)=-\sigma_{fj,i}^{c}+\stretchint{5ex}_{\bs\bs \Gamma_{s}}\left(\mathbf{h}_{fj}\cdot\hat{\mathbf{n}}\right)L_{i}\,\mbox{d}\Gamma_{s}\,\,\,\forall i\in\llbracket 1,N_{p}\rrbracket.
\end{equation}
Besides ensuring the stability of the advection problem, the above equation enables the correction fields to have mirror and rotational symmetry as explained in \cite{castonguay_new_2012}. In the following sections, both the primary ($\mathbf{h}_{fj}$) and the auxiliary ($\mathbf{g}_{fj}$) correction functions are taken such that they satisfy the above properties. The auxiliary correction field is parametrized by $\kappa$ and is defined through the coefficients $\left(\sigma_{fj,i}^{\kappa}\right)_{i\in\llbracket 1,N_{p}\rrbracket}$,

\begin{equation}\label{eq:ESFR formula kappa}
\kappa\sum_{k=1}^{N_{p}}\sigma_{fj,k}^{\kappa}\sum_{m=1}^{p+1}\binom{p}{m-1}\left(D^{\left(m,p\right)}L_{i}\right)\left(D^{\left(m,p\right)}L_{k}\right)=-\sigma_{fj,i}^{\kappa}+\stretchint{5ex}_{\bs\bs \Gamma_{s}}\left(\mathbf{g}_{fj}\cdot\hat{\mathbf{n}}\right)L_{i}\,\mbox{d}\Gamma_{s}\,\,\,\forall i\in\llbracket 1,N_{p}\rrbracket.
\end{equation}

Let us now enumerate the different choices of the ESFR method for the diffusion problem:
\begin{enumerate}
\item The correction field $\psi$ for the auxiliary equation is parametrized by the parameter $\kappa$ (Equation~\eqref{eq:ESFR formula kappa}). In the following section, we will show that when employing the IP or BR2 numerical fluxes, the method does not depend on $\kappa$. To verify this result, we will run numerical simulations for two values, $\kappa_{DG}$ and $\kappa_{+}$. These values will be further elaborated in the following section.
\item The correction field $\phi$ for the primary equation. This correction field is parametrized by $c$. Castonguay et al.~\cite{castonguay_new_2012} studied the influence of $c$ for the advection problem. He defined a particular value of $c$ noted as $c_{+}$, which yields the maximal stable time step.

\begin{table}[H]
\centering
\begin{tabular}{|c||c|c|}
\hline
\bf \backslashbox{$p$}{$\gamma$}&$60\degree$&$90\degree$\\
\hline
2&4.3e-02&4.3e-02\\
\hline
3&6.4e-04&6.0e-04\\
\hline
\end{tabular}
\caption{Numerical values for $c_{+}$ for different regular meshes and orders of polynomial $p$ obtained through a von Neumann analysis.}
\label{tab:c+ triangle}
\end{table}

The parameter $c_{+}$ given in Table~\ref{tab:c+ triangle} was obtained through a von Neumann analysis for $p=2$ and $p=3$ for an advection problem. $c_{+}$ also depends on the shape of the elements, and hence on $\gamma$, as explained in Section~\ref{sec:VN section}.

\item For each element, the solution $u$ is computed at $N_{p}$ solution points and $N_{fp}$ flux points. The stability proof requires the use of the Gauss-Legendre flux points in order to evaluate exactly the integrals over the edges. However there is no constraint on the solution points. For our numerical simulations, we chose, arbitrarily, the $\alpha$-optimized solution points proposed in the book of Hesthaven and Warburton \cite{NDG}.
\item Two numerical fluxes will be studied: the IP and BR2 schemes

\begin{equation}\label{eq:numerical fluxes}
\left\lbrace
\begin{aligned}
&\text{IP:}\\
&u^{*}=\left\lbrace\left\lbrace u\right\rbrace\right\rbrace  \\
&\mathbf{q}^{*}=\left\lbrace\left\lbrace\mathbf{\nabla} u\right\rbrace\right\rbrace -\tau \mathbf{\llbracket u\rrbracket}
\end{aligned}
\right.
\hspace{2cm}
\left\lbrace
\begin{aligned}
&\text{BR2:}\\
&u^{*}=\left\lbrace\left\lbrace u\right\rbrace\right\rbrace  \\
&\mathbf{q}^{*}=\left\lbrace\left\lbrace\mathbf{\nabla} u\right\rbrace\right\rbrace +s  \left\lbrace\left\lbrace \mathbf{r^{e}}\left(\mathbf{\llbracket u\rrbracket}\right)\right\rbrace\right\rbrace
\end{aligned}
\right.
\end{equation}
where $\tau$ is the penalty term for the IP scheme and $s$ is the penalty term for the BR2 scheme; while $\mathbf{r^{e}}$ is a lifting operator and is defined as
\begin{equation}\label{eq:re int def}
\displaystyle{
\int_{\Omega}\mathbf{r^{e}}\left(\mathbf{\llbracket u\rrbracket}\right)\cdot \bm{\Phi}\,\mbox{d}\Omega=-\int_{\Gamma_{e}}\mathbf{\llbracket u\rrbracket} \cdot \left\lbrace\left\lbrace \bm{\Phi}\right\rbrace\right\rbrace\,\mbox{d}\Gamma_{e},
}
\end{equation}
where $\bm{\Phi}$ is a vector test function and $\Gamma_{e}$ denotes the edge $e$. The notations $\mathbf{\llbracket\,\,\rrbracket}$ denotes the jump and $\{\{\,\,\}\}$ the mean value. While the former is the difference of the solution across an edge $e$, the latter is the average,
\begin{eqnarray}
\mathbf{\llbracket u\rrbracket}_{e} &=& u_{e,-}\mathbf{n}^{e}_{-}+u_{e,+}\mathbf{n}^{e}_{+},\\
\{\{ u \}\}_{e}&=&\dfrac{1}{2}\left(u_{e,+}+u_{e,-}\right),
\end{eqnarray} 
where $u_{e,-}$ and $\mathbf{n}_{-}^{e}$ denotes the interior solution and the outward interior normal vector of edge $e$, while $u_{e,+}$ and $\mathbf{n}_{+}^{e}$ signify the exterior solution and the outward exterior edge normal. Figure~\ref{fig:Part domain} provides the geometric interpretation of these quantities. Notice, while $\mathbf{\llbracket u \rrbracket}$ is a vector, $\llbracket q \rrbracket$ is a scalar value. Conversely, $\{\{u\}\}$ is a scalar value but $\mathbf{\{\{ \mathbf{q}\}\}}$ is a vector.
\end{enumerate}
\section{The diffusion equation, independent of $\kappa$}\label{sec:independent kappa}
The purpose of this section is to prove that using the ESFR schemes with the IP and BR2 numerical fluxes results in an independency of the problem from the parameter $\kappa$, associated with the correction function for the auxiliary equation.

From equation~\eqref{eq:aux compu corrected eq 2D}, the transformed solution correction at each flux point for both the IP and BR2 schemes as shown in equation~\eqref{eq:numerical fluxes}  yields,
\begin{equation}\label{eq:jump indep}
\begin{array}{lll}
\left(\hat{u}_{n,fj}^{*}-\hat{u}_{n,fj}\right)&=&\left\lbrace\left\lbrace \hat{u} \right\rbrace\right\rbrace_{fj}-\hat{u}_{fj}\\
&=&-\dfrac{\llbracket \hat{u} \rrbracket_{fj}}{2}.
\end{array}
\end{equation}
Note that $\llbracket \hat{u} \rrbracket_{fj}$ is a scalar value of the jump on face $f$ at the flux point $j$. 
However the IP and BR2 schemes differ only for the numerical flux function, $\hat{\mathbf{f}}^{*}$ for the primary equation as clearly stated in equation~\eqref{eq:numerical fluxes}. The schemes only depend on the mean of the solution gradient and the solution jump but do not depend on the auxiliary solution, $\hat{\mathbf{q}}$ and hence do not depend on the parameter $\kappa$.

By including the numerical flux into the transformed solution correction as shown in equation~\eqref{eq:jump indep} into equations~\eqref{eq:main compu corrected eq 2D} and~\eqref{eq:aux compu corrected eq 2D}, we can represent the ESFR scheme as,
 
\begin{subequations}
\begin{align}
\dfrac{\partial \hat{u}_{n}}{\partial t}&=\hat{\mathbf{\nabla}}\cdot\hat{\mathbf{f}}_{n}\left(\hat{\mathbf{q}}_{n}\right)+\hat{\mathbf{\nabla}}\cdot\mathlarger{\sum}_{f=1}^{3}\mathlarger{\sum}_{j=1}^{N_{fp}}\left[\left(\hat{\mathbf{f}}_{n,fj}^{*}-\hat{\mathbf{f}}_{n,fj}\right)\cdot\hat{\mathbf{n}}_{fj}\right]\mathbf{h}_{fj}\left(\mathbf{r}\right),\label{eq:main compu corrected eq 2D bis}\\
\hat{\mathbf{q}}_{n}&=\hat{\mathbf{\nabla}}\hat{u}_{n}-\mathlarger{\sum}_{e=1}^{3}\mathlarger{\sum}_{i=1}^{N_{fp}}\dfrac{\llbracket \hat{u} \rrbracket_{ei}}{2}\psi_{ei}\left(\mathbf{r}\right)\hat{\mathbf{n}}_{ei}.\label{eq:aux compu corrected eq 2D bis}
\end{align}
\end{subequations}
Note that we have renamed the indices $f$ and $j$ in equation~\eqref{eq:aux compu corrected eq 2D bis} by $e$ and $i$.

We now introduce the auxiliary equation~\eqref{eq:aux compu corrected eq 2D bis} into the primary~\eqref{eq:main compu corrected eq 2D bis},
\begin{equation}
\begin{array}{lll}
\dfrac{\partial \hat{u}_{n}}{\partial t}&=&\hat{\mathbf{\nabla}}\cdot\left(\mathbf{J}_{n}^{-1}\mathbf{J}_{n}^{-T}\hat{\nabla}\hat{u}_{n}\right)+\mathlarger{\sum}_{f=1}^{3}\mathlarger{\sum}_{j=1}^{N_{fp}}\left[\left(\hat{\mathbf{f}}_{n,fj}^{*}-\mathbf{J}_{n}^{-1}\mathbf{J}_{n}^{-T}\hat{\nabla}\hat{u}_{n,fj}\right)\cdot\hat{\mathbf{n}}_{fj}\right]\mathbf{\phi}_{fj}\left(\mathbf{r}\right)\\
&&-\hat{\mathbf{\nabla}}\cdot\left(\mathbf{J}_{n}^{-1}\mathbf{J}_{n}^{-T}\mathlarger{\sum}_{e=1}^{3}\mathlarger{\sum}_{i=1}^{N_{fp}}\dfrac{\llbracket \hat{u} \rrbracket_{ei}}{2}\psi_{ei}\left(\mathbf{r}\right)\hat{\mathbf{n}}_{ei}\right)\\
&&+\mathlarger{\sum}_{f=1}^{3}\mathlarger{\sum}_{j=1}^{N_{fp}}\left[\mathbf{J}_{n}^{-1}\mathbf{J}_{n}^{-T}\left(\mathlarger{\sum}_{e=1}^{3}\mathlarger{\sum}_{i=1}^{N_{fp}}\dfrac{\llbracket \hat{u} \rrbracket_{ei}}{2}\psi_{ei}\left(\mathbf{r}_{f}^{j}\right)\hat{\mathbf{n}}_{ei}\right)\cdot\mathbf{\hat{n}}_{fj}\right]\phi_{fj}\left(\mathbf{r}\right),
\end{array}
\end{equation}
where $\psi_{ei}\left(\mathbf{r}_{f}^{j}\right)$ is the correction function of the edge $e$ computed at the FP $\left(f,j\right)$. On further simplification, the primary equation can be represented as, 

\begin{equation}\label{eq:ini demo}
\begin{array}{lll}
\dfrac{\partial \hat{u}_{n}}{\partial t}&=&\hat{\mathbf{\nabla}}\cdot\left(\mathbf{J}_{n}^{-1}\mathbf{J}_{n}^{-T}\hat{\nabla}\hat{u}_{n}\right)+\mathlarger{\sum}_{f=1}^{3}\mathlarger{\sum}_{j=1}^{N_{fp}}\left[\left(\hat{\mathbf{f}}_{n,fj}^{*}-\mathbf{J}_{n}^{-1}\mathbf{J}_{n}^{-T}\hat{\nabla}\hat{u}_{n,fj}\right)\cdot\hat{\mathbf{n}}_{fj}\right]\mathbf{\phi}_{fj}\left(\mathbf{r}\right)\\
&&+\mathbf{J}_{n}^{-1}\mathbf{J}_{n}^{-T}\mathlarger{\sum}_{e=1}^{3}\mathlarger{\sum}_{i=1}^{N_{fp}}\left[\dfrac{\llbracket \hat{u} \rrbracket_{ei}}{2}\left(-\hat{\nabla}\psi_{ei}\left(\mathbf{r}\right)\cdot\hat{\mathbf{n}}_{ei}+\mathlarger{\sum}_{f=1}^{3}\mathlarger{\sum}_{j=1}^{N_{fp}}\psi_{ei}\left(\mathbf{r}_{f}^{j}\right)\left(\hat{\mathbf{n}}_{ei}\cdot\mathbf{\hat{n}}_{fj}\right)\phi_{fj}\left(\mathbf{r}\right)\right)\right].
\end{array}
\end{equation}

\begin{post}\label{pos:independency of kappa}
Let the solution of the diffusion equation be approximated by a polynomial of degree $p$ on the reference triangle (see Figure~\ref{fig:equilateral element} for an example where $p = 2$). Let $\psi_{ei}$ be the correction field associated to face $e$ at the flux point $i$ parametrized by $\kappa$ and $\mathbf{\phi}_{fj}$ the correction field, associated to face $f$ at the flux point $j$, parametrized by $c$. Let $\left(\mathbf{r}_{f}^{j}\right)_{j\in\llbracket1,N_{fp}\rrbracket}$ be the Gauss-Legendre flux points on face $f$. Then $\forall c\in\left[0,\infty\right[,$
\begin{equation}
R_{ei}\left(\mathbf{r}\right)=\left(-\hat{\nabla}\psi_{ei}\left(\mathbf{r}\right)\cdot\hat{\mathbf{n}}_{ei}+\mathlarger{\sum}_{f=1}^{3}\mathlarger{\sum}_{j=1}^{N_{fp}}\psi_{ei}\left(\mathbf{r}_{f}^{j}\right)\left(\hat{\mathbf{n}}_{ei}\cdot\mathbf{\hat{n}}_{fj}\right)\phi_{fj}\left(\mathbf{r}\right)\right),
\end{equation}
is independent of the parameter $\kappa$.
\end{post}

This two-dimensional postulate was proposed based on the one-dimensional theoretical work~\cite{sam_article1D}. An analytical proof is presented in Appendix~\ref{sec:appendix independency kappa} for $p=1$; however, a general proof, for higher $p$, currently eludes the authors since specific properties of the orthonormal basis $\left(L_{i}\right)_{i\in\llbracket 1,N_{p}\rrbracket}$ are required. This Postulate~\ref{pos:independency of kappa} has been, nonetheless, verified numerically for every face $e$ and flux point $i$, for several values of $c$ and for an order up to $p=6$. For brevity, we will present only the result for the function,
\begin{equation}
R_{11}=\left(-\hat{\nabla}\psi_{11}\left(\mathbf{r}\right)\cdot\hat{\mathbf{n}}_{11}+\mathlarger{\sum}_{f=1}^{3}\mathlarger{\sum}_{j=1}^{N_{fp}}\psi_{11}\left(\mathbf{r}_{f}^{j}\right)\left(\hat{\mathbf{n}}_{11}\cdot\mathbf{\hat{n}}_{fj}\right)\phi_{fj}\left(\mathbf{r}\right)\right).
\end{equation}

Two values of $p$ will be taken, $p=2$ and $p=3$. The correction function $\mathbf{h}_{fj}$ will be computed for two values of $c$: $c_{DG}$ and $c_{+}$. In each of the following graphs, $R_{11}\left(\mathbf{r}\right)$ will be evaluated for all $N_{p}$ solution points.

\begin{figure}[H] 
\centering

\begin{subfigure}{0.49\textwidth}
\centering
\includegraphics[width=\linewidth]{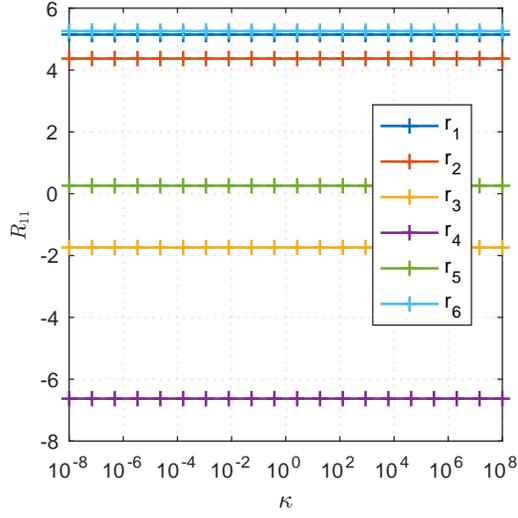}
\caption{$R_{11}$ for $c=c_{DG}$ and $p=2$.} \label{fig:dtmax DG p2}
\end{subfigure}\hspace*{\fill}
\begin{subfigure}{0.49\textwidth}
\centering
\includegraphics[width=\linewidth]{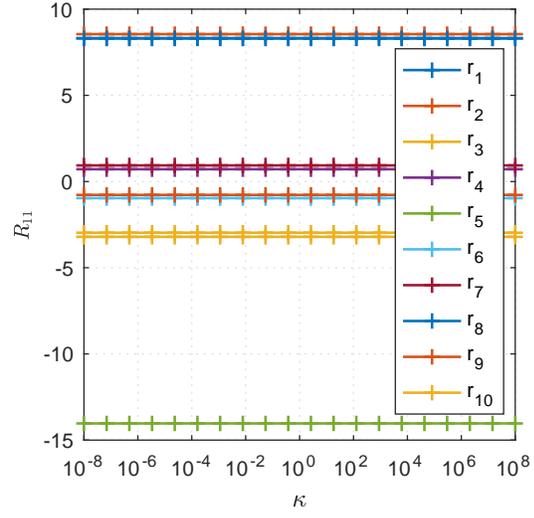}
\caption{$R_{11}$ for $c=c_{DG}$ $p=3$.} \label{fig:dtmax DG p3}
\end{subfigure}

\medskip
\begin{subfigure}{0.49\textwidth}
\centering
\includegraphics[width=\linewidth]{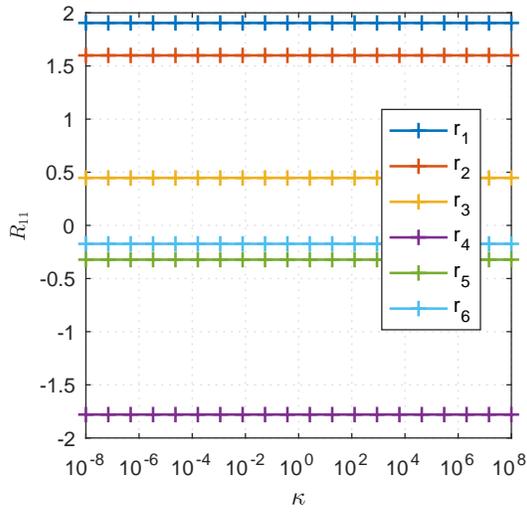}
\caption{$R_{11}$ for $c=c_{+}$ $p=2$.} \label{fig:dtmax c+ p2}
\end{subfigure}\hspace*{\fill}
\begin{subfigure}{0.49\textwidth}
\centering
\includegraphics[width=\linewidth]{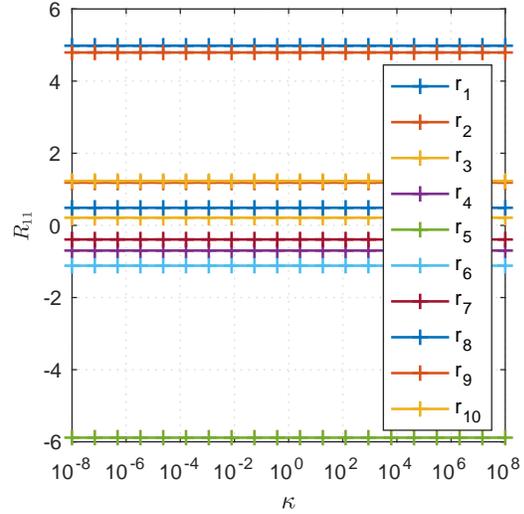}
\caption{$R_{11}$ for $c=c_{+}$ $p=3$.} \label{fig:dtmax c+ p3}
\end{subfigure}
\caption{Influence of parameter $\kappa$ on function $R_{11}$ for various values of parameter $c$ and polynomial order $p$.}
\label{fig:plot dt max}
\end{figure}

\begin{post}\label{pos:independent}
The diffusion equation is independent of $\kappa$ when employing the IP or BR2 numerical fluxes.
\end{post}
\begin{proof}
The diffusion equation with the ESFR schemes and employing the IP or BR2 numerical fluxes is written as equation~\eqref{eq:ini demo}. Only the last line of this discretization contains the parameter $\kappa$. However from Postulate~\ref{pos:independency of kappa}, it does not depend on $\kappa$.
\end{proof}
\section{IP stability condition}\label{sec:IP section}

\subsection{Theoretical result}\label{sec:theoretical proof IP triangle}

Having introduced the 2D FR concept, we can now derive the stability condition for the IP scheme using the ESFR class functions. This proof has been derived by Williams et al.~\cite{williams_energy_2013} for the LDG scheme using linear triangles. For brevity, we will not re-derive all the steps but instead begin from Equation (81) of the aforementioned article. Upon removing the advective term, we obtain:
\begin{equation}\label{eq:equation to get ESFR stability}
\dfrac{1}{2}\dfrac{\mbox{d}}{\mbox{d}t}\Vert U\Vert_{p,c}^{2}=-b\Vert \mathbf{Q}\Vert_{p,\kappa}^{2}+b\Theta_{dif},
\end{equation}
where,
\begin{eqnarray}
&\Vert U\Vert_{p,c}&=\left\lbrace\sum_{n=1}^{N_{K}}\stretchint{5ex}_{\bs\bs \Omega_{n}}\left[\left(u_{n}\right)^{2}+\dfrac{1}{A_{s}}\sum_{m=1}^{p+1}c_{m}\left(D^{\left(m,p\right)}u_{n}\right)^{2}\right]\mbox{d}\Omega_{n}\right\rbrace^{1/2},\label{eq:ESFR norm for U}\\
&\Vert \mathbf{Q}\Vert_{p,\kappa}&=\left\lbrace\sum_{n=1}^{N_{K}}\stretchint{5ex}_{\bs\bs \Omega_{n}}\left[\left(\mathbf{q}_{n}\right)^{2}+\dfrac{1}{A_{s}}\sum_{m=1}^{p+1}\kappa_{m}\left(D^{\left(m,p\right)}\mathbf{q}_{n}\right)^{2}\right]\mbox{d}\Omega_{n}\right\rbrace^{1/2},
\end{eqnarray}
are broken Sobolev-type norms for the solution and the auxiliary variables, and
\begin{equation}
\Theta_{dif}=\sum_{n=1}^{N_{K}}\left\lbrace\stretchint{5ex}_{\bs\bs\Gamma_{n}}\left[-u_{n}\left(\mathbf{q}_{n}\cdot \mathbf{n}\right)+u_{n}\left(\mathbf{q}_{n}^{*}\cdot\mathbf{n}\right)+u_{n}^{*}\left(\mathbf{q}_{n}\cdot\mathbf{n}\right)\right]\mbox{d}\Gamma_{n}\right\rbrace,\label{eq:theta dif ini}
\end{equation}
represents contributions of the diffusive fluxes from the boundaries. The coefficients $c_{m}=c\binom{p}{m-1}$, $\kappa_{m}=\kappa\binom{p}{m-1}$, $A_{s}$ denotes the area of the reference equilateral triangle, and $\Gamma_{n}$ defines all the edges of an element $\Omega_{n}$. In the previous equation, the positive scalar value $b$ multiplies every term of $\Theta_{dif}$ and is hence factored out. This is a slight difference from~\cite{williams_energy_2013}, where $b$ does not multiply the penalty term. This choice does not impact our result as it is just a nondimensionalization of the penalty term. To ensure energy stability, we must have the right hand side of equation~\eqref{eq:equation to get ESFR stability} to be less than equal to zero, where $\left(-b\Vert \mathbf{Q}\Vert_{p,\kappa}^{2}\right)$ is ensured to be negative granted that the coefficient $b$ is positive. Hence we must demonstrate that $\Theta_{dif}$ is non-positive for appropriate choices of numerical fluxes and interface solutions.

\begin{figure}[H]
\centering
\includegraphics[width=12cm]{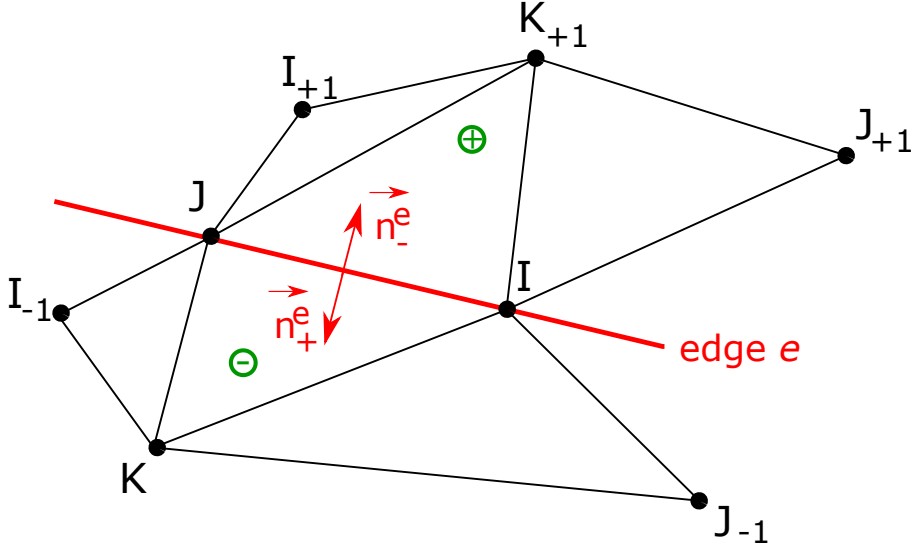}
\caption{Part of an arbitrary physical domain.}
\label{fig:Part domain}
\end{figure}

For the rest of our proof, we consider the edge $e$ represented in Figure~\ref{fig:Part domain}. The triangle $IJK$ is the interior element of edge $e$ and triangle $IJK_{+1}$, the exterior element of edge $e$.

\begin{lem}\label{lem:theta dif}
Employing the IP scheme and considering periodic boundary conditions, the following equation holds
\normalfont
\begin{equation}\label{eq:theta dif lem}
\displaystyle{
\Theta_{dif}=\sum_{e=1}^{N_{e}}\int_{\Gamma_{e}}\left(-\{\{\mathbf{q}-\mathbf{\nabla} u\}\}\right)\cdot \mathbf{\llbracket u \rrbracket}_{e} -\tau\mathbf{\llbracket u\rrbracket}_{e}\cdot\mathbf{\llbracket u\rrbracket}_{e}\,\mbox{d}\Gamma_{e}.
}
\end{equation}
\end{lem}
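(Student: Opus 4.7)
The plan is to rewrite the element-wise surface integrals in~\eqref{eq:theta dif ini} as a sum over edges and then substitute the explicit IP fluxes. Under periodic boundary conditions every edge $\Gamma_e$ is shared by exactly two adjacent elements; I will distinguish their traces with subscripts $-$ (interior) and $+$ (exterior), with outward normals $\mathbf{n}_-=-\mathbf{n}_+$. Since the numerical fluxes $u^*$ and $\mathbf{q}^*$ are single-valued on $\Gamma_e$, collecting the contributions of both incident elements yields, for each edge, the integrand
\[T_e=-\bigl(u_-(\mathbf{q}_-\cdot\mathbf{n}_-)+u_+(\mathbf{q}_+\cdot\mathbf{n}_+)\bigr)+\mathbf{q}^*\cdot(u_-\mathbf{n}_-+u_+\mathbf{n}_+)+u^*(\mathbf{q}_-\cdot\mathbf{n}_-+\mathbf{q}_+\cdot\mathbf{n}_+).\]

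Next I would invoke the standard DG trace identity
\[u_-(\mathbf{q}_-\cdot\mathbf{n}_-)+u_+(\mathbf{q}_+\cdot\mathbf{n}_+)=\{\{u\}\}\,\llbracket\mathbf{q}\rrbracket+\{\{\mathbf{q}\}\}\cdot\llbracket u\rrbracket,\]
which follows by direct expansion using $\mathbf{n}_-=-\mathbf{n}_+$ together with the jump/average definitions $\llbracket u\rrbracket=u_-\mathbf{n}_-+u_+\mathbf{n}_+$ and $\llbracket\mathbf{q}\rrbracket=\mathbf{q}_-\cdot\mathbf{n}_-+\mathbf{q}_+\cdot\mathbf{n}_+$ already introduced in Section~\ref{sec:presentation FR simplex}. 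Substituting this identity and recognizing the other two trace quantities directly, the edge integrand collapses to
\[T_e=-\{\{u\}\}\,\llbracket\mathbf{q}\rrbracket-\{\{\mathbf{q}\}\}\cdot\llbracket u\rrbracket+\mathbf{q}^*\cdot\llbracket u\rrbracket+u^*\,\llbracket\mathbf{q}\rrbracket.\]

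Finally, I would plug in the IP definitions $u^*=\{\{u\}\}$ and $\mathbf{q}^*=\{\{\nabla u\}\}-\tau\llbracket u\rrbracket$. The two scalar contributions involving $\llbracket\mathbf{q}\rrbracket$ cancel exactly, while the remaining vector terms regroup as
\[T_e=\bigl(-\{\{\mathbf{q}-\nabla u\}\}\bigr)\cdot\llbracket u\rrbracket-\tau\,\llbracket u\rrbracket\cdot\llbracket u\rrbracket,\]
which, summed over the $N_e$ edges, is precisely~\eqref{eq:theta dif lem}. The only delicate step is verifying the trace identity and confirming that periodicity eliminates any surviving boundary contribution; both are bookkeeping arguments with no analytical subtlety, so the main obstacle I anticipate is simply keeping consistent sign conventions for the two competing outward normals on a shared edge.
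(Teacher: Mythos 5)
Your proposal is correct and follows essentially the same route as the paper: convert the element-based surface integral to an edge-based sum using periodicity, substitute the IP fluxes, and simplify via jump/average algebra. The only cosmetic difference is that you package the simplification as the standard trace identity $u_-(\mathbf{q}_-\cdot\mathbf{n}_-)+u_+(\mathbf{q}_+\cdot\mathbf{n}_+)=\{\{u\}\}\,\llbracket \mathbf{q}\rrbracket+\{\{\mathbf{q}\}\}\cdot\mathbf{\llbracket u\rrbracket}$, whereas the paper carries out that same expansion by hand.
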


\begin{proof}
Instead of considering an element based formulation, we modify $\Theta_{dif}$ in~\eqref{eq:theta dif ini} to an edge based formulation,
\begin{equation}\label{eq:theta dif expand}
\Theta_{dif}=\displaystyle{
\sum_{e=1}^{N_{e}}\int_{\Gamma_{e}}\left[-u_{n}\left(\mathbf{q}_{n}\cdot\mathbf{n}\right)+u_{n+1}\left(\mathbf{q}_{n+1}\cdot\mathbf{n}\right)+\mathbf{q}_{e}^{*}\cdot\mathbf{n}\left(u_{n}-u_{n+1}\right)+u_{e}^{*}\left(\mathbf{q}_{n}-\mathbf{q}_{n+1}\right)\cdot\mathbf{n}\right]\mbox{d}\Gamma_{e},
}
\end{equation}
where $\mathbf{n}=\mathbf{n}^{e}_{-}$ as shown in Figure~\ref{fig:Part domain}.

We then use the IP numerical fluxes for both $u^*$ and $\mathbf{q}^*$ as defined in~\eqref{eq:numerical fluxes} to expand~\eqref{eq:theta dif expand}
\begin{equation}
\begin{array}{ccll}
\Theta_{dif}&=&
\mathlarger{\sum}_{e=1}^{N_{e}}\stretchint{5ex}_{\bs \bs\Gamma_{e}}&\left(-u_{n} \mathbf{q}_{n}+u_{n+1}\mathbf{q}_{n+1}\right)\cdot\mathbf{n}+\left(\{\{\mathbf{\nabla} u \}\} -\tau\mathbf{\llbracket u \rrbracket}_{e}\right)\cdot\mathbf{\llbracket u\rrbracket}_{e}+\{\{u\}\}\llbracket q\rrbracket \mbox{d}\Gamma_{e},\\
&=&\mathlarger{\sum}_{e=1}^{N_{e}}\stretchint{5ex}_{\bs \bs \Gamma_{e}}&\left[-u_{n}\mathbf{q}_{n}\cdot\mathbf{n}\left(1-\frac{1}{2}\right)\right.\\
&&&+u_{n+1}\mathbf{q}_{n+1}\cdot\mathbf{n}\left(1-\frac{1}{2}\right)\vspace{0.2cm}\\
&&&+\frac{1}{2}\left(-u_{n}\mathbf{q}_{n+1}+u_{n+1}\mathbf{q}_{n}\right)\cdot\mathbf{n}\vspace{0.2cm}\\
&&&\left.+\left(\{\{\nabla u \}\} -\tau\mathbf{\llbracket u \rrbracket}_{e}\right)\cdot\mathbf{\llbracket u\rrbracket}_{e} \right]\mbox{d}\Gamma_{e},\\
&=&\mathlarger{\sum}_{e=1}^{N_{e}}\stretchint{5ex}_{\bs \bs \Gamma_{e}}&\dfrac{1}{2}\left[\left(u_{n+1}-u_{n}\right)\left(\mathbf{q}_{n}+\mathbf{q}_{n+1}\right)\cdot\mathbf{n}\right]+\left(\{\{\mathbf{\nabla} u \}\} -\tau\mathbf{\llbracket u \rrbracket}_{e}\right)\cdot\mathbf{\llbracket u\rrbracket}_{e} \mbox{d}\Gamma_{e}\\
&=&\mathlarger{\sum}_{e=1}^{N_{e}}\stretchint{5ex}_{\bs \bs \Gamma_{e}}& -\{\{\mathbf{q}\}\}\cdot\mathbf{\llbracket u\rrbracket}_{e}+\left(\{\{\nabla u \}\} -\tau\mathbf{\llbracket u \rrbracket}_{e}\right)\cdot\mathbf{\llbracket u\rrbracket}_{e} \mbox{d}\Gamma_{e}.\\
\end{array}
\end{equation}
We finally obtain the desired equation
\begin{equation}
\displaystyle{
\Theta_{dif}=\sum_{e=1}^{N_{e}}\int_{\Gamma_{e}}\left(-\{\{\mathbf{q}-\mathbf{\nabla} u\}\}\right)\cdot \mathbf{\llbracket u \rrbracket}_{e} -\tau\mathbf{\llbracket u\rrbracket}_{e}\cdot\mathbf{\llbracket u\rrbracket}_{e}\,\mbox{d}\Gamma_{e}.
}
\end{equation}
\end{proof}
\begin{rmk}
Notice that if we had taken the LDG scheme then $\mathbf{\nabla}u$ would have been replaced by $\mathbf{q}$ and we would have had the stability of the scheme for $\tau\geq 0$ as shown by~\cite{williams_energy_2013}.
\end{rmk}

However, for the IP and BR2 formulations, additional effort is required to demonstrate conditions on stability. We decompose the result of Lemma~\ref{lem:theta dif} into two components,
\begin{equation}\label{eq:decomposition Theta}
\displaystyle{
\Theta_{dif}=\sum_{e=1}^{N_{e}}\left[\underbrace{\int_{\Gamma_{e}}\left(-\{\{\mathbf{q}-\mathbf{\nabla} u\}\}\right)\cdot \mathbf{\llbracket u \rrbracket}_{e}\,\mbox{d}\Gamma_{e}}_{\Theta_{e,1}} -\underbrace{\int_{\Gamma_{e}}\tau\llbracket u\rrbracket^{2}_{e}}_{\Theta_{e,2}}\,\mbox{d}\Gamma_{e}\right],
}
\end{equation}
where $\llbracket u \rrbracket_{e}=\mathbf{\llbracket u \rrbracket}_{e}\cdot \mathbf{n}$. We now investigate separately $\Theta_{e,1}$ and $\Theta_{e,2}$.

\begin{lem}\label{lem:theta e1 expansion}
Using the auxiliary equation, defined in~\eqref{eq:aux compu corrected eq 2D} and the Gauss-Legendre quadratures, $\Theta_{e,1}$ can be computed exactly,
\begin{equation}\label{eq:theta 1 similar}
\begin{array}{lll}
\Theta_{e,1}&=&\left|J^{e}\right|\dfrac{F_{s,-}^{e}}{4}\mathlarger{\sum}_{i=1}^{N_{fp}}\left[\llbracket u\rrbracket_{ei}^{2}\omega_{i}\psi_{ei,-}\left(\mathbf{r}_{i}\right)+\llbracket u\rrbracket_{ei}\omega_{i}\left(\mathlarger{\sum}_{\substack{j=1\\ j\neq i}}^{N_{fp}}\llbracket u \rrbracket_{ej}\psi_{ej,-}\left(\mathbf{r}_{i}\right)\right)\right]\vspace*{0.2cm}\\
&&+\left|J^{e}\right|\dfrac{F_{s,+}^{e}}{4}\mathlarger{\sum}_{i=1}^{N_{fp}}\left[\llbracket u\rrbracket_{ei}^{2}\omega_{i}\psi_{ei,+}\left(\mathbf{r}_{i}\right)+\llbracket u\rrbracket_{ei}\omega_{i}\left(\mathlarger{\sum}_{\substack{j=1\\ j\neq i}}^{N_{fp}}\llbracket u \rrbracket_{ej}\psi_{ej,+}\left(\mathbf{r}_{i}\right)\right)\right]\vspace*{0.2cm}\\
&&+\left|J^{e}\right|\mathlarger{\sum}_{i=1}^{N_{fp}}\left[\llbracket u\rrbracket_{ei}\omega_{i}\left(\mathlarger{\sum}_{\substack{f=1\\f\neq e}}^{N_{fp}}\dfrac{F_{s,-}^{f}\left(\mathbf{n}\cdot\mathbf{n}_{f,-}\right)}{4}\mathlarger{\sum}_{j=1}^{N_{fp}}\llbracket u \rrbracket_{fj,-}\psi_{fj,-}\left(\mathbf{r}_{i}\right)\right)\right]\vspace*{0.2cm}\\
&&+\left|J^{e}\right|\mathlarger{\sum}_{i=1}^{N_{fp}}\left[\llbracket u\rrbracket_{ei}\omega_{i}\left(\mathlarger{\sum}_{\substack{f=1\\f\neq e}}^{N_{fp}}\dfrac{F_{s,+}^{f}\left(\mathbf{n}\cdot\mathbf{n}_{f,+}\right)}{4}\mathlarger{\sum}_{j=1}^{N_{fp}}\llbracket u \rrbracket_{fj,+}\psi_{fj,+}\left(\mathbf{r}_{i}\right)\right)\right],
\end{array}
\end{equation}
where $\mathbf{r}_{i}$ and $\omega_{i}$ are the Gauss-Legendre nodes and weights and $\left|J^{e}\right|$ is the length of edge $e$. $F_{s,-}^{f}$ (resp. $F_{s,+}^{f}$) is the ratio of the length of an edge $f$ over the determinant of the Jacobian of the interior (resp. exterior) element $\left(\dfrac{\left|J^{f}\right|}{\left|J_{e,\pm}\right|}\right)$.
\end{lem}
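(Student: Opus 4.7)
My plan is to substitute the FR representation of $\mathbf{q}-\nabla u$ (derived from the auxiliary equation) into $\Theta_{e,1}$, pull the resulting edge integral back to the reference segment, and exploit the fact that the integrand is a polynomial of degree at most $2p$, so the $N_{fp}=p+1$-node Gauss-Legendre rule reproduces it exactly.

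First, starting from the transformed auxiliary equation~\eqref{eq:aux compu corrected eq 2D} together with the IP/BR2 simplification~\eqref{eq:jump indep}, which gives $\hat{u}^{*}_{n,fj}-\hat{u}_{n,fj}=-\llbracket\hat{u}\rrbracket_{fj}/2$, I get the kernel identity $\hat{\mathbf{q}}_{n}-\hat{\nabla}\hat{u}_{n}=-\sum_{f=1}^{3}\sum_{j=1}^{N_{fp}}\tfrac{1}{2}\llbracket\hat{u}\rrbracket_{fj}\,\psi_{fj}(\mathbf{r})\,\hat{\mathbf{n}}_{fj}$. Pulling back to physical variables using~\eqref{eq:u2D}--\eqref{eq:n2D}, and using that $|J_n|$ is constant on a linear triangle so $\nabla u_n=|J_n|^{-1}\mathbf{J}_n^{-T}\hat{\nabla}\hat{u}_n$ and $\mathbf{J}_n^{-T}\hat{\mathbf{n}}_{fj}=(|J^f|/|J_n|)\mathbf{n}_{fj}$, produces a closed-form expression for $\mathbf{q}_n-\nabla u_n$ as a linear combination of the $\psi_{fj}(\mathbf{r})\mathbf{n}_{fj}$ whose coefficients are weighted solution jumps at flux points.

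Next, on edge $e$ I decompose $\{\{\mathbf{q}-\nabla u\}\}$ into its interior ($-$) and exterior ($+$) contributions, each itself a sum over the three faces of the corresponding element. Substituting into the definition of $\Theta_{e,1}$ in~\eqref{eq:decomposition Theta} partitions it into four groups: interior contributions from face $e$, exterior contributions from face $e$, and the analogous pair from the two other faces $f\ne e$ of each adjacent element. The geometric ratios $F^{f}_{s,\pm}=|J^f|/|J_{e,\pm}|$ arise naturally as the $|J^f|/|J_n|$ factors combine with the edge-length $|J^e|$ pulled out of the integral. For the $f\ne e$ contributions, the inner product of the correction-function vector direction $\mathbf{n}_{f,\pm}$ with the edge normal $\mathbf{n}$ leaves the surviving scalar $\mathbf{n}\cdot\mathbf{n}_{f,\pm}$.

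Then I convert the physical edge integral to an integral over the reference segment with Jacobian $|J^e|$. On that segment, both $\llbracket u\rrbracket_e$ and $\psi_{fj}|_{\Gamma_e}$ are, respectively by the polynomial representation of $u$ and by property~\eqref{eq:prop1 h along edge polynomial p}, one-dimensional polynomials of degree at most $p$. Their product has degree at most $2p$, and the $(p+1)$-node Gauss-Legendre rule evaluates it exactly. Applying this rule to each of the four groups produces the corresponding line in~\eqref{eq:theta 1 similar}; in the first two groups (face $e$ itself), the inner sum over $j$ at the quadrature node $\mathbf{r}_i$ is split into the diagonal term $j=i$ (which supplies the $\llbracket u\rrbracket_{ei}^2$ factor) and the off-diagonal terms $j\ne i$, exactly as displayed.

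The main obstacle is not the central substitution, which is mechanical, but the Jacobian and normal bookkeeping: one must carefully track how $|J_n|$, $|J^e|$, $|J^f|$, the Piola-type relation $\mathbf{J}^{-T}\hat{\mathbf{n}}$, and the passage between computational jumps $\llbracket\hat{u}\rrbracket$ and physical jumps $\llbracket u\rrbracket$ combine so that the final answer reads cleanly in terms of $F^{f}_{s,\pm}$, $\omega_i$, $|J^e|$ and physical jumps. A secondary but essential point is the degree count, which relies on~\eqref{eq:prop1 h along edge polynomial p} together with $u_n\in P_p(\Omega_s)$ to guarantee exactness of the $(p+1)$-point quadrature.
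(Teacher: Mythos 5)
Your proposal is correct and follows essentially the same route as the paper: substitute the corrected auxiliary equation~\eqref{eq:aux compu corrected eq 2D} into $\Theta_{e,1}$, track the Jacobian and normal factors so that the ratios $F^{f}_{s,\pm}$ and the scalars $\mathbf{n}\cdot\mathbf{n}_{f,\pm}$ emerge, split the face sums into $f=e$ versus $f\neq e$ (and $j=i$ versus $j\neq i$), and invoke exactness of the $(p+1)$-point Gauss--Legendre rule on a degree-$2p$ integrand. The only quibble is the citation for the degree bound on $\psi_{fj}$ along the edge: it follows from the divergence property~\eqref{eq:prop2 div h polynomial p} applied to $\mathbf{g}_{fj}$ (so $\psi_{fj}\in P_{p}\left(\Omega_{s}\right)$ and hence $\psi_{fj}\in R_{p}\left(\Gamma_{s}\right)$), not from~\eqref{eq:prop1 h along edge polynomial p}, which concerns the normal trace $\mathbf{h}_{fj}\cdot\mathbf{\hat{n}}_{\Gamma_{s}}$.
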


\begin{proof}
We use equation~\eqref{eq:aux compu corrected eq 2D} along with the transformation equalities from the physical space to the computational space, equations~\eqref{eq:nabla2D} and~\eqref{eq:q2D}. Instead of using element indices $n$, $n+1$, we use the subscript $+$ and $-$ as shown in Figure~\ref{fig:Part domain}. In the term $\Theta_{e,1}$ in equation~\eqref{eq:decomposition Theta}, we replace $\mathbf{\llbracket u \rrbracket}_{e}$ by $\llbracket u \rrbracket_{e}\mathbf{n}$,
\begin{equation}
\begin{array}{lll}
\Theta_{e,1}&=&\stretchint{5ex}_{\bs\bs \Gamma_{e}}-\dfrac{\llbracket u \rrbracket_{e}}{2}\left[\dfrac{1}{\left|J_{e,-}\right|}\mathbf{J}_{e,-}^{-T}\left(\hat{\mathbf{q}}_{e,-}-\hat{\mathbf{\nabla}}\hat{u}_{e,-}\right)+\dfrac{1}{\left|J_{e,+}\right|}\mathbf{J}_{e,+}^{-T}\left(\hat{\mathbf{q}}_{e,+}-\hat{\mathbf{\nabla}}\hat{u}_{e,+}\right)\right]\cdot\mathbf{n}\,\mbox{d}\Gamma_{e} \vspace*{0.3cm}\\
&=&\stretchint{5ex}_{\bs\bs \Gamma_{e}}-\dfrac{\llbracket u \rrbracket_{e}}{2}\left[\dfrac{1}{\left|J_{e,-}\right|}\mathbf{J}_{e,-}^{-T}\left(\mathlarger{\sum}_{f={1}}^{3}\mathlarger{\sum}_{j=1}^{N_{fp}}\hat{u}_{fj,-}^{c}\psi_{fj,-}\mathbf{\hat{n}}_{fj,-}\right)\right]\cdot\mathbf{n}\,\mbox{d}\Gamma_{e}\vspace*{0.3cm}\\
&+&\stretchint{5ex}_{\bs\bs \Gamma_{e}}-\dfrac{\llbracket u \rrbracket_{e}}{2}\left[\dfrac{1}{\left|J_{e,+}\right|}\mathbf{J}_{e,+}^{-T}\left(\mathlarger{\sum}_{f={1}}^{3}\mathlarger{\sum}_{j=1}^{N_{fp}}\hat{u}_{fj,+}^{c}\psi_{fj,+}\hat{\mathbf{n}}_{fj,+}\right)\right]\cdot\mathbf{n}\,\mbox{d}\Gamma_{e},
\end{array}
\end{equation}

where $\hat{u}_{fj}^{c}=\left(\hat{u}^{*}-\hat{u}\right)_{fj}$, is the transformed solution correction. Using~\eqref{eq:u2D} and~\eqref{eq:n2D}, 
\begin{equation}
\begin{array}{lll}
\Theta_{e,1}&=&\stretchint{5ex}_{\bs \bs \Gamma_{e}}-\dfrac{\llbracket u\rrbracket_{e}}{2}\left[\dfrac{1}{\left|J_{e,-}\right|}\left(\mathlarger{\sum}_{f={1}}^{3}\left| J^{f,-}\right|\mathlarger{\sum}_{j=1}^{N_{fp}}u_{fj,-}^{c}\psi_{fj,-}\mathbf{n}_{fj,-}\right)\right]\cdot\mathbf{n}\,\mbox{d}\Gamma_{e}\\
&&+\stretchint{5ex}_{\bs \bs \Gamma_{e}}-\dfrac{\llbracket u\rrbracket_{e}}{2}\left[\dfrac{1}{\left|J_{e,+}\right|}\left(\mathlarger{\sum}_{f=1}^{3}\left| J^{f,+}\right|\mathlarger{\sum}_{j=1}^{N_{fp}}u_{fj,+}^{c}\psi_{fj,+}\mathbf{n}_{fj,+}\right)\right]\cdot\mathbf{n}\,\mbox{d}\Gamma_{e}.
\end{array}
\end{equation}

Through a substitution of the numerical flux into the solution correction, we obtain $u_{fj}^{c}=\{\{ u\}\}_{fj}-u_{fj}=-\frac{1}{2}\llbracket u\rrbracket_{fj}$. Note that while $\llbracket u\rrbracket_{e}$ is a function of $\left(r,s\right)$, $\llbracket u\rrbracket_{fj}$ is a constant. We also define the parameter $F_{s}$ which is the ratio of the length of an edge over the determinant of the Jacobian of an element. Since we employ a straight sided triangle, we can drop the subscript $j$ for the normal of an edge. In the following equation, we expand on the summation over the three faces and separate the jump belonging to edge $e$, $\llbracket u\rrbracket_{ej}$ from the other jumps $\llbracket u\rrbracket_{fj}$,

\begin{equation}
\begin{array}{lll}
\Theta_{e,1}&=&\stretchint{5ex}_{\bs \bs \Gamma_{e}}\dfrac{\llbracket u\rrbracket_{e}}{4}\left[F_{s,-}^{e}\mathlarger{\sum}_{j=1}^{N_{fp}}\llbracket u \rrbracket_{ej}\psi_{ej,-}\right]\mbox{d}\Gamma_{e}\vspace*{0.3cm}\\
&&+\stretchint{5ex}_{\bs \bs \Gamma_{e}}\dfrac{\llbracket u\rrbracket_{e}}{4}\left[F_{s,+}^{e}\mathlarger{\sum}_{j=1}^{N_{fp}}\llbracket u \rrbracket_{ej}\psi_{ej,+} \right]\mbox{d}\Gamma_{e}\vspace*{0.3cm}\\
&&+\stretchint{5ex}_{\bs \bs \Gamma_{e}}\dfrac{\llbracket u\rrbracket_{e}}{4}\left[\mathlarger{\sum}_{\substack{f=1\\f\neq e}}F_{s,-}^{f}\mathlarger{\sum}_{j=1}^{N_{fp}}\llbracket u \rrbracket_{fj}\psi_{fj,-} \mathbf{n}_{f}\right]\cdot \mathbf{n}\,\mbox{d}\Gamma_{e}\vspace*{0.3cm}\\
&&+\stretchint{5ex}_{\bs \bs \Gamma_{e}}\dfrac{\llbracket u\rrbracket_{e}}{4}\left[\mathlarger{\sum}_{\substack{f=1\\f\neq e}}F_{s,+}^{f}\mathlarger{\sum}_{j=1}^{N_{fp}}\llbracket u \rrbracket_{fj}\psi_{fj,+} \mathbf{n}_{f}\right]\cdot \mathbf{n}\,\mbox{d}\Gamma_{e},
\end{array}
\end{equation}
where $\Theta_{e,1}$ can now be expressed as the sum of four terms. The next step is to transform each integral into the computational space, where each term integrates the product $\llbracket u \rrbracket_{e} \psi_{fj}$ over the computational edge $\Gamma_{s,e}$. Since $u_{n}\in P_{p}\left(\Omega_{s}\right)$, we have also $u_{n}\in R_{p}\left(\Gamma_{s}\right)$. Similarly, $\mathbf{\nabla}\cdot \mathbf{g}_{fj}=\psi_{fj}\in P_{p}\left(\Omega_{s}\right)$ and hence $\psi_{fj}\in R_{p}\left(\Gamma_{s}\right)$. Therefore the product $\llbracket u \rrbracket_{e} \psi_{fj}$ is a polynomial of degree less than or equal to $2p$. Using Gauss-Legendre quadratures, we can compute exactly the integrals of $\Theta_{e,1}$. We introduce $\mathbf{r}_{i}$ and $\omega_{i}$ as the Gauss-Legendre nodes and weights,

\begin{equation}
\begin{array}{lll}
\Theta_{e,1}&=&\left|J^{e}\right|\dfrac{F_{s,-}^{e}}{4}\mathlarger{\sum}_{i=1}^{N_{fp}}\left[\llbracket u\rrbracket_{ei}^{2}\omega_{i}\psi_{ei,-}\left(\mathbf{r}_{i}\right)+\llbracket u\rrbracket_{ei}\omega_{i}\left(\mathlarger{\sum}_{\substack{j=1\\ j\neq i}}^{N_{fp}}\llbracket u \rrbracket_{ej}\psi_{ej,-}\left(\mathbf{r}_{i}\right)\right)\right]\vspace*{0.2cm}\\
&&+\left|J^{e}\right|\dfrac{F_{s,+}^{e}}{4}\mathlarger{\sum}_{i=1}^{N_{fp}}\left[\llbracket u\rrbracket_{ei}^{2}\omega_{i}\psi_{ei,+}\left(\mathbf{r}_{i}\right)+\llbracket u\rrbracket_{ei}\omega_{i}\left(\mathlarger{\sum}_{\substack{j=1\\ j\neq i}}^{N_{fp}}\llbracket u \rrbracket_{ej}\psi_{ej,+}\left(\mathbf{r}_{i}\right)\right)\right]\vspace*{0.2cm}\\
&&+\left|J^{e}\right|\mathlarger{\sum}_{i=1}^{N_{fp}}\left[\llbracket u\rrbracket_{ei}\omega_{i}\left(\mathlarger{\sum}_{\substack{f=1\\f\neq e}}^{N_{fp}}\dfrac{F_{s,-}^{f}\left(\mathbf{n}\cdot\mathbf{n}_{f,-}\right)}{4}\mathlarger{\sum}_{j=1}^{N_{fp}}\llbracket u \rrbracket_{fj,-}\psi_{fj,-}\left(\mathbf{r}_{i}\right)\right)\right]\vspace*{0.2cm}\\
&&+\left|J^{e}\right|\mathlarger{\sum}_{i=1}^{N_{fp}}\left[\llbracket u\rrbracket_{ei}\omega_{i}\left(\mathlarger{\sum}_{\substack{f=1\\f\neq e}}^{N_{fp}}\dfrac{F_{s,+}^{f}\left(\mathbf{n}\cdot\mathbf{n}_{f,+}\right)}{4}\mathlarger{\sum}_{j=1}^{N_{fp}}\llbracket u \rrbracket_{fj,+}\psi_{fj,+}\left(\mathbf{r}_{i}\right)\right)\right],
\end{array}
\end{equation}
where the multiplicative scalar $\left|J^{e}\right|$ in front of each term comes from the transformation from the physical to the computational domain.
\end{proof}

Having completed the expansion of the first term, $\Theta_{e,1}$, we now turn our attention to the second term $\Theta_{e,2}$ from~\eqref{eq:decomposition Theta}. 

\begin{lem}\label{lem:theta e2 GL quad}
Using Gauss-Legendre quadratures, we can compute $\Theta_{e,2}$ exactly,
\begin{equation}\label{eq:theta e2 GL quad}
\Theta_{e,2}=-\left|J^{e}\right|\mathlarger{\sum}_{i=1}^{N_{fp}}\tau_{ei}\omega_{i}\llbracket u\rrbracket_{ei}^{2}.
\end{equation}
where $\left(\omega_{i}\right)_{i\in\llbracket 1,N_{fp}\rrbracket}$ and $\left(\mathbf{r}_{i}\right)_{i\in\llbracket 1,N_{fp}\rrbracket}$ are the Gauss-Legendre quadrature weights and points.
\end{lem}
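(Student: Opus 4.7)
The plan is to apply Gauss--Legendre quadrature directly after transforming the edge integral to the reference edge and performing a degree count on the integrand. First I would pull the minus sign out of the decomposition~\eqref{eq:decomposition Theta} to write
\begin{equation*}
\Theta_{e,2} = -\int_{\Gamma_{e}} \tau \,\llbracket u\rrbracket_{e}^{2}\,\mbox{d}\Gamma_{e},
\end{equation*}
and then push the integral to the reference edge $\Gamma_{s,e}$ through the affine mapping~\eqref{eq:affine mapping}. Because the element is straight-sided, the Jacobian on an edge is constant and the change of variables simply produces a multiplicative factor $|J^{e}|$ in front of the integral over $\Gamma_{s,e}$.

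Next I would carry out the polynomial degree accounting on the integrand. Since $u_{n}\in P_{p}\left(\Omega_{s}\right)$, its restriction to any edge of the reference triangle lives in $R_{p}\left(\Gamma_{s}\right)$, so along the edge $\Gamma_{s,e}$ both the interior trace $u_{e,-}$ and the exterior trace $u_{e,+}$ are one-dimensional polynomials of degree at most $p$. It follows that $\llbracket u\rrbracket_{e}$ is a polynomial of degree at most $p$ on $\Gamma_{s,e}$, and hence $\llbracket u\rrbracket_{e}^{2}$ is a polynomial of degree at most $2p$. The penalty $\tau$ is taken constant (or piecewise constant per flux point, in which case it is treated as the nodal value $\tau_{ei}$), so the full integrand retains degree $\leq 2p$.

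Then I would invoke the exactness property of the Gauss--Legendre rule: with $N_{fp}=p+1$ nodes, the rule integrates exactly every univariate polynomial of degree up to $2N_{fp}-1 = 2p+1$, which strictly covers the degree $2p$ integrand. Evaluating yields
\begin{equation*}
\int_{\Gamma_{s,e}} \tau\,\llbracket u\rrbracket_{e}^{2}\,\mbox{d}\Gamma_{s,e}
= \sum_{i=1}^{N_{fp}} \omega_{i}\,\tau_{ei}\,\llbracket u\rrbracket_{ei}^{2},
\end{equation*}
after which multiplying by the $-|J^{e}|$ arising from the change of variables and the sign in the decomposition produces the claimed identity~\eqref{eq:theta e2 GL quad}.

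The main obstacle here is not really an obstacle at all but a bookkeeping point: one must justify that Gauss--Legendre quadrature with exactly $N_{fp}=p+1$ nodes is sharp enough for an integrand of degree $2p$, which is precisely where the choice of Gauss--Legendre flux points (item~3 in the enumeration of preliminaries) pays off. Everything else is a direct change of variables, the use of the polynomial space $P_{p}\left(\Omega_{s}\right)$ in which the discrete solution lives, and a sign tracking between the decomposition of $\Theta_{dif}$ in~\eqref{eq:decomposition Theta} and the statement of the lemma.
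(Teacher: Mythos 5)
Your proposal is correct and follows essentially the same route as the paper's proof: a change of variables to the reference edge producing the factor $\left|J^{e}\right|$, the observation that $\llbracket u\rrbracket_{e}^{2}$ has degree at most $2p$ on the edge, and the exactness of the $N_{fp}=p+1$ point Gauss--Legendre rule for polynomials of degree up to $2p+1$. The paper states this more tersely, but the content is the same.
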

\begin{proof}
Since $\llbracket u\rrbracket_{e} ^{2}$ is a polynomial of degree $2p$ on $\Gamma_{s}$, we can evaluate it exactly with Gauss-Legendre quadratures. Thus the second term $\Theta_{e,2}$ can be written as,
\begin{equation}
\begin{array}{lll}
\Theta_{e,2}&=&-\stretchint{4ex}_{\bs\bs\Gamma_{e}}\tau \llbracket u\rrbracket ^{2} \mbox{d}\Gamma_{e},\\
&=&-\left|J^{e}\right|\mathlarger{\sum}_{i=1}^{N_{fp}}\tau_{ei}\omega_{i}\llbracket u\rrbracket_{ei}^{2}.
\end{array}
\end{equation}
\end{proof}

\begin{thm}\label{thm:tau IP}
Employing the IP scheme for the diffusion equation with affine triangular meshes, periodic boundary conditions and using the ESFR methods; for all edges $e$ and for all flux points $i$, $\tau_{ei}$ greater than $\tau_{ei}^{*}$ implies the energy stability of the solution, with
\begin{equation}\label{eq:criterion IP 2D triangle}
\begin{array}{lll}
\tau_{ei}^{*}&=&\dfrac{1}{4}\min\limits_{\kappa}\mathlarger{\sum}_{k}\Bigg[F_{s,k}^{e}\left(\psi_{ei,k}\left(\mathbf{r}_{i}^{e}\right)-\left|\psi_{ei,k}\left(\mathbf{r}_{i}^{e}\right)\right|\right)\\
&&+\mathlarger{\sum}_{f=1}^{3}\mathlarger{\sum}_{j=1}^{N_{fp}}\left(\dfrac{F_{s,k}^{f}\left|\mathbf{n}\cdot\mathbf{n}_{f}\right|}{2}\left(\left|\psi_{fj,k}\left(\mathbf{r}_{i}^{e}\right)\right|+\dfrac{\omega_{j}}{\omega_{i}}\left|\psi_{ei,k}\left(\mathbf{r}_{j}^{f}\right)\right|\right)\right)\Bigg],
\end{array}
\end{equation}
where $k=\left\lbrace -,+\right\rbrace$. $k=-$ signifies triangle $IJK$, and $k=+$ denotes the adjacent triangle $IJK_{+1}$.
\end{thm}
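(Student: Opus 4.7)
The plan is to start from equation~\eqref{eq:equation to get ESFR stability}, which reduces the question of energy stability to showing that $\Theta_{dif}\le 0$ under an appropriate condition on the penalty~$\tau$, since $b>0$ and $-b\Vert\mathbf{Q}\Vert_{p,\kappa}^{2}\le 0$. Using Lemma~\ref{lem:theta dif}, the decomposition~\eqref{eq:decomposition Theta}, and then substituting the exact quadrature expansions from Lemmas~\ref{lem:theta e1 expansion} and~\ref{lem:theta e2 GL quad}, the quantity $\Theta_{dif}$ becomes an explicit finite sum, over edges $e$, flux points $i$, and sides $k\in\{-,+\}$, of quadratic expressions in the interface jumps $\llbracket u\rrbracket_{ei}$, weighted by the Gauss--Legendre weights $\omega_{i}$, the geometric factors $|J^{e}|$ and $F_{s,k}^{f}$, and the values of the auxiliary correction fields $\psi_{fj,k}(\mathbf{r})$. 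The goal is to bound this sum above by an expression of the shape $\sum_{e,i}\omega_{i}|J^{e}|\llbracket u\rrbracket_{ei}^{2}\bigl(\tau_{ei}^{*}-\tau_{ei}\bigr)$; once such a bound is obtained, choosing $\tau_{ei}\ge\tau_{ei}^{*}$ makes every summand nonpositive and stability follows.

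The main tool to rearrange $\Theta_{e,1}$ is Young's inequality $|ab|\le\tfrac{1}{2}(a^{2}+b^{2})$, which I would apply to every cross-product $\llbracket u\rrbracket_{ei}\llbracket u\rrbracket_{fj}$ with $(f,j)\ne(e,i)$. A typical cross term $\omega_{i}|\psi_{fj,k}(\mathbf{r}_{i}^{e})|\,|\llbracket u\rrbracket_{ei}|\,|\llbracket u\rrbracket_{fj}|$ is bounded by $\tfrac{\omega_{i}}{2}|\psi_{fj,k}(\mathbf{r}_{i}^{e})|(\llbracket u\rrbracket_{ei}^{2}+\llbracket u\rrbracket_{fj}^{2})$. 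The half attached to $\llbracket u\rrbracket_{fj}^{2}$ is then reassigned to the flux point $(f,j)$ by swapping the dummy summation indices $(e,i)\leftrightarrow(f,j)$. This reallocation is the origin of the weighting factor $\omega_{j}/\omega_{i}$ appearing in~\eqref{eq:criterion IP 2D triangle}: the coefficient of $\omega_{i}\llbracket u\rrbracket_{ei}^{2}$ inherits both the original $\tfrac{1}{2}|\psi_{fj,k}(\mathbf{r}_{i}^{e})|$ and the swapped contribution $\tfrac{\omega_{j}}{2\omega_{i}}|\psi_{ei,k}(\mathbf{r}_{j}^{f})|$. The geometric prefactor $F_{s,k}^{f}$ carries through unchanged, while the signed dot product $\mathbf{n}\cdot\mathbf{n}_{f}$ is replaced by $|\mathbf{n}\cdot\mathbf{n}_{f}|$ upon taking absolute values, exactly producing the second summand of~\eqref{eq:criterion IP 2D triangle}.

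The diagonal contribution $(f,j)=(e,i)$ must be handled separately, since its sign information is useful. Its exact coefficient is $\tfrac{F_{s,k}^{e}}{4}\psi_{ei,k}(\mathbf{r}_{i}^{e})$. I would split $\psi_{ei,k}(\mathbf{r}_{i}^{e})=|\psi_{ei,k}(\mathbf{r}_{i}^{e})|+\bigl(\psi_{ei,k}(\mathbf{r}_{i}^{e})-|\psi_{ei,k}(\mathbf{r}_{i}^{e})|\bigr)$; the $|\psi|$ part is exactly what the Young bound would give for the self-pair $(f,j)=(e,i)$, so it is absorbed into the same cross-term sum, while the residual $\psi-|\psi|\le 0$ carries a favorable sign and supplies the first summand of~\eqref{eq:criterion IP 2D triangle}. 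Adding the contribution $-\omega_{i}|J^{e}|\tau_{ei}\llbracket u\rrbracket_{ei}^{2}$ from $\Theta_{e,2}$ and collecting every term, $\Theta_{dif}$ is bounded above by $\sum_{e,i}\omega_{i}|J^{e}|\llbracket u\rrbracket_{ei}^{2}\bigl(\tau_{ei}^{*}-\tau_{ei}\bigr)$ with $\tau_{ei}^{*}$ as stated. The minimization over $\kappa$ is legitimate because Postulate~\ref{pos:independent} shows the scheme itself is $\kappa$-independent, whereas the bound we just produced depends on $\kappa$ through $\psi$; taking the $\kappa$ that minimizes $\tau_{ei}^{*}$ therefore tightens the sufficient condition without altering the numerical solution.

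The hard part, in my view, will be the bookkeeping. A given jump $\llbracket u\rrbracket_{ei}$ is shared by the two triangles adjacent to edge $e$ and it appears in the expansion of $\Theta_{e,1}$ on its own edge as well as in the cross-edge pieces of $\Theta_{e',1}$ whenever $e'$ is another edge of one of those two triangles; all of those occurrences must be gathered before Young's inequality is applied so that the final coefficient of $\omega_{i}\llbracket u\rrbracket_{ei}^{2}$ correctly aggregates the contributions from both sides $k\in\{-,+\}$ and from every neighbouring edge. A closely related subtlety is that the Gauss--Legendre nodes are labeled intrinsically on each edge, so the swap $i\leftrightarrow j$ after Young must be carried out with a consistent edge-to-edge identification; this is precisely what forces the explicit double sum over $f$ and $j$ inside~\eqref{eq:criterion IP 2D triangle} rather than a single index, and it is where most of the proof's technical work will reside.
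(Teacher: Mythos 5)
Your proposal follows essentially the same route as the paper's proof: reduce energy stability to $\Theta_{dif}\le 0$, expand $\Theta_{e,1}$ and $\Theta_{e,2}$ via the exact Gauss--Legendre quadratures, apply Young's inequality to the cross terms with the index swap that produces the $\omega_{j}/\omega_{i}$ weights, gather the cross-edge contributions to each $\llbracket u\rrbracket_{ei}^{2}$, handle the signed diagonal term so that only $\psi_{ei,k}(\mathbf{r}_{i}^{e})-\left|\psi_{ei,k}(\mathbf{r}_{i}^{e})\right|$ survives, and invoke Postulate~\ref{pos:independent} to justify the minimization over $\kappa$. The steps you flag as the technical core (the reallocation of jump contributions across the four neighbouring edges and the consistent edge-to-edge node identification) are precisely the bookkeeping the paper carries out explicitly between equations~\eqref{eq:Theta e 2D} and~\eqref{eq:final expression theta dif,1}.
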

\begin{proof}
First, we employ the triangular inequality $\left(ab\leq \frac{1}{2}\left(a^{2}+b^{2}\right)\right)$ on all products of $\llbracket u \rrbracket _{ij}\llbracket u \rrbracket_{kl}$ of the result of Lemma~\ref{lem:theta e1 expansion},
\begin{equation}\label{eq:Theta e 2D ini}
\begin{array}{lll}
\Theta_{e,1}&\leq&\left|J^{e}\right|\dfrac{F_{s,-}^{e}}{4}\mathlarger{\sum}_{i=1}^{N_{fp}}\left[\llbracket u\rrbracket_{ei}^{2}\omega_{i}\psi_{ei,-}\left(\mathbf{r}_{i}\right)+\dfrac{1}{2}\left(\mathlarger{\sum}_{\substack{j=1\\ j\neq i}}^{N_{fp}}\omega_{i}\left|\psi_{ej,-}\left(\mathbf{r}_{i}\right)\right|\left(\llbracket u \rrbracket_{ei}^{2}+\llbracket u \rrbracket_{ej}^{2}\right)\right)\right]\vspace{0.2cm}\\
&&+\left|J^{e}\right|\dfrac{F_{s,+}^{e}}{4}\mathlarger{\sum}_{i=1}^{N_{fp}}\left[\llbracket u\rrbracket_{ei}^{2}\omega_{i}\psi_{ei,+}\left(\mathbf{r}_{i}\right)+\dfrac{1}{2}\left(\mathlarger{\sum}_{\substack{j=1\\ j\neq i}}^{N_{fp}}\omega_{i}\left|\psi_{ej,+}\left(\mathbf{r}_{i}\right)\right|\left(\llbracket u \rrbracket_{ei}^{2}+\llbracket u \rrbracket_{ej}^{2}\right)\right)\right]\vspace*{0.2cm}\\
&&+\left|J^{e}\right|\mathlarger{\sum}_{i=1}^{N_{fp}}\left[\llbracket u\rrbracket_{ei}^{2}\omega_{i}\mathlarger{\sum}_{\substack{f=1\\ f\neq e}}^{3}\mathlarger{\sum}_{j=1}^{N_{fp}}\dfrac{F_{s,-}^{f}\left|\mathbf{n}\cdot\mathbf{n}_{f,-}\right|}{8}\left|\psi_{fj,-}\left(\mathbf{r}_{i}\right)\right|\right]\vspace*{0.2cm}\\
&&+\left|J^{e}\right|\mathlarger{\sum}_{i=1}^{N_{fp}}\left[\llbracket u\rrbracket_{ei}^{2}\omega_{i}\mathlarger{\sum}_{\substack{f=1\\ f\neq e}}^{3}\mathlarger{\sum}_{j=1}^{N_{fp}}\dfrac{F_{s,+}^{f}\left|\mathbf{n}\cdot\mathbf{n}_{f,+}\right|}{8}\left|\psi_{fj,+}\left(\mathbf{r}_{i}\right)\right|\right]\vspace*{0.2cm}\\
&&+\left|J^{e}\right|\mathlarger{\sum}_{i=1}^{N_{fp}}\left[\omega_{i}\mathlarger{\sum}_{\substack{f=1\\ f\neq e}}^{3}\mathlarger{\sum}_{j=1}^{N_{fp}}\dfrac{F_{s,-}^{f}\left|\mathbf{n}\cdot\mathbf{n}_{f,-}\right|}{8}\llbracket u\rrbracket_{fj,-}^{2}\left|\psi_{fj,-}\left(\mathbf{r}_{i}\right)\right|\right]\vspace*{0.2cm}\\
&&+\left|J^{e}\right|\mathlarger{\sum}_{i=1}^{N_{fp}}\left[\omega_{i}\mathlarger{\sum}_{\substack{f=1\\ f\neq e}}^{3}\mathlarger{\sum}_{j=1}^{N_{fp}}\dfrac{F_{s,+}^{f}\left|\mathbf{n}\cdot\mathbf{n}_{f,+}\right|}{8}\llbracket u\rrbracket_{fj,+}^{2}\left|\psi_{fj,+}\left(\mathbf{r}_{i}\right)\right|\right].
\end{array}
\end{equation}

The terms $\mathlarger{\sum}_{i=1}^{N_{fp}}\mathlarger{\sum}_{\substack{j=1\\ j\neq i}}^{N_{fp}}\omega_{i}\left|\psi_{ej,-}\left(\mathbf{r}_{i}\right)\right|\llbracket u \rrbracket_{ej}^{2}$ and $\mathlarger{\sum}_{i=1}^{N_{fp}}\mathlarger{\sum}_{\substack{j=1\\ j\neq i}}^{N_{fp}}\omega_{i}\left|\psi_{ej,+}\left(\mathbf{r}_{i}\right)\right|\llbracket u \rrbracket_{ej}^{2}$ of the first two lines of the previous equation need further derivations.

\begin{equation}\label{eq:triang derivations ini}
\begin{array}{lll}
A&=&\mathlarger{\sum}_{i=1}^{N_{fp}}\left[\mathlarger{\sum}_{\substack{j=1\\ j\neq i}}^{N_{fp}}\omega_{i}\left|\psi_{ej,-}\left(\mathbf{r}_{i}\right)\right|\llbracket u \rrbracket_{ej}^{2}\right]\vspace{0.2cm}\\
&=&\mathlarger{\sum}_{i=1}^{N_{fp}}\left[\mathlarger{\sum}_{j=1}^{N_{fp}}\omega_{i}\left|\psi_{ej,-}\left(\mathbf{r}_{i}\right)\right|\llbracket u \rrbracket_{ej}^{2}\right]-\mathlarger{\sum}_{i=1}^{N_{fp}}\omega_{i}\left|\psi_{ei,-}\left(\mathbf{r}_{i}\right)\right|\llbracket u \rrbracket_{ei}^{2}\vspace{0.2cm}\\
&=&\mathlarger{\sum}_{j=1}^{N_{fp}}\left[\llbracket u \rrbracket_{ej}^{2}\mathlarger{\sum}_{i=1}^{N_{fp}}\omega_{i}\left|\psi_{ej,-}\left(\mathbf{r}_{i}\right)\right|\right]-\mathlarger{\sum}_{i=1}^{N_{fp}}\omega_{i}\left|\psi_{ei,-}\left(\mathbf{r}_{i}\right)\right|\llbracket u \rrbracket_{ei}^{2}.
\end{array}
\end{equation}
On the first term of $A$, renaming indices $j$ into $i$ and vice versa, we obtain
\begin{equation}\label{eq:triang derivations fin}
\begin{array}{lll}
A&=&\mathlarger{\sum}_{i=1}^{N_{fp}}\left[\llbracket u \rrbracket_{ei}^{2}\mathlarger{\sum}_{j=1}^{N_{fp}}\omega_{j}\left|\psi_{ei,-}\left(\mathbf{r}_{j}\right)\right|\right]-\mathlarger{\sum}_{i=1}^{N_{fp}}\omega_{i}\left|\psi_{ej,-}\left(\mathbf{r}_{i}\right)\right|\llbracket u \rrbracket_{ej}^{2}\vspace{0.2cm}\\
&=&\mathlarger{\sum}_{i=1}^{N_{fp}}\left[\llbracket u \rrbracket_{ei}^{2}\left(\mathlarger{\sum}_{j=1}^{N_{fp}}\left(\omega_{j}\left|\psi_{ei,-}\left(\mathbf{r}_{j}\right)\right|\right)-\omega_{i}\left|\psi_{ej,-}\left(\mathbf{r}_{i}\right)\right|\right)\right]\vspace{0.2cm}\\
&=&\mathlarger{\sum}_{i=1}^{N_{fp}}\llbracket u \rrbracket_{ei}^{2}\left(\mathlarger{\sum}_{\substack{j=1\\ j\neq i}}^{N_{fp}}\omega_{j}\left|\psi_{ei,-}\left(\mathbf{r}_{j}\right)\right|\right).
\end{array}
\end{equation}

Similar derivations are conducted for $\mathlarger{\sum}_{i=1}^{N_{fp}}\mathlarger{\sum}_{\substack{j=1\\ j\neq i}}^{N_{fp}}\omega_{i}\left|\psi_{ej,+}\left(\mathbf{r}_{i}\right)\right|\llbracket u \rrbracket_{ej}^{2}$. Including these terms into equation~\eqref{eq:Theta e 2D ini} yields,
\begin{equation}\label{eq:Theta e 2D}
\begin{array}{lll}
\Theta_{e,1}&\leq&\left|J^{e}\right|\dfrac{F_{s,-}^{e}}{4}\mathlarger{\sum}_{i=1}^{N_{fp}}\left[\llbracket u\rrbracket_{ei}^{2}\left(\omega_{i}\psi_{ei,-}\left(\mathbf{r}_{i}\right)+\dfrac{1}{2}\mathlarger{\sum}_{\substack{j=1\\ j\neq i}}^{N_{fp}}\omega_{i}\left|\psi_{ej,-}\left(\mathbf{r}_{i}\right)\right|+\omega_{j}\left|\psi_{ei,-}\left(\mathbf{r}_{j}\right)\right|\right)\right]\vspace{0.2cm}\\
&&+\left|J^{e}\right|\dfrac{F_{s,+}^{e}}{4}\mathlarger{\sum}_{i=1}^{N_{fp}}\left[\llbracket u\rrbracket_{ei}^{2}\left(\omega_{i}\psi_{ei,+}\left(\mathbf{r}_{i}\right)+\dfrac{1}{2}\mathlarger{\sum}_{\substack{j=1\\ j\neq i}}^{N_{fp}}\omega_{i}\left|\psi_{ej,+}\left(\mathbf{r}_{i}\right)\right|+\omega_{j}\left|\psi_{ei,+}\left(\mathbf{r}_{j}\right)\right|\right)\right]\vspace*{0.2cm}\\
&&+\left|J^{e}\right|\mathlarger{\sum}_{i=1}^{N_{fp}}\left[\llbracket u\rrbracket_{ei}^{2}\omega_{i}\mathlarger{\sum}_{\substack{f=1\\ f\neq e}}^{3}\mathlarger{\sum}_{j=1}^{N_{fp}}\dfrac{F_{s,-}^{f}\left|\mathbf{n}\cdot\mathbf{n}_{f,-}\right|}{8}\left|\psi_{fj,-}\left(\mathbf{r}_{i}\right)\right|\right]\vspace*{0.2cm}\\
&&+\left|J^{e}\right|\mathlarger{\sum}_{i=1}^{N_{fp}}\left[\llbracket u\rrbracket_{ei}^{2}\omega_{i}\mathlarger{\sum}_{\substack{f=1\\ f\neq e}}^{3}\mathlarger{\sum}_{j=1}^{N_{fp}}\dfrac{F_{s,+}^{f}\left|\mathbf{n}\cdot\mathbf{n}_{f,+}\right|}{8}\left|\psi_{fj,+}\left(\mathbf{r}_{i}\right)\right|\right]\vspace*{0.2cm}\\
&&+\left|J^{e}\right|\mathlarger{\sum}_{i=1}^{N_{fp}}\left[\omega_{i}\mathlarger{\sum}_{\substack{f=1\\ f\neq e}}^{3}\mathlarger{\sum}_{j=1}^{N_{fp}}\dfrac{F_{s,-}^{f}\left|\mathbf{n}\cdot\mathbf{n}_{f,-}\right|}{8}\llbracket u\rrbracket_{fj,-}^{2}\left|\psi_{fj,-}\left(\mathbf{r}_{i}\right)\right|\right]\vspace*{0.2cm}\\
&&+\left|J^{e}\right|\mathlarger{\sum}_{i=1}^{N_{fp}}\left[\omega_{i}\mathlarger{\sum}_{\substack{f=1\\ f\neq e}}^{3}\mathlarger{\sum}_{j=1}^{N_{fp}}\dfrac{F_{s,+}^{f}\left|\mathbf{n}\cdot\mathbf{n}_{f,+}\right|}{8}\llbracket u\rrbracket_{fj,+}^{2}\left|\psi_{fj,+}\left(\mathbf{r}_{i}\right)\right|\right].
\end{array}
\end{equation}

Notice that $\Theta_{e,1}$ is only associated to edge $e_{IJ}$ of $\Theta_{dif}$. However it contains the jump squared of the solution on the four faces of the two neighboring triangles that share edge, $e_{IJ}$.
\begin{figure}[H]
\centering
\includegraphics[width=10cm]{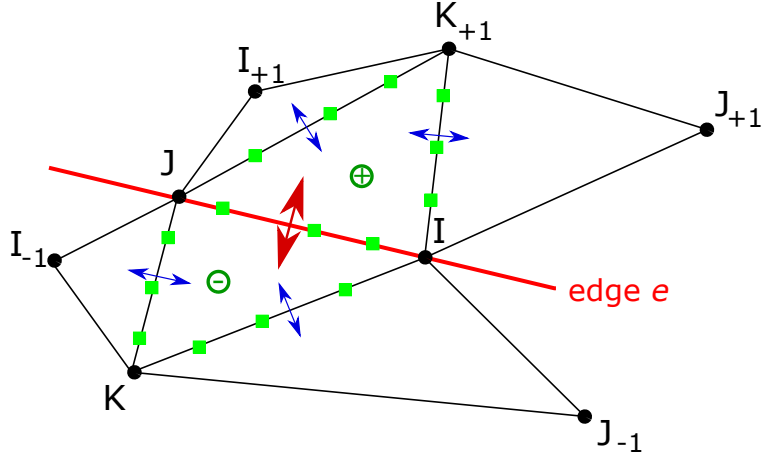}
\caption{Representation of the influence of $\Theta_{e,1}$ on the other edges ; the red arrow represents the jump of the solution on the edge $e$ while the blue arrows are the jumps on the neighboring edges; the green squares represent the flux points; in this example $p=2$.}
\label{fig:jump edges 2D}
\end{figure}

At this juncture we have only computed the value of $\Theta_{e,1}$ over the edge $e_{IJ}$. To simplify the final expression further, we require all the contributions to the jump over edge $e$. From Figure~\ref{fig:jump edges 2D} and equation~\eqref{eq:Theta e 2D}, we identify four additional contributions to the jump $\llbracket u\rrbracket^{2}_{e}$. These contributions are a result of the expansion of equation~\eqref{eq:Theta e 2D} of other edges that contain a contribution towards the edge $e_{IJ}$.
\begin{itemize}
\item $\Theta_{e_{IK}}$ has a contribution of $\left|J^{e}\right|\mathlarger{\sum}_{i=1}^{N_{fp}}\llbracket u\rrbracket_{ei}^{2}\left(\dfrac{F_{s,-}^{IK}\left|\mathbf{n}_{IJ}\cdot \mathbf{n}_{IK}\right|}{8}\mathlarger{\sum_{j=1}}^{N_{fp}}\omega_{j}\left|\psi_{ei,-}\left(\mathbf{r}_{j}^{IK}\right)\right|\right)$. \vspace{0.2cm}
\item $\Theta_{e_{KJ}}$ has a contribution of $\left|J^{e}\right|\mathlarger{\sum}_{i=1}^{N_{fp}}\llbracket u\rrbracket_{ei}^{2}\left(\dfrac{F_{s,-}^{KJ}\left|\mathbf{n}_{KJ}\cdot \mathbf{n}_{IK}\right|}{8}\mathlarger{\sum_{j=1}}^{N_{fp}}\omega_{j}\left|\psi_{ei,-}\left(\mathbf{r}_{j}^{KJ}\right)\right|\right)$.
\item $\Theta_{e_{JK_{+1}}}$ has a contribution of $\left|J^{e}\right|\mathlarger{\sum}_{i=1}^{N_{fp}}\llbracket u\rrbracket_{ei}^{2}\left(\dfrac{F_{s,+}^{JK_{+1}}\left|\mathbf{n}_{JK_{+1}}\cdot \mathbf{n}_{JK_{+1}}\right|}{8}\mathlarger{\sum_{j=1}}^{N_{fp}}\omega_{j}\left|\psi_{ei,+}\left(\mathbf{r}_{j}^{JK_{+1}}\right)\right|\right)$.
\item $\Theta_{e_{IK_{+1}}}$ has a contribution of $\left|J^{e}\right|\mathlarger{\sum}_{i=1}^{N_{fp}}\llbracket u\rrbracket_{ei}^{2}\left(\dfrac{F_{s,+}^{IK_{+1}}\left|\mathbf{n}_{IK_{+1}}\cdot \mathbf{n}_{IK_{+1}}\right|}{8}\mathlarger{\sum_{j=1}}^{N_{fp}}\omega_{j}\left|\psi_{ei,+}\left(\mathbf{r}_{j}^{IK_{+1}}\right)\right|\right)$.
\end{itemize}
Note that while in~\eqref{eq:Theta e 2D}, the correction fields were computed at the flux points of the edge itself, the correction fields of the above terms are calculated at the flux points of the surrounding edges of $e_{IJ}$. We indicate the flux point on the corresponding edges through the superscripts, i.e. $IK$, on the node location, $\mathbf{r}$. 

If we sum all the edges, we finally attain
\begin{equation}\label{eq:Theta dif 1}
\begin{array}{llll}
&\Theta_{dif,1}=\mathlarger{\sum}_{e=1}^{N_{e}}\Theta_{e,1}\\
&\hspace*{1.3cm}\leq\dfrac{1}{4}\mathlarger{\mathlarger{\sum}}_{e=1}^{N_{e}}\Bigg[\left|J^{e}\right|\mathlarger{\mathlarger{\sum}}_{i=1}^{N_{fp}}\Bigg[\llbracket u\rrbracket_{ei}^{2}\Bigg(F_{s,-}^{e}\omega_{i}\underbrace{\psi_{ei,-}\left(\mathbf{r}_{i}^{e}\right)}_{\text{Term A,-}}+F_{s,+}^{e}\omega_{i}\psi_{ei,+}\left(\mathbf{r}_{i}^{e}\right)\\
&+\dfrac{F_{s,-}^{e}}{2}\mathlarger{\sum}_{\substack{j=1\\j\neq i}}^{N_{fp}}\left(\omega_{i}\underbrace{\left|\psi_{ej,-}\left(\mathbf{r}_{i}^{e}\right)\right|}_{\text{Term B,-}}+\omega_{j}\underbrace{\left|\psi_{ei,-}\left(\mathbf{r}_{j}^{e}\right)\right|}_{\text{Term C,-}}\right)+\dfrac{F_{s,+}^{e}}{2}\mathlarger{\sum}_{\substack{j=1\\j\neq i}}^{N_{fp}}\left(\omega_{i}\left|\psi_{ej,+}\left(\mathbf{r}_{i}^{e}\right)\right|+\omega_{j}\left|\psi_{ei,+}\left(\mathbf{r}_{j}^{e}\right)\right|\right)\\
&+\omega_{i}\left[\mathlarger{\sum}_{\substack{f=1\\f\neq e}}^{3}\mathlarger{\sum}_{j=1}^{N_{fp}}\left(\dfrac{F_{s,-}^{f}\left|\mathbf{n}\cdot\mathbf{n}_{f,-}\right|}{2}\underbrace{\left|\psi_{fj,-}\left(\mathbf{r}_{i}^{e}\right)\right|}_{\text{Term D,-}}\right)+\mathlarger{\sum}_{\substack{f=1\\f\neq e}}^{3}\mathlarger{\sum}_{j=1}^{N_{fp}}\left(\dfrac{F_{s,+}^{f}\left|\mathbf{n}\cdot\mathbf{n}_{f,+}\right|}{2}\left|\psi_{fj,+}\left(\mathbf{r}_{i}^{e}\right)\right|\right)\right]\vspace*{0.3cm}\\
&+\mathlarger{\sum}_{\substack{f=1\\ f\neq e}}^{3}\left[\dfrac{F_{s,-}^{f}\left|\mathbf{n}\cdot\mathbf{n}_{f,-}\right|}{2}\mathlarger{\sum}_{j=1}^{N_{fp}}\left(\omega_{j}\underbrace{\left|\psi_{ei,-}\left(\mathbf{r}_{j}^{f}\right)\right|}_{\text{Term E,-}}\right)\right]+\mathlarger{\sum}_{\substack{f=1\\f\neq e}}^{3}\left[\dfrac{F_{s,+}^{f}\left|\mathbf{n}\cdot\mathbf{n}_{f,+}\right|}{2}\mathlarger{\sum}_{j=1}^{N_{fp}}\left(\omega_{j}\left|\psi_{ei,+}\left(\mathbf{r}_{j}^{f}\right)\right|\right)\right]\Bigg)\Bigg]\Bigg].
\end{array}
\end{equation}

For the jump of the flux point $i$ on the edge $e$, $\llbracket u\rrbracket_{ei}^{2}$, we have the influence of all the correction fields surrounding $e$ evaluated at the flux point $\mathbf{r}_{i}^{e}$ (Terms B and D). In addition, we have the influence of the correction field $\psi_{ei}$ evaluated at all the flux points (Terms A, C and E). Although this last equation may seem complicated, it is quite logical. These individual contributions are best understood through a graphical means as depicted in Figure~\ref{fig:influence correction field}. For simplicity, we just represent the terms arising from triangle $IJK$, since we have a symmetrical influence from triangle $IJK_{+1}$.
\begin{figure}[H]
\centering
\includegraphics[width=10cm]{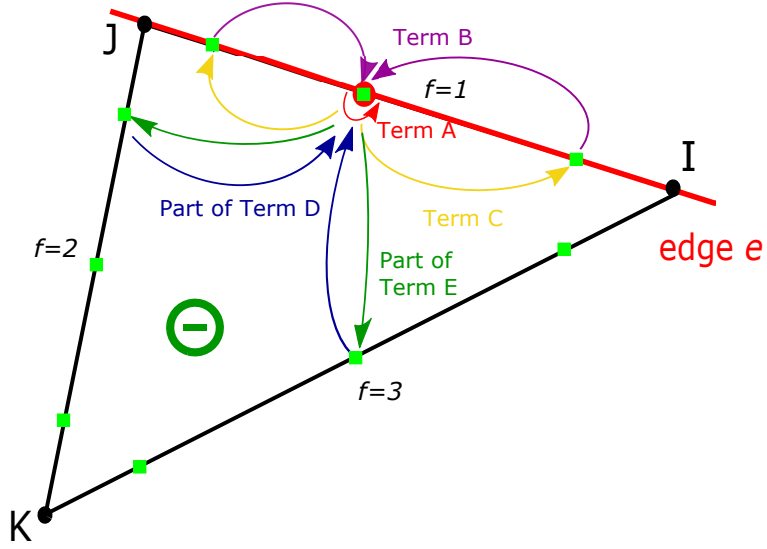}
\caption{Influence of the correction fields, for the left side of the edge (Triangle $IJK$), on the flux point $\left(f,j\right)=\left(1,2\right)$ for $p=2$.}
\label{fig:influence correction field}
\end{figure}

We now return to equation~\eqref{eq:decomposition Theta} and provide a complete expansion for the diffusion terms toward stability. We combine equations~\eqref{eq:theta e2 GL quad} and~\eqref{eq:Theta dif 1}, and introduce a simplification in our notation where the parameter $k=\left\lbrace -,+\right\rbrace$; where $k=-$ signifies triangle $IJK$, while $k=+$ denotes triangle $IJK_{+1}$. The first term in equation~\eqref{eq:Theta dif 1} is simplified into,

\begin{equation}\label{eq:final expression theta dif,1}
\begin{array}{lll}
\Theta_{dif,1}&\leq&\mathlarger{\mathlarger{\sum}}_{e=1}^{N_{e}}\Bigg[\left|J^{e}\right|\mathlarger{\mathlarger{\sum}}_{i=1}^{N_{fp}}\Bigg[\dfrac{\llbracket u\rrbracket_{ei}^{2}}{4}\mathlarger{\sum}_{k}\Bigg[F_{s,k}^{e}\omega_{i}\psi_{ei,k}\left(\mathbf{r}_{i}^{e}\right)+\dfrac{F_{s,k}^{e}}{2}\mathlarger{\sum}_{\substack{j=1\\j\neq i}}^{N_{fp}}\left(\omega_{i}\left|\psi_{ej,k}\left(\mathbf{r}_{i}^{e}\right)\right|+\omega_{j}\left|\psi_{ei,k}\left(\mathbf{r}_{j}^{e}\right)\right|\right)\\
&&+\mathlarger{\sum}_{\substack{f=1\\f\neq e}}^{3}\mathlarger{\sum}_{j=1}^{N_{fp}}\left(\dfrac{F_{s,k}^{f}\left|\mathbf{n}\cdot\mathbf{n}_{f,k}\right|}{2}\left(\omega_{i}\left|\psi_{fj,k}\left(\mathbf{r}_{i}^{e}\right)\right|+\omega_{j}\left|\psi_{ei,k}\left(\mathbf{r}_{j}^{f}\right)\right|\right)\right)\Bigg]\Bigg]\Bigg],
\end{array}
\end{equation}

and using Lemma~\ref{lem:theta e2 GL quad}, we compute $\Theta_{dif}$,
\begin{equation}
\begin{array}{lll}
\Theta_{dif}&=&\mathlarger{\sum}_{e=1}^{N_{e}}\left[\Theta_{e,1}+\Theta_{e,2}\right]\\
&\leq&\mathlarger{\mathlarger{\sum}}_{e=1}^{N_{e}}\left|J^{e}\right|\mathlarger{\mathlarger{\sum}}_{i=1}^{N_{fp}}\Bigg[\llbracket u\rrbracket_{ei}^{2}\Bigg[-\tau_{ei}\omega_{i}+\dfrac{1}{4}\mathlarger{\sum}_{k}\Bigg[F_{s,k}^{e}\omega_{i}\psi_{ei,k}\left(\mathbf{r}_{i}^{e}\right)\\
&&+\dfrac{F_{s,k}^{e}}{2}\mathlarger{\sum}_{\substack{j=1\\j\neq i}}^{N_{fp}}\left(\omega_{i}\left|\psi_{ej,k}\left(\mathbf{r}_{i}^{e}\right)\right|+\omega_{j}\left|\psi_{ei,k}\left(\mathbf{r}_{j}^{e}\right)\right|\right)\\
&&+\mathlarger{\sum}_{\substack{f=1\\f\neq e}}^{3}\mathlarger{\sum}_{j=1}^{N_{fp}}\left(\dfrac{F_{s,k}^{f}\left|\mathbf{n}\cdot\mathbf{n}_{f,k}\right|}{2}\left(\omega_{i}\left|\psi_{fj,k}\left(\mathbf{r}_{i}^{e}\right)\right|+\omega_{j}\left|\psi_{ei,k}\left(\mathbf{r}_{j}^{f}\right)\right|\right)\right)\Bigg]\Bigg]\Bigg].
\end{array}
\end{equation}

To evaluate the limiting value, $\tau^*$ to ensure stability, we remove the exclusion of $f\neq e$ and similarly for the second to last term, where we remove the exclusion of $j\neq i$. This results in an additional term $\left|\psi_{ei,k}\left(\mathbf{r}_{i}^{e}\right)\right|$ that is subtracted from the first term. We thus ensure energy stability for the diffusion equation in two dimensions for triangles. The IP scheme requires $\tau_{e,i}\geq\tau_{e,i}^{*}\,\,\forall e\,\forall i$ where $\tau^{*}$ is defined as
\begin{equation}\label{eq:IP stability without minimization}
\begin{array}{lll}
\tau_{ei}^{*}&=&\dfrac{1}{4}\mathlarger{\sum}_{k}\Bigg[F_{s,k}^{e}\left(\psi_{ei,k}\left(\mathbf{r}_{i}^{e}\right)-\left|\psi_{ei,k}\left(\mathbf{r}_{i}^{e}\right)\right|\right)\\
&&+\mathlarger{\sum}_{f=1}^{3}\mathlarger{\sum}_{j=1}^{N_{fp}}\left(\dfrac{F_{s,k}^{f}\left|\mathbf{n}\cdot\mathbf{n}_{f}\right|}{2}\left(\left|\psi_{fj,k}\left(\mathbf{r}_{i}^{e}\right)\right|+\dfrac{\omega_{j}}{\omega_{i}}\left|\psi_{ei,k}\left(\mathbf{r}_{j}^{f}\right)\right|\right)\right)\Bigg].
\end{array}
\end{equation}

From Postulate~\ref{pos:independent} the ESFR high-order method with either the IP or BR2 schemes for the diffusion problem is independent of $\kappa$. As a consequence, the energy stability is also independent of $\kappa$. Thus we finally obtain,,
\begin{equation}
\begin{array}{lll}
\tau_{ei}^{*}&=&\dfrac{1}{4}\min\limits_{\kappa}\mathlarger{\sum}_{k}\Bigg[F_{s,k}^{e}\left(\psi_{ei,k}\left(\mathbf{r}_{i}^{e}\right)-\left|\psi_{ei,k}\left(\mathbf{r}_{i}^{e}\right)\right|\right)\\
&&+\mathlarger{\sum}_{f=1}^{3}\mathlarger{\sum}_{j=1}^{N_{fp}}\left(\dfrac{F_{s,k}^{f}\left|\mathbf{n}\cdot\mathbf{n}_{f}\right|}{2}\left(\left|\psi_{fj,k}\left(\mathbf{r}_{i}^{e}\right)\right|+\dfrac{\omega_{j}}{\omega_{i}}\left|\psi_{ei,k}\left(\mathbf{r}_{j}^{f}\right)\right|\right)\right)\Bigg].
\end{array}
\end{equation}
\end{proof}

\begin{rmk}
We could simplify~\eqref{eq:criterion IP 2D triangle} further by noting that $\left|\mathbf{n}\cdot\mathbf{n}_{f}\right|\leq 1$, and that the quantity $F_{s,max}=\max\left(F_{s}\right)$. However such simplification would result in a less accurate condition and hence a larger value of $\tau^{*}$. We refrain from this simplification as increasing $\tau$ results in an increase in $\Delta t_{max}$ \cite{sam_article1D}.
\end{rmk}

\begin{rmk}
In the following graphs, we represent the maximum value of the array of $\tau_{theory}^{*}$ evaluated via equation~\eqref{eq:IP stability without minimization}. These numerical simulations show that the minimum of $\tau_{ei}^{*}$ is obtained when $\kappa$ approaches $\kappa_{+}$. In the rest of the article, we will compute $\tau_{ei}^{*}$ in equation~\eqref{eq:criterion IP 2D triangle} with $\kappa=\kappa_{+}$ for all ESFR schemes.
\begin{figure}[H] 
\centering

\begin{subfigure}{0.45\textwidth}
\centering
\includegraphics[width=\linewidth]{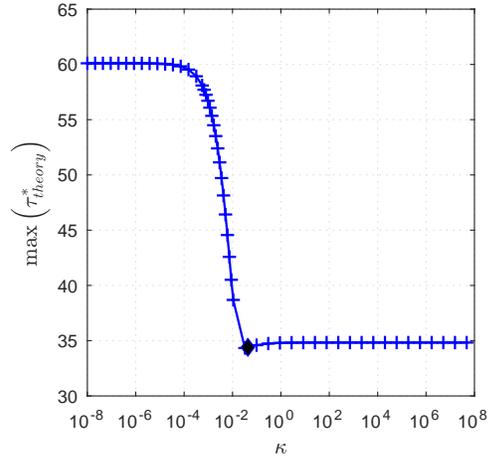}
\caption{$\max\left(\tau_{theory}^{*}\right)$ for $p=2$.} \label{fig:indep kappa p2}
\end{subfigure}\hspace*{\fill}
\begin{subfigure}{0.45\textwidth}
\centering
\includegraphics[width=\linewidth]{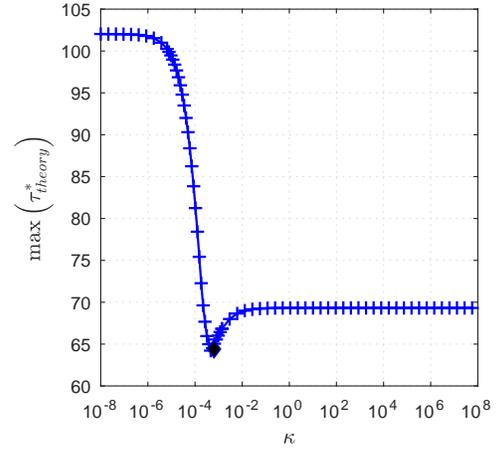}
\caption{$\max\left(\tau_{theory}^{*}\right)$ for $p=3$.} \label{indep kappa p3}
\end{subfigure}

\medskip
\begin{subfigure}{0.45\textwidth}
\centering
\includegraphics[width=\linewidth]{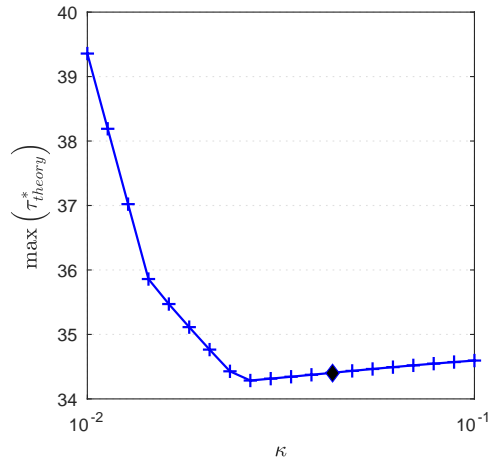}
\caption{$\max\left(\tau_{theory}^{*}\right)$ for $p=2$ zoom in.} \label{indep kappa p2 zoom}
\end{subfigure}\hspace*{\fill}
\begin{subfigure}{0.45\textwidth}
\centering
\includegraphics[width=\linewidth]{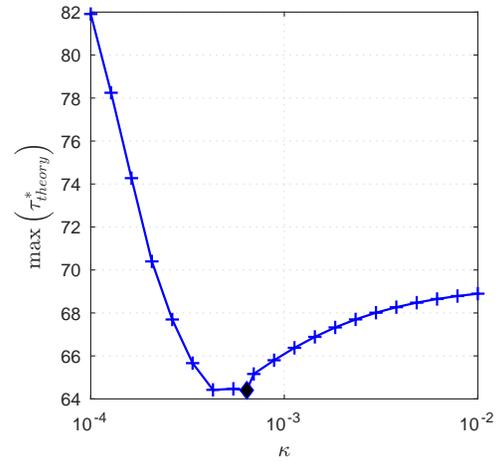}
\caption{$\max\left(\tau_{theory}^{*}\right)$ for $p=3$ zoom in.} \label{indep kappa p3 zoom}
\end{subfigure}
\caption{Minimization of $\max\left(\tau_{theory}^{*}\right)$ along $\kappa$ for different $p$ where the diamond corresponds to $\kappa=\kappa_{+}$.}
\label{fig:plot tau minimization}
\end{figure}
These numerical simulations are obtained for the mesh represented in Figure~\ref{fig:regular mesh} with 128 elements.
\end{rmk}

\subsection{Numerical results}\label{sec:Numerical results IP 2D}

The purpose of the following numerical simulation is to find the minimal numerical penalty term $\tau_{numerical}^{*}$  which guarantees stability for a defined problem. This numerical penalty term will be compared with~\eqref{eq:criterion IP 2D triangle}.

The problem solved is of the following: Find $u\left(x,y,t\right)$ such that
\begin{equation}\label{PDE}
\left\lbrace
\begin{array}{llll}
\dfrac{\partial u}{\partial t}&=&b\Delta u,&\,\text{for}\, x\times y \in\left[-1;1\right]^{2}\, \text{and}\, t\in\left[0,2\right],\vspace{0.2cm}\\
u\left(x,y,0\right)&=&\sin\left(\pi x\right)\sin\left(\pi y\right).
\end{array}
\right.
\end{equation}
We impose periodic boundary conditions and the exact solution for this system is $u_{exact}=e^{-2b\pi^{2}t}\sin\left(\pi x\right)\sin\left(\pi y\right)$.

We take $b=0.1$ and use the fourth order five stage Runge-Kutta as the time integrator \cite{carpenter_fourth-order_1994}. The flux points are taken as the Gauss-Legendre nodes while the solution points are taken as the $\alpha$-optimised nodes \cite{NDG}. The mesh generated is regular as shown in Figure~\ref{fig:regular mesh}.
\begin{figure}[H]
\centering
\includegraphics[width=3.2in]{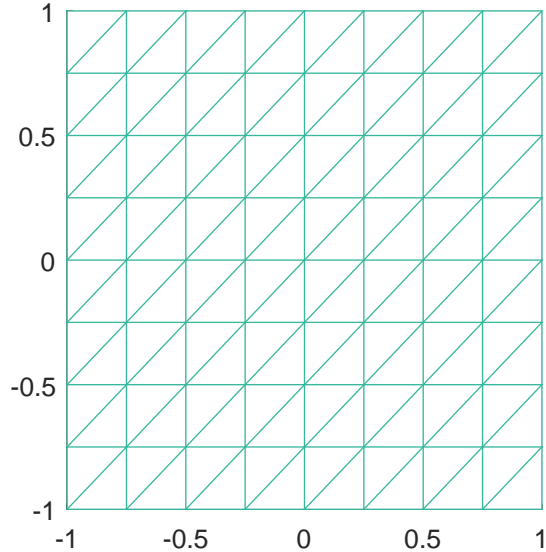}
\caption{Representation of a regular mesh $8\times 8\times 2$.}
\label{fig:regular mesh}
\end{figure}

The time step is taken as advised by Hesthaven and Warburton \cite{NDG},
\begin{equation}
\Delta t=CFL\left(\dfrac{2}{3}\min \Delta r_{i}^{2}\right)\min_{\Omega}\left(\dfrac{r_{D}}{\left| b\right|}\right),
\end{equation}
where $CFL=10^{-2}$ in our cases, $\Delta r_{i}$ is a measure of the distance between the solution points and $r_{D}$ is the ratio between the perimeter of the triangle with its area.

While in our proof, $\tau_{theory}^{*}$ is an array, we consider here, for simplicity, that $\tau$ is a constant for all the edges and flux points. We apply the same procedure as \cite{sam_article1D}: choose a criterion of stability on the upper bound of the solution, $\left|u\left(x,t\right)\right|\leq u_{max}$. As, for all $t$, $\left|u_{exact}\left(x,t\right)\right|\leq 1$, choosing $u_{max}=2$ is sufficient and allow some instabilities arising from the scheme.  Then select a sufficiently low value for $\tau_{0}$ as a starting point to ensure an unstable solution and increase it by $\mbox{d}\tau$ of 0.1 until the solution is stable. Taking the final time equal to 2 ensures that a few thousands iterations are run. The results are presented for four methods: $c_{DG}/\kappa_{DG}$, $c_{DG}/\kappa_{+}$, $c_{+}/\kappa_{DG}$ and $c_{+}/\kappa_{+}$ where $c_{+}=\kappa_{+}$ is taken to be equal to the last column of Table~\ref{tab:c+ triangle}.

\begin{table}[H]
\centering
\setlength\tabcolsep{4pt}
\begin{minipage}{0.45\textwidth}
\centering
\resizebox{\textwidth}{!}{
\begin{tabular}{|c||c|c||c|c|}
\hline
$p$ & \multicolumn{2}{c||}{2} & \multicolumn{2}{c|}{3}\\
\hline
\bf \backslashbox{$c$}{$\kappa$} & $\kappa_{DG}$ & $\kappa_{+}$ & $\kappa_{DG}$ & $\kappa_{+}$\\
\hline
$\max\left(\tau_{theory}^{*}\right)$ & 34.4 & 34.4 & 64.4 & 64.4\\
\hline
\hline
$c_{DG}$ & 19.3 & 19.3 & 37.0 & 37.0\\
\hline
$c_{+}$ & 15.3 & 15.3 & 36.4 & 36.4\\
\hline
\end{tabular}
}
\caption{$\tau_{numerical}^{*}$ for the IP scheme for p=2 and p=3 for a $8\times 8\times 2$ mesh.}
\label{tab:result IP 2D 8 elements}
\end{minipage}
\hfill
\begin{minipage}{0.45\textwidth}
\centering
\resizebox{\textwidth}{!}{
\begin{tabular}{|c||c|c||c|c|}
\hline
$p$ & \multicolumn{2}{c||}{2} & \multicolumn{2}{c|}{3}\\
\hline
\bf \backslashbox{$c$}{$\kappa$} & $\kappa_{DG}$ & $\kappa_{+}$ & $\kappa_{DG}$ & $\kappa_{+}$\\
\hline
$\max\left(\tau_{theory}^{*}\right)$ & 64.8 & 64.8 & 129.9 & 129.9\\
\hline
\hline
$c_{DG}$ & 41.7 & 41.7 & 76.1 & 76.1\\
\hline
$c_{+}$ & 39.1 & 39.2 & 75.7 & 75.7\\
\hline
\end{tabular}
}
\caption{$\tau_{numerical}^{*}$ for the IP scheme for p=2 and p=3 for a $16\times 16\times 2$ mesh.}
\label{tab:result IP 2D 16 elem}
\end{minipage}
\end{table}

For both tables \ref{tab:result IP 2D 8 elements} and \ref{tab:result IP 2D 16 elem}, the maximum of the criterion found in~\eqref{eq:criterion IP 2D triangle} is greater than $\tau_{numerical}^{*}$. These results do not validate~\eqref{eq:criterion IP 2D triangle} for every edge and every flux point as $\tau_{numerical}$ is taken to be a constant. However since $\max\left(\tau_{theory}^{*}\right)\geq \tau_{numerical}^{*}$, this lead us to believe that the criterion~\eqref{eq:criterion IP 2D triangle} is valid.
\section{BR2 stability condition}\label{sec:BR2 section}

\subsection{Theoretical result}

For the BR2 scheme, the steps are identical to that of the IP scheme with the exception of $\mathbf{q}^*$. Therefore Lemma~\ref{lem:theta dif} is modified into
\begin{equation}\label{eq:theta dif BR2}
\Theta_{dif}=\mathlarger{\sum}_{e=1}^{N_{e}}\left[\underbrace{\stretchint{5ex}_{\bs\bs\Gamma_{e}}\left(-\{\{\mathbf{q}-\mathbf{\nabla} u\}\}\right)\cdot \mathbf{\llbracket u\rrbracket}_{e}\,\mbox{d}\Gamma_{e}}_{\Theta_{e,1}} +\underbrace{\stretchint{5ex}_{\bs\bs\Gamma_{e}}s\, \{\{\mathbf{r^{e}}\left(\mathbf{\llbracket u \rrbracket}\right)\}\}\cdot\mathbf{\llbracket u \rrbracket}_{e}}_{\Theta_{e,2}}\,\mbox{d}\Gamma_{e}\right].
\end{equation}

The first term $\Theta_{e,1}$ is the same as the IP scheme and hence $\Theta_{dif,1}=\sum_{e=1}^{N_{e}}\Theta_{e,1}$ follows the same derivations up to~\eqref{eq:final expression theta dif,1}; however, the second term requires additional derivations. Similar to the 1D case \cite{sam_article1D}, we expand $\{\{\mathbf{r^{e}}\left(\mathbf{\llbracket u \rrbracket}\right)\}\}$ in terms of $\mathbf{\llbracket u \rrbracket}_{e}$.  We base our analysis on the article of Huynh \cite{huynh_high-order_2011}, where he presented an equivalence between the DG and the FR formulations for the advection equation. This equivalence has been presented for all high dimensions and for curvilinear elements~\cite{zwanenburg_equivalence_2016}; it is, once again, presented in the following for completeness of the paper. The advection equation is similar to \eqref{eq:main compu eq 2D} with $\mathbf{f}=-u\mathbf{a}$, where $\mathbf{a}$ is the velocity. The FR approach, for the advection problem, is similar to~\eqref{eq:main compu corrected eq 2D},
\begin{equation}\label{eq:adv FR 2D}
\dfrac{\partial \hat{u}_{n}}{\partial t}=\hat{\mathbf{\nabla}}\cdot\hat{\mathbf{f}}_{n}\left(\hat{u}_{n}\right)+\hat{\mathbf{\nabla}}\cdot\sum_{f=1}^{3}\sum_{j=1}^{N{fp}}\left[\left(\hat{\mathbf{f}}_{n,fj}^{*}-\hat{\mathbf{f}}_{n,fj}\right)\cdot\hat{\mathbf{n}}_{fj}\right]\mathbf{h}_{fj}\left(\mathbf{r}\right).
\end{equation}
However the DG formulation, in the strong form, as given in~\cite{NDG}, for an element $\Omega_{n}$ is
\begin{equation}\label{eq:adv DG 2D}
\stretchint{5ex}_{\bs\bs\Omega_{s}}\dfrac{\partial \hat{u}_{n}}{\partial t}\Phi\,\mbox{d}\Omega_{s}=\stretchint{5ex}_{\bs\bs \Omega_{s}}\hat{\mathbf{\nabla}}\cdot\hat{\mathbf{f}}_{n}\left(u_{n}\right)\Phi\,\mbox{d}\Omega_{s}+\stretchint{5ex}_{\bs\bs\Gamma_{s}}\left[\left(\hat{\mathbf{f}}^{*}_{n}-\hat{\mathbf{f}}_{n}\right)\cdot\hat{\mathbf{n}}\right]\Phi\,\mbox{d}\Gamma_{s},
\end{equation}
with $\Phi$ as the test function. To retrieve the FR formulation from equation~\eqref{eq:adv DG 2D}, we must define, for a face $f$, the correction field $\delta_{f}$ such that
\begin{equation}\label{eq:delta_def 1}
\int_{\Omega_{s}}\delta_{f}\Phi\,\mbox{d}\Omega_{s}=\int_{\Gamma_{s,f}}\left[\left(\hat{\mathbf{f}}_{n,f}^{*}-\hat{\mathbf{f}}_{n,f}\right)\cdot\hat{\mathbf{n}}\right]\Phi\,\mbox{d}\Gamma_{s,f},
\end{equation}
where $\delta_{f}$ is in fact a lifting operator associated to the face $f$, $\Gamma_{s,f}$, of $\Omega_{s}$.
We evaluate exactly the integral over $\Gamma_{s,f}$ employing Gauss-Legendre quadratures, (nodes $\mathbf{r}_{j}^{f}$, weight $\omega_{j}$),
\begin{equation}
\int_{\Omega_{s}}\delta_{f}\Phi\,\mbox{d}\Omega_{s}=\mathlarger{\sum}_{j=1}^{N_{fp}}\left.\left[\left(\hat{\mathbf{f}}_{f}^{*}-\hat{\mathbf{f}}_{n,fj}\right)\cdot\hat{\mathbf{n}}_{n,fj}\right]\right|_{\mathbf{r}_{j}^{f}}\omega_{j}\Phi\left(\mathbf{r}_{j}^{f}\right).
\end{equation}
Upon removing the corrective flux, we must define $\delta_{fj}^{*}$ such that
\begin{equation}\label{eq:delta_def 2}
\int_{\Omega_{s}}\delta_{fj}^{*}\Phi\,\mbox{d}\Omega_{s}=\omega_{j}\Phi\left(\mathbf{r}_{j}^{f}\right).
\end{equation}
From \eqref{eq:adv DG 2D}, let us decompose $\Gamma_{
s}$ into $\sum_{f=1}^{3}\Gamma_{s,f}$,
\begin{equation}
\begin{array}{rll}
\stretchint{5ex}_{\bs\bs\Omega_{s}}\dfrac{\partial \hat{u}_{n}}{\partial t}\Phi\,\mbox{d}\mbox{r}&=&\stretchint{5ex}_{\bs\bs \Omega_{s}}\hat{\mathbf{\nabla}}\cdot\hat{\mathbf{f}}_{n}\left(\hat{u}_{n}\right)\Phi\,\mbox{d}\Omega_{s}+\mathlarger{\sum}_{f=1}^{3}\stretchint{5ex}_{\bs\bs\Gamma_{s,f}}\left[\left(\hat{\mathbf{f}}_{n}^{*}-\hat{\mathbf{f}}_{n}\right)\cdot\hat{\mathbf{n}}\right]\Phi\,\mbox{d}\Gamma_{s,f},\\
&=&\stretchint{5ex}_{\bs\bs\Omega_{s}}\mathbf{\nabla}\cdot\hat{\mathbf{f}}_{n}\left(\hat{u}_{n}\right)\Phi\,\mbox{d}\Omega_{s}+\mathlarger{\sum}_{f=1}^{3}\mathlarger{\sum}_{j=1}^{N_{fp}}\left[\left(\hat{\mathbf{f}}_{n,fj}^{*}-\hat{\mathbf{f}}_{n,fj}\right)\cdot\hat{\mathbf{n}}_{fj}\right]\stretchint{5ex}_{\bs\bs\Omega_{s}}\delta_{fj}^{*}\Phi\,\mbox{d}\Omega_{s}\\
0&=&\stretchint{7ex}_{\bs\bs\Omega_{s}}\left[\left(-\dfrac{\partial \hat{u}_{n}}{\partial t}+\hat{\mathbf{\nabla}}\cdot\hat{\mathbf{f}}_{n}\left(\hat{u}_{n}\right)+\mathlarger{\sum}_{f=1}^{3}\mathlarger{\sum}_{j=1}^{N_{fp}}\left[\left(\hat{\mathbf{f}}_{n,fj}^{*}-\hat{\mathbf{f}}_{n,fj}\right)\cdot\hat{\mathbf{n}}_{fj}\right]\delta_{fj}^{*}\right)\Phi \right]\,\mbox{d}\Omega_{s}.
\end{array}
\end{equation}

Using the lifting operator enables the test function $\Phi$ to be factored out and removed from the integral over the different terms of the PDE, resulting into a differential formulation: the FR approach,
\begin{equation}\label{eq:FR with delta adv}
\dfrac{\partial \hat{u}_{n}}{\partial t}=\mathbf{\nabla}\cdot\hat{\mathbf{f}}_{n}\left(\hat{u}_{n}\right)+\mathlarger{\sum}_{f=1}^{3}\mathlarger{\sum}_{j=1}^{N_{fp}}\left[\left(\hat{\mathbf{f}}_{n,fj}^{*}-\hat{\mathbf{f}}_{n,fj}\right)\cdot\hat{\mathbf{n}}_{fj}\right]\delta_{fj}^{*}.
\end{equation}

\begin{lem}\label{lem:equivalence DG FR}
Employing the ESFR correction fields in equation~\eqref{eq:ESFR property} to define $\delta_{fj}^{*}$, there is a unique value of $c$ such that equation~\eqref{eq:delta_def 2} is valid and this value is $c=0$.
\end{lem}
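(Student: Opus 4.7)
The plan is to recognize that in the FR differential form the scalar correction multiplier effectively acts as $\delta_{fj}^{*}=\phi_{fj}=\hat{\nabla}\cdot\mathbf{h}_{fj}=\sum_{i=1}^{N_{p}}\sigma_{fj,i}^{c}L_{i}$, so the lemma reduces to characterizing when this polynomial satisfies the lifting identity~\eqref{eq:delta_def 2} for every test function $\Phi\in P_{p}(\Omega_{s})$. Testing~\eqref{eq:delta_def 2} against $\Phi=L_{i}$ and using orthonormality of $(L_{i})_{i\in\llbracket 1,N_{p}\rrbracket}$ on $\Omega_{s}$ collapses the lifting condition to the modal requirement $\sigma_{fj,i}^{c}=\omega_{j}L_{i}(\mathbf{r}_{j}^{f})$ for every $i\in\llbracket 1,N_{p}\rrbracket$.

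Second, I would compute the boundary integral on the right-hand side of~\eqref{eq:ESFR formula}. Property~\eqref{eq:h dot n} makes $\mathbf{h}_{fj}\cdot\hat{\mathbf{n}}$ vanish at all $p+1$ flux points of every face $k\neq f$; combined with~\eqref{eq:prop1 h along edge polynomial p}, which forces this trace to lie in $P_{p}$ along each edge, the trace vanishes identically on faces $k\neq f$. On face $f$, the integrand $(\mathbf{h}_{fj}\cdot\hat{\mathbf{n}})L_{i}$ is a polynomial of degree at most $2p$ along the edge, so exactness of the $(p+1)$-point Gauss-Legendre rule yields $\int_{\Gamma_{s}}(\mathbf{h}_{fj}\cdot\hat{\mathbf{n}})L_{i}\,\mbox{d}\Gamma_{s}=\omega_{j}L_{i}(\mathbf{r}_{j}^{f})$.

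Writing the ESFR relation~\eqref{eq:ESFR formula} as $(cM+I)\sigma^{c}=v$, with $M_{ik}=\sum_{m=1}^{p+1}\binom{p}{m-1}(D^{(m,p)}L_{i})(D^{(m,p)}L_{k})$ and $v_{i}=\omega_{j}L_{i}(\mathbf{r}_{j}^{f})$, existence at $c=0$ is immediate since $\sigma^{0}=v$; this yields $\phi_{fj}=\omega_{j}\sum_{i}L_{i}(\mathbf{r}_{j}^{f})L_{i}$, which is exactly the Riesz representative on $P_{p}$ of the functional $\Phi\mapsto\omega_{j}\Phi(\mathbf{r}_{j}^{f})$, and hence satisfies~\eqref{eq:delta_def 2}. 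For uniqueness, the same modal reduction forces any admissible $c$ to satisfy $cMv=0$; since each $D^{(m,p)}$ annihilates only polynomials of degree strictly less than $p$, the kernel of $M$ consists of coefficient vectors whose polynomial reconstruction lies in $P_{p-1}(\Omega_{s})$. The reconstruction $\sum_{i}v_{i}L_{i}=\omega_{j}K(\cdot,\mathbf{r}_{j}^{f})$ is the reproducing kernel of $P_{p}$ evaluated at $\mathbf{r}_{j}^{f}$, which cannot belong to $P_{p-1}$ without destroying its reproducing property on the degree-$p$ part of any polynomial; hence $Mv\neq 0$, forcing $c=0$.

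The main obstacle I expect is this last non-degeneracy step, namely the rigorous verification that $\sum_{i}v_{i}L_{i}\notin P_{p-1}(\Omega_{s})$. The reproducing-kernel viewpoint is compact but leans on a general property of orthonormal bases; a fully self-contained derivation could instead exhibit the leading degree-$p$ coefficient directly from the quadrature weight and the known values of $(L_{i})$ at Gauss-Legendre boundary nodes.
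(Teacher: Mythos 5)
Your proposal is correct and follows essentially the same route as the paper: the paper works in weak form (substitute a test function $\Phi$ for $L_{i}$ in the ESFR relation, integrate by parts, and use Gauss--Legendre exactness of the degree-$2p$ edge integrand together with the Kronecker-delta property of $\mathbf{h}_{fj}\cdot\hat{\mathbf{n}}$ to obtain $\int_{\Gamma_{s}}\Phi\left(\mathbf{h}_{fj}\cdot\hat{\mathbf{n}}\right)\mbox{d}\Gamma_{s}=\omega_{j}\Phi\left(\mathbf{r}_{j}^{f}\right)$), which is exactly your computation of $v_{i}$ written in modal-coefficient language, and your $(cM+I)\sigma^{c}=v$ system is the matrix form of the paper's final equivalence. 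Your uniqueness step ($Mv\neq 0$, since $M$ is positive semidefinite with kernel $P_{p-1}$) is equivalent to the paper's choice $\Phi=\phi_{fj}$ yielding $c\sum_{m}\binom{p}{m-1}\left(D^{\left(m,p\right)}\phi_{fj}\right)^{2}=0$; the non-degeneracy point you flag (that the reconstructed polynomial is not in $P_{p-1}$) is likewise left implicit in the paper, so you are not missing anything the paper supplies.
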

\begin{proof}
Replacing $L_{i}$ by $\Phi$, a test function, in equation~\eqref{eq:ESFR property}, we obtain the following,
\begin{equation}
\begin{array}{lll}
&&\stretchint{5ex}_{\bs \bs \Omega_{s}}\mathbf{h}_{fj}\cdot\hat{\mathbf{\nabla}} \Phi \mbox{d}\Omega_{s}=c\mathlarger{\sum}_{m=1}^{p+1}\binom{p}{m-1}\left(D^{\left(m,p\right)}\Phi\right)\left(D^{\left(m,p\right)}\phi_{fj}\right)\\
&\Leftrightarrow&\stretchint{5ex}_{\bs \bs \Omega_{s}}\hat{\mathbf{\nabla}}\left(\mathbf{h}_{fj}\Phi\right)\mbox{d}\Omega_{s}-\stretchint{5ex}_{\bs \bs \Omega_{s}}\Phi\hat{\mathbf{\nabla}}\cdot \mathbf{h}_{fj} \mbox{d}\Omega_{s}=c\mathlarger{\sum}_{m=1}^{p+1}\binom{p}{m-1}\left(D^{\left(m,p\right)}\Phi\right)\left(D^{\left(m,p\right)}\phi_{fj}\right)\\
&\Leftrightarrow&\stretchint{5ex}_{\bs \bs \Omega_{s}}\phi_{fj} \Phi\,\mbox{d}\Omega_{s}=\stretchint{5ex}_{\bs \bs \Gamma_{s}} \Phi\left(\mathbf{h}_{fj}\cdot \mathbf{\hat{n}}\right)\,\mbox{d}\Gamma_{s}-c\mathlarger{\sum}_{m=1}^{p+1}\binom{p}{m-1}\binom{p}{m-1}\left(D^{\left(m,p\right)}\Phi\right)\left(D^{\left(m,p\right)}\phi_{fj}\right).
\end{array}
\end{equation}
Since $\mathbf{h}_{fj}\cdot \mathbf{\hat{n}}\in R_{p}\left(\Gamma_{s}\right)$, we have $\Phi\left(\mathbf{h}_{fj}\cdot \mathbf{\hat{n}}\right)$ a polynomial of degree $2p$ on the edge. We use Gauss-Legendre quadrature ($N_{fp}$ points, same quadrature as~\eqref{eq:delta_def 2}) to compute the integral of the right-hand side. Moreover $\mathbf{h}_{fj}$ verifies~\eqref{eq:h dot n}, hence we obtain
\begin{equation}
\int_{\Omega_{s}}\phi_{fj}\Phi\,\mbox{d}\Omega_{s}=\omega_{j}\Phi\left(\mathbf{r}_{j}^{f}\right)+c\mathlarger{\sum}_{m=1}^{p+1}\binom{p}{m-1}\left(D^{\left(m,p\right)}\Phi\right)\left(D^{\left(m,p\right)}\phi_{fj}\right).
\end{equation}

Therefore,
\begin{equation}
\begin{array}{lll}
&&\stretchint{5ex}_{\bs\bs\Omega_{s}}\phi_{fj}\Phi\,\mbox{d}x=\omega_{j}\Phi\left(\mathbf{r}_{j}^{f}\right)\\
&\Leftrightarrow&c\mathlarger{\sum}_{m=1}^{p+1}\binom{p}{m-1}\left(D^{\left(m,p\right)}\Phi\right)\left(D^{\left(m,p\right)}\phi_{fj}\right)=0
\end{array}
\end{equation}
Taking, for instance, $\Phi=\phi_{fj}$, the previous equation is valid if $c=0$.
\end{proof}

\begin{rmk}
With $c=0$, the FR formulation is equivalent to the DG formulation. Hence the associated correction function is denoted $\phi_{fj}^{DG}$.
\end{rmk}

We now apply the analogy between the correction field, $\phi_{fj}^{DG}$ and the lifting operator $\mathbf{r^{e}}$. The support of $\mathbf{r^{e}}$ is the union of the element of the triangles forming the edge $e$. On each one of the elements, $\mathbf{r^{e}}$ is a polynomial of degree $p$.

We define the space $\Omega_{e}=\Omega_{-}\bigcup\Omega_{+}$. Where $\Omega_{-}$ (resp. $\Omega_{+}$) is the interior (resp. exterior) element of edge $e$. Referring to Figure~\ref{fig:Part domain}, $\Omega_{-}=\Omega_{IJK}=\Omega_{n}$. From the affine mapping defined in~\eqref{eq:affine mapping}, we define the quantities,
\begin{eqnarray}
\mathbf{v}_{n}^{a}&=&-\dfrac{1}{2}\left(\mathbf{v}_{1,n}-\mathbf{v}_{2,n}\right)\vspace{0.2cm},\\
\mathbf{v}_{n}^{b}&=&-\dfrac{\sqrt{3}}{6}\left(\mathbf{v}_{1,n}\mathbf{v}_{2,n}-2\mathbf{v}_{3,n}\right)\vspace{0.2cm},\\
\mathbf{\bar{v}}_{n}&=&\dfrac{1}{3}\left(\mathbf{v}_{1,n}+\mathbf{v}_{2,n}+\mathbf{v}_{3,n}\right)\vspace{0.2cm},\\
\left|\mathbf{V}_{n}\right|&=&Det\begin{pmatrix}
v_{n,x}^{a}&v_{n,x}^{b}\\
v_{n,y}^{a}&v_{n,y}^{b}\\
\end{pmatrix},
\end{eqnarray}
where $v_{n,x}^{a}$ is the $x-$component of $\mathbf{v}_{n}^{a}$. We then define the surjection
\begin{equation}\label{eq:affine mapping re}
\begin{array}{lccl}
\mathcal{M}^{-1}_{e}\colon &\Omega_{e}&\to&\Omega_{s}\\
&\left(x,y\right)&\mapsto&\dfrac{1}{\left|\mathbf{V}_{-}\right|}\begin{pmatrix}
\left(x-\bar{v}_{-,x}\right)v_{-,y}^{b}-\left(y-\bar{v}_{-,y}\right)v_{-,x}^{b}\\
-\left(x-\bar{v}_{-,x}\right)v_{-,x}^{a}+\left(y-\bar{v}_{-,y}\right)v_{-,x}^{a}
\end{pmatrix}\left.\chi\right|_{-}\left(x,y\right)\vspace{0.2cm}\\
&&+&\dfrac{1}{\left|\mathbf{V}_{+}\right|}\begin{pmatrix}
\left(x-\bar{v}_{+,x}\right)v_{+,y}^{b}-\left(y-\bar{v}_{+,y}\right)v_{+,x}^{b}\\
-\left(x-\bar{v}_{+,x}\right)v_{+,x}^{a}+\left(y-\bar{v}_{+,y}\right)v_{+,x}^{a}
\end{pmatrix}\left.\chi\right|_{+}\left(x,y\right)
\end{array}
\end{equation}
where $\left.\chi\right|_{\Omega_{i}}\left(x,y\right)$ is equal to 1 if $\left(x,y\right)\in\Omega_{i}$, or 0 if $\left(x,y\right)\not\in\Omega_{i}$.
\begin{thm}\label{thm:formula re}
The lifting operator, $\mathbf{r^{e}}$, employed in the BR2 scheme,  is a linear combination of the correction field associated to the DG method and is equivalent to the following formula,
\begin{equation}\label{eq:formula re}
\mathbf{r^{e}}\left(\mathbf{\llbracket u \rrbracket}\right)\left(\mathbf{x}\right)=-\left(\mathlarger{\sum}_{j=1}^{N_{fp}}\mathbf{\llbracket u \rrbracket}_{ej}\left[\dfrac{F_{s,-}^{e}}{2}\phi_{ej,-}^{DG}\left(\mathcal{M}_{e}^{-1}\left(\mathbf{x}\right)\right)\left.\chi\right|_{-}\left(\mathbf{x}\right)+\dfrac{F_{s,+}^{e}}{2}\phi_{ej,+}^{DG}\left(\mathcal{M}_{e}^{-1}\left(\mathbf{x}\right)\right)\left.\chi\right|_{+}\left(\mathbf{x}\right)\right]\right),
\end{equation}
where subscript $-$ (resp. $+$) denotes the interior (resp. exterior) element.
\end{thm}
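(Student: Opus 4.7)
The plan is to verify the claimed formula directly by substituting it into the defining identity~\eqref{eq:re int def} of the lifting operator. Since $\mathbf{r^{e}}(\mathbf{\llbracket u\rrbracket})$ is uniquely determined by this identity within the space of element-wise polynomials of degree $p$ supported on $\Omega_e=\Omega_-\cup\Omega_+$, it is enough to check that the right-hand side of~\eqref{eq:formula re} reproduces the defining integral relation for every admissible vector test function $\bm{\Phi}$.

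First, I would split the integral $\int_{\Omega}\mathbf{r^{e}}\cdot\bm{\Phi}\,\mbox{d}\Omega$ into the two contributions on $\Omega_-$ and $\Omega_+$, using that the candidate formula vanishes outside $\Omega_e$. On each piece I would perform the affine change of variables $\mathbf{x}=\mathcal{M}_\pm(\mathbf{r})$, which pulls $\phi_{ej,\pm}^{DG}\circ\mathcal{M}_e^{-1}$ back to $\phi_{ej,\pm}^{DG}(\mathbf{r})$ on the reference triangle $\Omega_s$ and produces a Jacobian factor $|J_\pm|$. Denoting the pulled-back test functions by $\tilde{\bm{\Phi}}_\pm(\mathbf{r})=\bm{\Phi}(\mathcal{M}_\pm(\mathbf{r}))$ reduces each contribution to an integral of the form $\int_{\Omega_s}\phi_{ej,\pm}^{DG}(\mathbf{r})\,\tilde{\bm{\Phi}}_\pm(\mathbf{r})\,\mbox{d}\Omega_s$.

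Next, I would invoke Lemma~\ref{lem:equivalence DG FR}, which established that with $c=0$ the ESFR correction field $\phi_{ej}^{DG}$ satisfies
\begin{equation*}
\int_{\Omega_s}\phi_{ej}^{DG}(\mathbf{r})\,\Phi(\mathbf{r})\,\mbox{d}\Omega_s=\omega_j\Phi(\mathbf{r}_j^e)
\end{equation*}
for every polynomial test function $\Phi\in P_p(\Omega_s)$. Applying this component-wise collapses each reference-element integral to a point evaluation $\omega_j\,\tilde{\bm{\Phi}}_\pm(\mathbf{r}_j^e)=\omega_j\,\bm{\Phi}_\pm(\mathbf{x}_j^e)$, where $\mathbf{x}_j^e$ denotes the physical image of the $j$-th Gauss-Legendre flux point on edge $e$. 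Using the identity $|J_\pm|F_{s,\pm}^e=|J^e|$ built into the definition of $F_{s,\pm}^e$, the two Jacobians combine with the prefactors $F_{s,\pm}^e/2$ to give a common weight $|J^e|/2$ on each side, and the sum over $\pm$ assembles into the average $\{\{\bm{\Phi}\}\}(\mathbf{x}_j^e)$.

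The last step is to recognise the resulting expression
\begin{equation*}
-|J^e|\sum_{j=1}^{N_{fp}}\omega_j\,\mathbf{\llbracket u\rrbracket}_{ej}\cdot\{\{\bm{\Phi}\}\}(\mathbf{x}_j^e)
\end{equation*}
as the exact Gauss-Legendre evaluation of $-\int_{\Gamma_e}\mathbf{\llbracket u\rrbracket}\cdot\{\{\bm{\Phi}\}\}\,\mbox{d}\Gamma_e$. Exactness holds because $\mathbf{\llbracket u\rrbracket}\in R_p(\Gamma_s)$ and $\{\{\bm{\Phi}\}\}\in R_p(\Gamma_s)$, so the integrand has edge-polynomial degree at most $2p$, which the $N_{fp}=p+1$ Gauss-Legendre rule integrates exactly. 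This matches the right-hand side of~\eqref{eq:re int def}, establishing the formula. The main obstacle is the bookkeeping: keeping track of the metric quantities $|J_\pm|$, $|J^e|$, and $F_{s,\pm}^e$ consistently on both sides of the edge, ensuring that the affine pullbacks of $\bm{\Phi}$ are used with the correct orientation, and verifying that the support and degree hypotheses needed for Lemma~\ref{lem:equivalence DG FR} and for the Gauss-Legendre exactness both apply to the test functions arising in this computation.
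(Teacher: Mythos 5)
Your proposal is correct and is essentially the paper's own proof run in reverse: the paper starts from the defining identity~\eqref{eq:re int def}, applies the $2p$-exact Gauss--Legendre rule on the edge and then Lemma~\ref{lem:equivalence DG FR} to convert the point evaluations into reference-element volume integrals, whereas you substitute the candidate formula into~\eqref{eq:re int def} and unwind the identical chain (affine pullback with Jacobian $|J_{\pm}|$, Lemma~\ref{lem:equivalence DG FR}, the metric relation $|J_{\pm}|F_{s,\pm}^{e}=|J^{e}|$, and quadrature exactness). The only addition is your explicit appeal to uniqueness of the lifting operator among element-wise degree-$p$ polynomials on $\Omega_{e}$, a fact the paper uses implicitly when it factors out the test function $\bm{\Phi}$ at the end.
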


\begin{proof}
From the definition of $\mathbf{r^{e}}$ in~\eqref{eq:re int def}, we have
\begin{equation}
\begin{array}{lll}
\stretchint{5ex}_{\bs\bs\Omega}\mathbf{r^{e}}\left(\mathbf{\llbracket u \rrbracket}\right)\cdot\bm{\Phi}\,\mbox{d}\Omega&=&\stretchint{5ex}_{\bs\bs\Omega_{e}}\mathbf{r^{e}}\left(\mathbf{\llbracket u \rrbracket}\right)\cdot\bm{\Phi}\,\mbox{d}\Omega_{e}\vspace{0.2cm}\\
&=&-\stretchint{5ex}_{\bs \bs \Gamma_{e}}\mathbf{\llbracket u \rrbracket}\cdot\left\lbrace\left\lbrace\bm{\Phi}\right\rbrace\right\rbrace\,\mbox{d}\Gamma_{e}\vspace{0.2cm}\\
&=&-\dfrac{1}{2}\left[\stretchint{5ex}_{\bs \bs \Gamma_{e}}\mathbf{\llbracket u \rrbracket}\cdot\left.\bm{\Phi}\right|_{-}\,\mbox{d}\Gamma_{e}+\stretchint{5ex}_{\bs \bs \Gamma_{e}}\mathbf{\llbracket u \rrbracket}\cdot\left.\bm{\Phi}\right|_{+}\,\mbox{d}\Gamma_{e}\right]\vspace{0.2cm}\\
&=&-\dfrac{\left|J^{e}\right|}{2}\left[\stretchint{5ex}_{\bs \bs \Gamma_{s,e}}\mathbf{\llbracket u \rrbracket}\cdot\left.\bm{\Phi}\right|_{-}\,\mbox{d}\Gamma_{s,e}+\stretchint{5ex}_{\bs \bs \Gamma_{s,e}}\mathbf{\llbracket u \rrbracket}\cdot\left.\bm{\Phi}\right|_{+}\,\mbox{d}\Gamma_{s,e}\right].
\end{array}
\end{equation}

Both of these terms are integrals of a polynomial of degree less or equal to $2p$. Using Gauss-Legendre quadratures ($\left(\mathbf{r}_{i}\right)_{i\in\llbracket 1,N_{fp}\rrbracket}$ represents the nodes and $\left(\omega_{i}\right)_{i\in\llbracket 1,N_{fp}\rrbracket}$ represents the weights) we have,
\begin{equation}
\begin{array}{lll}
\stretchint{5ex}_{\bs\bs\Omega_{e}}\mathbf{r^{e}}\left(\mathbf{\llbracket u \rrbracket}\right)\cdot\bm{\Phi}\,\mbox{d}\Omega_{e}&=&-\dfrac{1}{2}\left|J^{e}\right|\left[\mathlarger{\sum}_{j=1}^{N_{fp}}\mathbf{\llbracket u \rrbracket}_{ej}\cdot\left(\left.\bm{\Phi}\right|_{-}\left(\mathbf{r}_{j}^{e}\right)\omega_{j}+\left.\bm{\Phi}\right|_{+}\left(\mathbf{r}_{j}^{e}\right)\omega_{j}\right)\right].
\end{array}
\end{equation}
Applying the result of Lemma~\ref{lem:equivalence DG FR}, we obtain
\begin{equation}
\begin{array}{lll}
\stretchint{5ex}_{\bs\bs\Omega_{e}}\mathbf{r^{e}}\left(\mathbf{\llbracket u \rrbracket}\right)\cdot\bm{\Phi}\,\mbox{d}\Omega_{e}&=&-\dfrac{1}{2}\left|J^{e}\right|\left[\mathlarger{\sum}_{j=1}^{N_{fp}}\mathbf{\llbracket u\rrbracket}_{ej}\cdot\left(\stretchint{4ex}_{\bs\bs\Omega_{s}}\phi_{ej,-}^{DG}\left.\bm{\Phi}\right|_{-}\,\mbox{d}\Omega_{s}+\stretchint{4ex}_{\bs\bs\Omega_{s}}\phi_{ej,+}^{DG}\left.\bm{\Phi}\right|_{+}\,\mbox{d}\Omega_{s}\right)\right]\\
&=&-\mathlarger{\sum}_{j=1}^{N_{fp}}\mathbf{\llbracket u \rrbracket}_{ej}\cdot\left(\stretchint{5ex}_{\bs\bs\Omega_{-}}\left(\dfrac{F_{s,-}^{e}}{2}\phi_{ej,-}^{DG}\left.\bm{\Phi}\right|_{-}\right)\,\mbox{d}\Omega_{-}+\stretchint{5ex}_{\bs\bs\Omega_{+}}\left(\dfrac{F_{s,+}^{e}}{2}\phi_{ej,+}^{DG}\left.\bm{\Phi}\right|_{+}\right)\,\mbox{d}\Omega_{+}\right)\\
&=&-\stretchint{7ex}_{\bs\bs\Omega_{e}}\left[\mathlarger{\sum}_{j=1}^{N_{fp}}\left(\dfrac{F_{s,-}^{e}}{2}\phi_{ej,-}^{DG}\mathbf{\llbracket u \rrbracket}_{ej}\cdot\bm{\Phi}\left.\chi\right|_{-}+\dfrac{F_{s,+}^{e}}{2}\phi_{ej,+}^{DG}\mathbf{\llbracket u \rrbracket}_{ej}\cdot\bm{\Phi}\left.\chi\right|_{+}\right)\right]\,\mbox{d}\Omega_{e}\\
&=&-\stretchint{7ex}_{\bs\bs\Omega_{e}}\left(\mathlarger{\sum}_{j=1}^{N_{fp}}\mathbf{\llbracket u \rrbracket}_{ej}\left[\dfrac{F_{s,-}^{e}}{2}\phi_{ej,-}^{DG}\left(\mathcal{M}_{e}^{-1}\left(\mathbf{x}\right)\right)\left.\chi\right|_{-}\left(\mathbf{x}\right)+\dfrac{F_{s,+}^{e}}{2}\phi_{ej,+}^{DG}\left(\mathcal{M}_{e}^{-1}\left(\mathbf{x}\right)\right)\left.\chi\right|_{+}\left(\mathbf{x}\right)\right]\right)\cdot\bm{\Phi}\,\mbox{d}\Omega_{e}
\end{array}
\end{equation}
Shifting the right-hand side of the previous equation into the left, we can gather the various terms under the integrals and factor out the test function $\bm{\Phi}$. We finally obtain expression~\eqref{eq:formula re}.
\end{proof}

\begin{cor}\label{cor:formula theta e2}
Employing Gauss-Legendre quadratures, the term $\Theta_{e,2}$ defined in equation~\eqref{eq:theta dif BR2} can be computed exactly as,
\begin{equation}\label{eq: formula theta e2}
\Theta_{e,2}=-\left|J^{e}\right|\mathlarger{\sum}_{i=1}^{N_{fp}}\llbracket u\rrbracket_{ei}s\omega_{i}\left[\mathlarger{\sum}_{j=1}^{N_{fp}}\llbracket u\rrbracket_{ej}\left(\dfrac{F_{s,-}^{e}}{4}\psi_{ej,-}^{DG}\left(\mathbf{r}_{i}^{e}\right)+\dfrac{F_{s,+}^{e}}{4}\psi_{ej,+}^{DG}\left(\mathbf{r}_{i}^{e}\right)\right)\right].
\end{equation}
\end{cor}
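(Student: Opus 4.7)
My plan is to prove Corollary~\ref{cor:formula theta e2} by direct substitution of the formula for $\mathbf{r^{e}}$ obtained in Theorem~\ref{thm:formula re} into the definition of $\Theta_{e,2}$ given in~\eqref{eq:theta dif BR2}, followed by an exact Gauss-Legendre quadrature argument analogous to the one used for $\Theta_{e,1}$ in Lemma~\ref{lem:theta e1 expansion} and for the IP penalty term in Lemma~\ref{lem:theta e2 GL quad}.

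First, I would compute the mean $\{\{\mathbf{r^{e}}(\mathbf{\llbracket u \rrbracket})\}\}$ on $\Gamma_{e}$. From the formula~\eqref{eq:formula re}, the restriction of $\mathbf{r^{e}}$ to $\Omega_{-}$ is the $\phi^{DG}_{ej,-}$ sum weighted by $F_{s,-}^{e}/2$, while its restriction to $\Omega_{+}$ is the analogous $\phi^{DG}_{ej,+}$ sum weighted by $F_{s,+}^{e}/2$ (the characteristic functions $\chi|_{\pm}$ isolate each contribution). Taking the arithmetic average across the edge therefore yields
\begin{equation*}
\{\{\mathbf{r^{e}}(\mathbf{\llbracket u \rrbracket})\}\}=-\tfrac{1}{2}\sum_{j=1}^{N_{fp}}\mathbf{\llbracket u \rrbracket}_{ej}\left[\tfrac{F_{s,-}^{e}}{2}\phi^{DG}_{ej,-}+\tfrac{F_{s,+}^{e}}{2}\phi^{DG}_{ej,+}\right],
\end{equation*}
evaluated at the corresponding points of $\Gamma_{s,e}$ under $\mathcal{M}_{e}^{-1}$.

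Next, I would insert this expression into $\Theta_{e,2}=\int_{\Gamma_{e}}s\,\{\{\mathbf{r^{e}}\}\}\cdot\mathbf{\llbracket u \rrbracket}_{e}\,\mbox{d}\Gamma_{e}$. Using that the jump vectors satisfy $\mathbf{\llbracket u \rrbracket}_{ej}\cdot\mathbf{\llbracket u \rrbracket}_{e}=\llbracket u\rrbracket_{ej}\llbracket u\rrbracket_{e}$ (because both vectors share the same normal direction along the edge), and transforming the integral to the reference edge via the edge Jacobian, I obtain
\begin{equation*}
\Theta_{e,2}=-\tfrac{s\,|J^{e}|}{2}\int_{\Gamma_{s,e}}\llbracket u\rrbracket_{e}\sum_{j=1}^{N_{fp}}\llbracket u\rrbracket_{ej}\left[\tfrac{F_{s,-}^{e}}{2}\phi^{DG}_{ej,-}+\tfrac{F_{s,+}^{e}}{2}\phi^{DG}_{ej,+}\right]\mbox{d}\Gamma_{s,e}.
\end{equation*}

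Finally, I would invoke the polynomial-degree argument already used in Lemmas~\ref{lem:theta e1 expansion} and~\ref{lem:theta e2 GL quad}: since $u_{n}\in P_{p}(\Omega_{s})$ and $\phi^{DG}_{ej,\pm}\in P_{p}(\Omega_{s})$, their restrictions to $\Gamma_{s,e}$ lie in $R_{p}(\Gamma_{s})$, so the integrand is a one-dimensional polynomial of degree at most $2p$ on $\Gamma_{s,e}$. The $N_{fp}=p+1$ Gauss-Legendre rule is therefore exact, and collecting the factors of $1/2$ produces the $1/4$ appearing in the claimed formula, giving exactly~\eqref{eq: formula theta e2}. There is no real obstacle beyond bookkeeping; the only slightly delicate point is to keep track of the sign from~\eqref{eq:formula re} and the consistent choice of normal direction when simplifying $\mathbf{\llbracket u \rrbracket}_{ej}\cdot\mathbf{\llbracket u \rrbracket}_{e}$, after which the result follows immediately. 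The notational change from $\phi^{DG}$ to $\psi^{DG}$ between Theorem~\ref{thm:formula re} and the corollary should be understood as referring to the same DG correction field.
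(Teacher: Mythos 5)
Your proposal is correct and follows essentially the same route as the paper: substitute the expression for $\mathbf{r^{e}}$ from Theorem~\ref{thm:formula re} into the definition of $\Theta_{e,2}$, take the average across the edge to produce the $F_{s,\pm}^{e}/4$ factors, reduce the vector jumps to scalar products, and conclude by exactness of the $N_{fp}$-point Gauss-Legendre rule for the degree-$2p$ integrand on $\Gamma_{s,e}$. Your reading of $\psi^{DG}$ and $\phi^{DG}$ as the same DG correction field is also consistent with the paper's own (slightly inconsistent) notation.
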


\begin{proof}
We apply the result of Theorem~\ref{thm:formula re} on equation~\eqref{eq:theta dif BR2} and then transform the integral from the physical domain to the computational domain,
\begin{equation}
\begin{array}{lll}
\Theta_{e,2}&=&\stretchint{5ex}_{\bs \bs \Gamma_{e}}s\{\{\mathbf{r^{e}}\left(\mathbf{\llbracket u\rrbracket}\right)\}\}\cdot \mathbf{\llbracket u\rrbracket} \mbox{d}\Gamma_{e}\\
&=&-\left|J^{e}\right|\Bigg[\stretchint{5ex}_{\bs \bs \Gamma_{s,e}}\left(s\mathbf{\llbracket u \rrbracket}\cdot\left(\mathlarger{\sum}_{j=1}^{N_{fp}}\mathbf{\llbracket u \rrbracket}_{ej}\dfrac{F_{s,-}^{e}}{4}\phi_{ej,-}^{DG}\left(\mathbf{r}\right)\right)\right)\,\mbox{d}\Gamma_{s,e}\\
&&+\stretchint{5ex}_{\bs\bs\Gamma_{s,e}}\left(s\mathbf{\llbracket u \rrbracket}\cdot\left(\mathlarger{\sum}_{j=1}^{N_{fp}}\mathbf{\llbracket u \rrbracket}_{ej}\dfrac{F_{s,+}^{e}}{4}\phi_{ej,+}^{DG}\left(\mathbf{r}\right)\right)\right)\,\mbox{d}\Gamma_{s,e}\Bigg].
\end{array}
\end{equation}

Both $\mathbf{\llbracket u \rrbracket}$ and $\phi_{ej}^{DG}$ are polynomials of degree less or equal to $p$. We compute exactly the integrals using Gauss-Legendre quadratures to obtain,

\begin{equation}
\begin{array}{lll}
\Theta_{e,2}&=&-\left|J^{e}\right|\Bigg[\mathlarger{\sum}_{i=1}^{N_{fp}}\left(s\llbracket u \rrbracket_{ei}\omega_{i}\left(\mathlarger{\sum}_{j=1}^{N_{fp}}\llbracket u \rrbracket_{ej}\dfrac{F_{s,-}^{e}}{4}\phi_{ej,-}^{DG}\left(\mathbf{r}_{i}^{e})\right)\right)\right)\\
&&+\mathlarger{\sum}_{i=1}^{N_{fp}}\left(s\llbracket u \rrbracket_{ei}\omega_{i}\left(\mathlarger{\sum}_{j=1}^{N_{fp}}\llbracket u \rrbracket_{ej}\dfrac{F_{s,+}^{e}}{4}\phi_{ej,+}^{DG}\left(\mathbf{r}_{i}^{e})\right)\right)\right)\Bigg]\\
&=&-\left|J^{e}\right|\mathlarger{\sum}_{i=1}^{N_{fp}}\llbracket u\rrbracket_{ei}s\omega_{i}\left[\mathlarger{\sum}_{j=1}^{N_{fp}}\llbracket u\rrbracket_{ej}\left(\dfrac{F_{s,-}^{e}}{4}\psi_{ej,-}^{DG}\left(\mathbf{r}_{i}^{e}\right)+\dfrac{F_{s,+}^{e}}{4}\psi_{ej,+}^{DG}\left(\mathbf{r}_{i}^{e}\right)\right)\right].
\end{array}
\end{equation}
\end{proof}

\begin{thm}\label{thm:tau BR2}
Employing the BR2 scheme for the diffusion equation with the ESFR methods, for all edges $e$ and for all flux points $i$, $s_{ei}$ greater than $s_{ei}^{*}$ implies the energy stability of the solution, with
\begin{equation}\label{eq:criterion BR2 triangle}
\begin{array}{lll}
s_{ei}^{*}&=&\min\limits_{\kappa}\mathlarger{\sum}_{k}\left(\dfrac{\psi_{ei,k}\left(\mathbf{r}_{i}^{e}\right)-\left|\psi_{ei,k}\left(\mathbf{r}_{i}^{e}\right)\right|+\mathlarger{\sum}_{f=1}^{3}\dfrac{F_{s,k}^{f}\left|\mathbf{n}\cdot\mathbf{n}^{f}\right|}{2F_{s,k}^{e}}\left(\mathlarger{\sum}_{j=1}^{N_{fp}}\left(\left|\psi_{fj,k}\left(\mathbf{r}_{i}^{e}\right)\right|+\dfrac{\omega_{j}}{\omega_{i}}\left|\psi_{ei,k}\left(\mathbf{r}_{j}^{f}\right)\right|\right)\right)}{\psi_{ei,k}^{DG}\left(\mathbf{r}_{i}^{e}\right)+\left|\psi_{ei,k}^{DG}\left(\mathbf{r}_{i}^{e}\right)\right|-\dfrac{1}{2}\mathlarger{\sum}_{j=1}^{N_{fp}}\left(\left|\psi_{ej,k}^{DG}\left(\mathbf{r}_{i}^{e}\right)\right|+\dfrac{\omega_{j}}{\omega_{i}}\left|\psi_{ei,k}^{DG}\left(\mathbf{r}_{j}^{e}\right)\right|\right)}\right).
\end{array}
\end{equation}
where $k=\left\lbrace -,+\right\rbrace$, $k=-$ signifies interior to triangle $IJK$, and $k=+$ denotes triangle $IJK_{+1}$.
\end{thm}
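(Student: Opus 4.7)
The strategy closely parallels the proof of Theorem~\ref{thm:tau IP}, the key difference being that the penalty contribution $\Theta_{e,2}$ is no longer a diagonal quadratic form $-\tau\llbracket u\rrbracket^2$ but a full matrix coming from the BR2 lifting operator, captured by Corollary~\ref{cor:formula theta e2}.

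Starting from the decomposition~\eqref{eq:theta dif BR2}, the bulk interface term $\Theta_{dif,1}=\sum_e\Theta_{e,1}$ coincides with the IP case. The derivation of Lemma~\ref{lem:theta e1 expansion} together with the triangular-inequality consolidation and the gathering of contributions from neighbouring edges (the bookkeeping illustrated in Figure~\ref{fig:jump edges 2D}) therefore transfers verbatim and yields the bound~\eqref{eq:final expression theta dif,1}. Denoting the coefficient of $|J^e|\,\omega_i\llbracket u\rrbracket_{ei}^2/4$ on the right-hand side of~\eqref{eq:final expression theta dif,1} by $\sum_k F_{s,k}^e\, M_{ei,k}$, one verifies that $M_{ei,k}$ is precisely the bracketed numerator appearing in~\eqref{eq:criterion BR2 triangle}.

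Next, I would expand $\Theta_{dif,2}$ using Corollary~\ref{cor:formula theta e2}. The resulting double sum is a quadratic form in the jumps $\llbracket u\rrbracket_{ej}$ with coefficients $\tfrac14 F_{s,k}^e \psi_{ej,k}^{DG}(\mathbf{r}_i^e)$ that are not sign-definite. I would isolate the diagonal $j=i$ contribution and bound the off-diagonal cross terms using the Young-type inequality $\llbracket u\rrbracket_{ei}\llbracket u\rrbracket_{ej}\geq -\tfrac12(\llbracket u\rrbracket_{ei}^2+\llbracket u\rrbracket_{ej}^2)$, taking absolute values on the coefficients to absorb their sign. The same index-relabeling manipulation performed in~\eqref{eq:triang derivations ini}--\eqref{eq:triang derivations fin} then consolidates every term as a coefficient of $\llbracket u\rrbracket_{ei}^2$. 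On the diagonal, the sign ambiguity of $\psi_{ei,k}^{DG}(\mathbf{r}_i^e)$ is handled by the decomposition $\psi=\tfrac12(\psi+|\psi|)+\tfrac12(\psi-|\psi|)$, keeping the non-negative piece as a lower bound and discarding the non-positive remainder. The result takes the form
\begin{equation*}
-\Theta_{dif,2}\;\geq\;\sum_{e=1}^{N_e}|J^e|\,s\sum_{i=1}^{N_{fp}}\omega_i\llbracket u\rrbracket_{ei}^2\sum_k \tfrac{F_{s,k}^e}{4}\,D_{ei,k},
\end{equation*}
where $D_{ei,k}$ coincides with the denominator in~\eqref{eq:criterion BR2 triangle}.

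To close the proof, I impose $\Theta_{dif,1}+\Theta_{dif,2}\leq 0$ edge by edge and flux point by flux point. Requiring the coefficient of $\llbracket u\rrbracket_{ei}^2$ to be non-positive and balancing the $\sum_k$ contributions side by side ($k=-$ and $k=+$ separately) produces $s_{ei}\geq M_{ei,k}/D_{ei,k}$ for each side, where the $F_{s,k}^e$ prefactors cancel between the numerator bound and the $F_{s,k}^e/4$ appearing in the lower bound on $-\Theta_{dif,2}$. Summing the two per-side ratios provides a sufficient condition that matches~\eqref{eq:criterion BR2 triangle}. Finally, Postulate~\ref{pos:independent} guarantees that the right-hand side is $\kappa$-independent, legitimizing the minimization over $\kappa$. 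The main obstacle will be the sign bookkeeping for $\psi_{ej,k}^{DG}(\mathbf{r}_i^e)$: the diagonal split $\psi=\tfrac12(\psi+|\psi|)+\tfrac12(\psi-|\psi|)$ must be aligned exactly with the Young-inequality contributions from the off-diagonal terms so as to reproduce the $\psi+|\psi|-\tfrac12\sum_j(\cdot)$ structure of $D_{ei,k}$; a secondary concern is verifying that $D_{ei,k}>0$ for the admissible ESFR correction fields so that $s_{ei}^*$ is finite and the criterion is informative.
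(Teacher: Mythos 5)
Your proposal is correct and follows essentially the same route as the paper: reuse of the IP bound~\eqref{eq:final expression theta dif,1} for $\Theta_{dif,1}$, expansion of $\Theta_{e,2}$ via Corollary~\ref{cor:formula theta e2}, the same Young-type inequality with the index-relabeling consolidation of~\eqref{eq:triang derivations ini}--\eqref{eq:triang derivations fin}, a per-flux-point sign condition on the coefficient of $\llbracket u\rrbracket_{ei}^2$ summed over $k$, and $\kappa$-minimization justified by Postulate~\ref{pos:independent}. The only point to make explicit is the assumption $s\geq 0$ (which the paper states in order to replace $\left|s\right|$ by $s$ and factor it out); your diagonal split $\psi=\tfrac12\left(\psi+\left|\psi\right|\right)+\tfrac12\left(\psi-\left|\psi\right|\right)$ is just a repackaging of the paper's step of removing the $j\neq i$ exclusion and compensating with $-\left|s_{ei}\right|\left|\psi_{ei,k}^{DG}\left(\mathbf{r}_{i}^{e}\right)\right|$, and your closing concern about the positivity of the denominator $D_{ei,k}$ is legitimate but is likewise left implicit in the paper.
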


\begin{proof}
The expression of $\Theta_{e2}$ in Corollary~\ref{cor:formula theta e2} is similar to the second part of the first term in~\eqref{eq:theta 1 similar}. Similarly, we apply the triangular inequality,
\begin{equation}
\begin{array}{lll}
\Theta_{e,2}&\leq&\left|J^{e}\right|\dfrac{F_{s,-}^{e}}{4}\mathlarger{\sum}_{i=1}^{N_{fp}}\left[\llbracket u\rrbracket_{ei}^{2}\left(-s_{ei}\omega_{i}\psi_{ei,-}^{DG}\left(\mathbf{r}_{i}^{e}\right)+\dfrac{\left|s_{ei}\right|}{2}\mathlarger{\sum}_{\substack{j=1\\j\neq i}}^{N_{fp}}\left(\omega_{i}\left|\psi_{ej,-}^{DG}\left(\mathbf{r}_{i}^{e}\right)\right|+\omega_{j}\left|\psi_{ei,-}^{DG}\left(\mathbf{r}_{j}^{e}\right)\right|\right)\right)\right]\\
&&+\left|J^{e}\right|\dfrac{F_{s,+}^{e}}{4}\mathlarger{\sum}_{i=1}^{N_{fp}}\left[\llbracket u\rrbracket_{ei}^{2}\left(-s_{ei}\omega_{i}\psi_{ei,+}^{DG}\left(\mathbf{r}_{i}^{e}\right)+\dfrac{\left|s_{ei}\right|}{2}\mathlarger{\sum}_{\substack{j=1\\j\neq i}}^{N_{fp}}\left(\omega_{i}\left|\psi_{ej,+}^{DG}\left(\mathbf{r}_{i}^{e}\right)\right|+\omega_{j}\left|\psi_{ei,+}^{DG}\left(\mathbf{r}_{j}^{e}\right)\right|\right)\right)\right],
\end{array}
\end{equation}
where the last term of the two lines are obtained via similar derivations performed in the IP section (from equation~\eqref{eq:triang derivations ini} up to~\eqref{eq:triang derivations fin}).
We then combine the previous inequality with inequality~\eqref{eq:final expression theta dif,1}. We can further simplify the expression by removing the exclusion of $j\neq i$ ( and retrieve an additional term$-\left|s_{ei}\right|\left|\psi_{ei,k}^{DG}\left(\mathbf{r}_{i}^{e}\right)\right|$) and introduce the parameter $k=\left\lbrace-,+\right\rbrace$ to signify the summation of the terms across the edge to yield,
\begin{equation}
\begin{array}{lll}
\Theta_{dif}&\leq&\dfrac{\left|J^{e}\right|}{4}\mathlarger{\mathlarger{\sum}}_{e=1}^{N_{e}}\mathlarger{\mathlarger{\sum}}_{i=1}^{N_{fp}}\Bigg[\llbracket u\rrbracket_{ei}^{2}\Bigg(\mathlarger{\sum}_{k}\bigg[F_{s,k}^{e}\omega_{i}\left(\psi_{ei,k}\left(\mathbf{r}_{i}^{e}\right)-\left|\psi_{ei,k}\left(\mathbf{r}_{i}^{e}\right)\right|-s_{ei}\psi_{ei,k}^{DG}\left(\mathbf{r}_{i}^{e}\right)-\left|s_{ei}\right|\left|\psi_{ei,k}^{DG}\left(\mathbf{r}_{i}^{e}\right)\right|\right)\\
&&+\dfrac{F_{s,k}^{e}\left|s_{ei}\right|}{2}\mathlarger{\sum}_{j=1}^{N_{fp}}\left(\omega_{i}\left|\psi_{ej,k}^{DG}\left(\mathbf{r}_{i}^{e}\right)\right|+\omega_{j}\left|\psi_{ei,k}^{DG}\left(\mathbf{r}_{j}^{e}\right)\right|\right)\\
&&+\mathlarger{\sum}_{f=1}^{3}\mathlarger{\sum}_{j=1}^{N_{fp}}\left(\dfrac{F_{s,k}^{f}\left|\mathbf{n}\cdot\mathbf{n}_{f}\right|}{2}\left(\omega_{i}\left|\psi_{fj,k}\left(\mathbf{r}_{i}^{e}\right)\right|+\left|\psi_{ei,k}\left(\mathbf{r}_{j}^{f}\right)\right|\right)\right)\bigg]\Bigg)\Bigg].
\end{array}
\end{equation}

One minor problem from the previous equation is that we are unable to factor out the parameter $s$ as some of the terms are multiplied by $\left|s\right|$. This can be resolved by making the assumption that the parameter $s$ is non-negative, $s\geq 0$. This assumption does not pose a problem for the stability proof as: if $s$ was negative then by taking it to be positive only adds more dissipation and ensures the stability of the scheme. 

In order to have $\Theta_{dif}\leq 0$, we require $s_{ei}\geq s_{ei}^{*} \, \forall e\in\llbracket 1,N_{e}\rrbracket\,\forall i\in\llbracket 1,N_{fp}\rrbracket$, where $s_{ei}^{*}$ is defined as
\begin{equation}
\begin{array}{lll}
s_{ei}^{*}&=&\mathlarger{\sum}_{k}\left(\dfrac{\psi_{ei,k}\left(\mathbf{r}_{i}^{e}\right)-\left|\psi_{ei,k}\left(\mathbf{r}_{i}^{e}\right)\right|+\mathlarger{\sum}_{f=1}^{3}\dfrac{F_{s,k}^{f}\left|\mathbf{n}\cdot\mathbf{n}^{f}\right|}{2F_{s,k}^{e}}\mathlarger{\sum}_{j=1}^{N_{fp}}\left(\left|\psi_{fj,k}\left(\mathbf{r}_{i}^{e}\right)\right|+\dfrac{\omega_{j}}{\omega_{i}}\left|\psi_{ei,k}\left(\mathbf{r}_{j}^{f}\right)\right|\right)}{\psi_{ei,k}^{DG}\left(\mathbf{r}_{i}^{e}\right)+\left|\psi_{ei,k}^{DG}\left(\mathbf{r}_{i}^{e}\right)\right|-\dfrac{1}{2}\mathlarger{\sum}_{j=1}^{N_{fp}}\left(\left|\psi_{ej,k}^{DG}\left(\mathbf{r}_{i}^{e}\right)\right|+\dfrac{\omega_{j}}{\omega_{i}}\left|\psi_{ei,k}^{DG}\left(\mathbf{r}_{j}^{e}\right)\right|\right)}\right).
\end{array}
\end{equation}

We showed through Postulate~\ref{pos:independent} that the problem is independent of $\kappa$. As a consequence, the energy stability is also independent of $\kappa$. Minimizing the previous equation results in criterion~\eqref{eq:criterion BR2 triangle}.

\end{proof}

\begin{rmk}
The minimization of $s^{*}$ with respect to $\kappa$ yields similar results to Figure~\ref{fig:plot tau minimization} and are hence omitted. In the rest of the article, $s^{*}_{theory}$ is computed with equation~\eqref{eq:criterion BR2 triangle} with $\kappa_{+}$.
\end{rmk}

\subsection{Numerical results}\label{sec:Numerical results BR2 2D}

In order to validate the previous result, we conduct numerical simulations to find the minimum penalty term $s_{numerical}^{*}$ that ensures stability. We consider the same problem and parameters as in Section~\ref{sec:Numerical results IP 2D}.

\begin{table}[H]
\centering
\setlength\tabcolsep{4pt}
\begin{minipage}{0.45\textwidth}
\centering
\resizebox{\textwidth}{!}{
\begin{tabular}{|c||c|c||c|c|}
\hline
$p$ & \multicolumn{2}{c||}{2} & \multicolumn{2}{c|}{3}\\
\hline
\bf \backslashbox{$c$}{$\kappa$} & $\kappa_{DG}$ & $\kappa_{+}$ & $\kappa_{DG}$ & $\kappa_{+}$\\
\hline
$\max\left(\tau_{theory}^{*}\right)$ & 1.82 & 1.82 & 2.51 & 2.51\\
\hline
\hline
$c_{DG}$ & 0.67 & 0.67 & 0.76 & 0.76\\
\hline
$c_{+}$ & 0.53 & 0.53 & 0.73 & 0.73\\
\hline
\end{tabular}
}
\caption{$s_{numerical}^{*}$ for the BR2 scheme for $p=2$ and $p=3$ for a $8\times 8\times 2$ mesh.}
\label{tab:result BR2 8 elem 2D trian}
\end{minipage}%
\hfill
\begin{minipage}{0.45\textwidth}
\centering
\resizebox{\textwidth}{!}{
\begin{tabular}{|c||c|c||c|c|}
\hline
$p$ & \multicolumn{2}{c||}{2} & \multicolumn{2}{c|}{3}\\
\hline
\bf \backslashbox{$c$}{$\kappa$} & $\kappa_{DG}$ & $\kappa_{+}$ & $\kappa_{DG}$ & $\kappa_{+}$\\
\hline
$\max\left(s_{theory}^{*}\right)$ & 1.82 & 1.82 & 2.51 & 2.51\\
\hline
\hline
$c_{DG}$ & 0.73 & 0.73 & 0.78 & 0.78\\
\hline
$c_{+}$ & 0.68 & 0.68 & 0.76 & 0.76\\
\hline
\end{tabular}
}
\caption{$s_{numerical}^{*}$ for the BR2 scheme for $p=2$ and $p=3$ for a $16\times 16\times 2$ mesh.}
\label{tab:result BR2 16 elem triangular 2D}
\end{minipage}
\end{table}

Both tables \ref{tab:result BR2 8 elem 2D trian} and \ref{tab:result BR2 16 elem triangular 2D} guarantee that $\max\left(s_{theory}\right)\geq s_{numerical}^{*}$. Similar to the IP results, these tables do not validate the criterion of~\eqref{eq:criterion BR2 triangle} for every edge and flux point but the results do not contradict our criterion.
\section{Von Neumann analysis}\label{sec:VN section}
This section presents a von Neumann analysis to study the maximal time step, $\Delta t_{max}$ of the different schemes.
\subsection{Maximal time step}
ESFR schemes offer a range of methods where the values of both $c$ and $\kappa$ dictate the amount of filtering or a relaxation of the highest modes of the DG correction fields~\cite{zwanenburg_equivalence_2016}. The purpose of this section is to present the methods with the highest time step. Castonguay et al.~\cite{castonguay_new_2012} performed the analysis for the advection equation and studied the influence of $c$ (Table~\ref{tab:c+ triangle}).  From section~\ref{sec:independent kappa}, we demonstrated that the problem is independent of $\kappa$. In this section, we will confirm this theoretical result and study the influence of $c$ and the penalty term ($\tau$ and $s$) of the numerical schemes. Let us consider a 2D periodic pattern, controled by the angle $\gamma$,  which forms the domain $\Omega$.

\begin{figure}[H]
\centering
\includegraphics[width=5in]{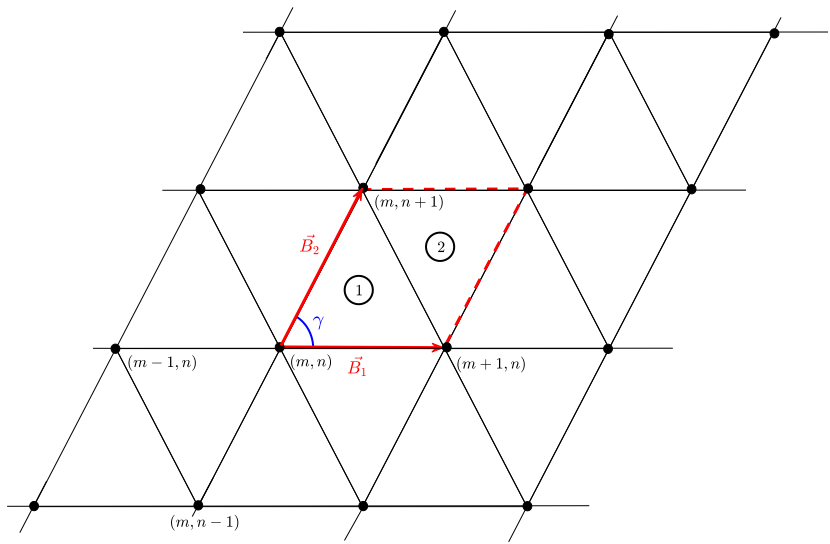}
\caption{Pattern used for the von Neumann analysis.}
\label{fig:pattern VN}
\end{figure}

Referring to Figure~\ref{fig:pattern VN}, the pattern is formed by two triangles, themselves parametrized by two vectors, $\vec{B_{1}}$ and $\vec{B_{2}}$. In the study considered, we chose $\vec{B_{1}}=\begin{pmatrix}
\Delta_{B}\\0
\end{pmatrix}$ and $\vec{B_{2}}=\begin{pmatrix}
\Delta_{B}\cos\left(\gamma\right)\\ \Delta_{B}\sin\left(\gamma\right)
\end{pmatrix}$. As shown by Castonguay \cite{castonguay_new_2012}, the shape of the pattern has an influence on $\Delta t_{max}$ hence two values of $\gamma$ will be studied ($60\degree$ and $90\degree$). We then create the column vector solution of the pattern element $\bar{u}_{mn}=\begin{pmatrix}
\bar{u}_{1}\\\bar{u}_{2}
\end{pmatrix}$, where $\bar{u}_{1}$ contains the discrete value on the $N_{p}$ nodes of the solution on the first triangle and $\bar{u}_{2}$ contains the discrete values of the solution on the second triangle, represented in Figure~\ref{fig:pattern VN}. Hence $\bar{u}_{mn}$ is a $2N_{p}\times 1$ vector. The solution points choosen for the analysis are the $\alpha$-optimized points.

We then nondimensionalize the partial differential equation~\eqref{eq:diffusion 2D} by introducing $\hat{x}=\frac{x}{\Delta_{B}}$, $\hat{y}=\frac{y}{\Delta_{B}}$, $\hat{t}=\frac{t\, b}{\left(\Delta_{B}\right)^{2}}$, 
\begin{equation}\label{eq:Nondim diffusion equation triangle}
\dfrac{\partial u}{\partial \hat{t}}=\left(\dfrac{\partial^{2}u}{\partial \hat{x}^{2}}+\dfrac{\partial^{2}u}{\partial \hat{y}^{2}}\right).
\end{equation}
Equation \eqref{eq:diffusion 2D} is recovered by taking $b=1$ and a unitary element size $\Delta_{B}=1$. We now rename the variable $\hat{x}$ and $\hat{y}$ by $x$ and $y$. We apply the FR procedure with the IP and BR2 numerical fluxes. Both schemes have compact stencils, where the solution requires information only from its closest neighbours. We write~\eqref{eq:Nondim diffusion equation triangle} in discrete form as
\begin{equation}
\dfrac{\mbox{d} \bar{u}_{mn}}{\mbox{d}t}=\left(\mathbf{A}\bar{u}_{m,n}+\mathbf{B}\bar{u}_{m,n+1}+\mathbf{C}\bar{u}_{m,n-1}+\mathbf{D}\bar{u}_{m-1,n}+\mathbf{E}\bar{u}_{m+1,n}\right),
\end{equation}
where, $\mathbf{A}$ corresponds to a matrix of size  $2N_{p}\times 2N_{p}$ taking $\bar{u}_{m,n}$ as argument to compute the Laplacian of $u_{n}$, and the same can be said of matrices $\mathbf{B}$, $\mathbf{C}$, $\mathbf{D}$ and $\mathbf{E}$ by taking the respective neighbours of the pattern $\left(m,n\right)$ as argument. Lowercase letter with a bar, $\bar{a}$ indicates a column vector containing the discrete values of a scalar quantity. Assuming a Bloch-wave solution~\cite{AINSWORTH2004106}, we have 
\begin{equation}
\bar{u}_{m,n}=e^{\iu\left|k\right|\left(x_{mn}\cos\left(\theta\right)+y_{m,n}\sin\left(\theta\right)\right)-\omega t}\bar{v},
\end{equation}
where $\vec{k}=\left|k\right|\begin{pmatrix}
\cos\left(\theta\right)\\ \sin\left(\theta\right)
\end{pmatrix}$ represents the prescribed wave vector, both $\left|k\right|$ and $\theta$ vary between $\left[0,2\pi\right]$, $\omega^{\delta}$ is the discrete frequency and $\bar{v}$ is a vector independent of the elements. The solution is periodic and hence we obtain,
\begin{equation}
\begin{array}{lll}
\bar{u}_{m,n+1}&=&e^{\iu\left|k\right|\left(\left(x_{mn}+\Delta_{B}\cos\left(\gamma\right)\right)\cos\left(\theta\right)+\left(y_{m,n}+\Delta_{B}\sin\left(\gamma\right)\right)\sin\left(\theta\right)\right)-\omega t}\bar{v}\\
&=&\bar{u}_{m,n}e^{\iu\left|k\right|\Delta_{B}\left(\cos\left(\gamma-\theta\right)\right)},
\end{array}
\end{equation}
and the quantities $\bar{u}_{m,n-1}$, $\bar{u}_{m-1,n}$ and $\bar{u}_{m+1,n}$ are calculated similarly. We finally obtain
\begin{equation}
\dfrac{\mbox{d} \bar{u}_{mn}}{\mbox{d}t}=\mathbf{S}\left(\left|k\right|,\theta\right)\bar{u}_{m,n},
\end{equation}
where $\mathbf{S}=\left(\mathbf{A}+\mathbf{B}e^{\iu\left|k\right|\Delta_{B}\cos\left(\gamma-\theta\right)}+\mathbf{C}e^{-\iu\left|k\right|\Delta_{B}\cos\left(\gamma-\theta\right)}+\mathbf{D}e^{-\iu\left|k\right|\Delta_{B}\cos\left(\theta\right)}+\mathbf{E}e^{\iu\left|k\right|\Delta_{B}\cos\left(\theta\right)}\right)$. 
The quantities $\omega^{\delta}$ and $\bar{v}$ can be computed from the eigenvalues and eigenvectors of $\mathbf{S}$.

We employ the fourth order five stage Runge-Kutta (RK54) method~\cite{carpenter_fourth-order_1994}. The column-vector of the solution at time $n+1$, $\bar{u}_{mn}^{t_{n+1}}$, can be expressed as,
\begin{equation}
\bar{u}_{mn}^{t_{n+1}}=\mathbf{M}\left(\left|k\right|,\theta,\Delta t\right)\bar{u}_{m,n}^{t_{n}},
\end{equation}
where, $\mathbf{M}$ is defined as
\begin{equation}
\mathbf{M}\left(k,\Delta t\right)=1+\Delta t\mathbf{S}\left(k\right)+\dfrac{1}{2!}\left(\Delta t\mathbf{S}\left(k\right)\right)^{2}+\dfrac{1}{3!}\left(\Delta t\mathbf{S}\left(k\right)\right)^{3}+\dfrac{1}{4!}\left(\Delta t\mathbf{S}\left(k\right)\right)^{4}+\dfrac{1}{200}\left(\Delta t\mathbf{S}\left(k\right)\right)^{5}.
\end{equation}

The matrix $\mathbf{M}$ depends on $\left|k\right|$, $\theta$ and $\Delta t$. Whereas $\left|k\right|$ and $\theta$ vary between $\left[0,2\pi\right]$, $\Delta t$ is choosen to ensure the stability of the scheme, i.e the moduli of the spectral radius of $\mathbf{M}$ must be less than 1. The algorithm resumes to: start at an initial $\Delta t_{0}$ sufficiently high to produce an unstable solution, then scan over the range of $\left|k\right|$ and $\theta$ to compute the highest eigenvalue: $\left|\lambda\right|_{max}$, decrease $\Delta t$ until $\left|\lambda\right|_{max}\leq1$. We apply the analysis for both the IP and BR2 schemes, where the value of $c$ is taken to be equal to $c_{DG}$ and $c_{+}$ (Table~\ref{tab:c+ triangle}), while two values of the penalty term are taken (either equal or 1.5 times the criterion found:~\eqref{eq:criterion IP 2D triangle} for the IP scheme and~\eqref{eq:criterion BR2 triangle} for the BR2 scheme).

\begin{table}[H]
\centering
\setlength\tabcolsep{4pt}
\begin{minipage}{0.45\textwidth}
\centering
\resizebox{\textwidth}{!}{
\begin{tabular}{|c|c||c|c||c|c|}
\hline
\multicolumn{2}{|c||}{$p$} &\multicolumn{2}{c||}{2} &\multicolumn{2}{c|}{3} \\
\hline
$c$ &$\kappa$ &$\tau_{theory}$ &$1.5\tau_{theory}$ &$\tau_{theory}$ &$1.5\tau_{theory}$ \\
\hline
\multirow{2}{*}{$c_{DG}$} &$\kappa_{DG}$&1.85e-02 &1.12e-02 &6.23e-03 &3.85e-03 \\
\cline{2-6}
&$\kappa_{+}$&1.85e-02 &1.12e-02 &6.23e-03 &3.85e-03 \\
\hline
\multirow{2}{*}{$c_{+}$} &$\kappa_{DG}$&2.74e-02 &1.76e-02 &8.22e-03 &5.20e-03 \\
\cline{2-6}
&$\kappa_{+}$&2.74e-02 &1.76e-02 &8.22e-03 &5.20e-03 \\
\hline
\end{tabular}
}
\caption{$\Delta t_{max}$ for triangles for the IP scheme for $\gamma=60\degree$.}
\label{tab:dt max triangle IP 60}

\end{minipage}%
\hfill
\begin{minipage}{0.45\textwidth}
\centering
\resizebox{\textwidth}{!}{
\begin{tabular}{|c|c||c|c||c|c|}
\hline
\multicolumn{2}{|c||}{$p$} &\multicolumn{2}{c||}{2} &\multicolumn{2}{c|}{3} \\
\hline
$c$ &$\kappa$ &$\tau_{theory}$ &$1.5\tau_{theory}$ &$\tau_{theory}$ &$1.5\tau_{theory}$ \\
\hline
\multirow{2}{*}{$c_{DG}$} &$\kappa_{DG}$&1.77e-02 &1.08e-02 &5.82e-03 &3.59e-03 \\
\cline{2-6}
&$\kappa_{+}$&1.77e-02 &1.08e-02 &5.82e-03 &3.59e-03 \\
\hline
\multirow{2}{*}{$c_{+}$} &$\kappa_{DG}$&2.64e-02 &1.69e-02 &7.83e-03 &4.93e-03 \\
\cline{2-6}
&$\kappa_{+}$&2.64e-02 &1.69e-02 &7.83e-03 &4.93e-03 \\
\hline
\end{tabular}
}
\caption{$\Delta t_{max}$ for triangles for the IP scheme for $\gamma=90\degree$.}
\label{tab:dt max triangle IP 90} 
\end{minipage}
\end{table}

\begin{table}[H]
\centering
\setlength\tabcolsep{4pt}
\begin{minipage}{0.45\textwidth}
\centering
\resizebox{\textwidth}{!}{
\begin{tabular}{|c|c||c|c||c|c|}
\hline
\multicolumn{2}{|c||}{$p$} &\multicolumn{2}{c||}{2} &\multicolumn{2}{c|}{3} \\
\hline
$c$ &$\kappa$ &$s_{theory}$ &$1.5 \,s_{theory}$ &$s_{theory}$ &$1.5 \,s_{theory}$ \\
\hline
\multirow{2}{*}{$c_{DG}$} &$\kappa_{DG}$&1.08e-02 &6.62e-03 &3.31e-03 &2.06e-03 \\
\cline{2-6}
&$\kappa_{+}$&1.08e-02 &6.62e-03 &3.31e-03 &2.06e-03 \\
\hline
\multirow{2}{*}{$c_{+}$} &$\kappa_{DG}$&1.56e-02 &1.02e-02 &4.13e-03 &2.66e-03 \\
\cline{2-6}
&$\kappa_{+}$&1.56e-02 &1.02e-02 &4.13e-03 &2.66e-03 \\
\hline
\end{tabular}
}
\caption{$\Delta t_{max}$ for triangles for the BR2 scheme for $\gamma=60\degree$.}
\label{tab:dt max triangle BR2 60}

\end{minipage}%
\hfill
\begin{minipage}{0.45\textwidth}
\centering
\resizebox{\textwidth}{!}{
\begin{tabular}{|c|c||c|c||c|c|}
\hline
\multicolumn{2}{|c||}{$p$} &\multicolumn{2}{c||}{2} &\multicolumn{2}{c|}{3} \\
\hline
$c$ &$\kappa$ &$s_{theory}$ &$1.5 \,s_{theory}$ &$s_{theory}$ &$1.5 \,s_{theory}$ \\
\hline
\multirow{2}{*}{$c_{DG}$} &$\kappa_{DG}$&1.00e-02 &6.09e-03 &2.81e-03 &1.72e-03 \\
\cline{2-6}
&$\kappa_{+}$&1.00e-02 &6.09e-03 &2.81e-03 &1.72e-03 \\
\hline
\multirow{2}{*}{$c_{+}$} &$\kappa_{DG}$&1.39e-02 &8.99e-03 &3.58e-03 &2.27e-03 \\
\cline{2-6}
&$\kappa_{+}$&1.39e-02 &8.99e-03 &3.58e-03 &2.27e-03 \\
\hline
\end{tabular}
}
\caption{$\Delta t_{max}$ for triangles for the BR2 scheme for $\gamma=90\degree$.}
\label{tab:dt max triangle BR2 90} 
\end{minipage}
\end{table}

Through all these tables, for both the IP and BR2 numerical fluxes, we observe, that the lower the value of the penalty term the higher the maximal time step. Conversely, increasing $c$ leads to higher maximal time steps. As expected, $\kappa$ has no influence on $\Delta t_{max}$. Comparing Tables~\ref{tab:dt max triangle IP 60} and \ref{tab:dt max triangle IP 90} for the IP scheme, we observe, the maximal time step is always higher for $\gamma=60\degree$ than $90\degree$. Indeed $\gamma=60\degree$ results in a domain where each element is an equilateral triangle. Therefore this domain is more regular than $\gamma=90\degree$ and it results in an increase of the maximal time step. Equivalent trends are observed for the BR2 scheme as shown in Tables \ref{tab:dt max triangle BR2 60} and \ref{tab:dt max triangle BR2 90}.

Comparing the BR2 and the IP schemes, we observe that the latter has the higher time step for every case. A likely explanation is that the criterion for the IP scheme~\eqref{eq:criterion IP 2D triangle} is sharper than the one for the BR2 scheme~\eqref{eq:criterion BR2 triangle}.

As expected , $\Delta t_{max}$ does not depend on $\kappa$.

\begin{figure}[H] 
\centering

\begin{subfigure}{0.49\textwidth}
\centering
\includegraphics[width=\linewidth]{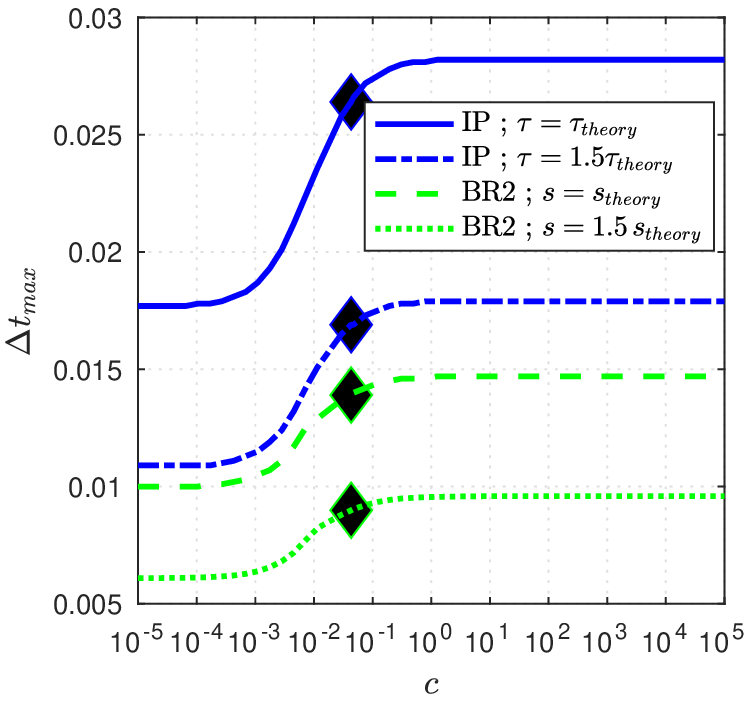}
\caption{$\Delta t_{max}$ for $p=2$.} \label{fig:dtmax DG p2 triangle}
\end{subfigure}\hspace*{\fill}
\begin{subfigure}{0.49\textwidth}
\centering
\includegraphics[width=\linewidth]{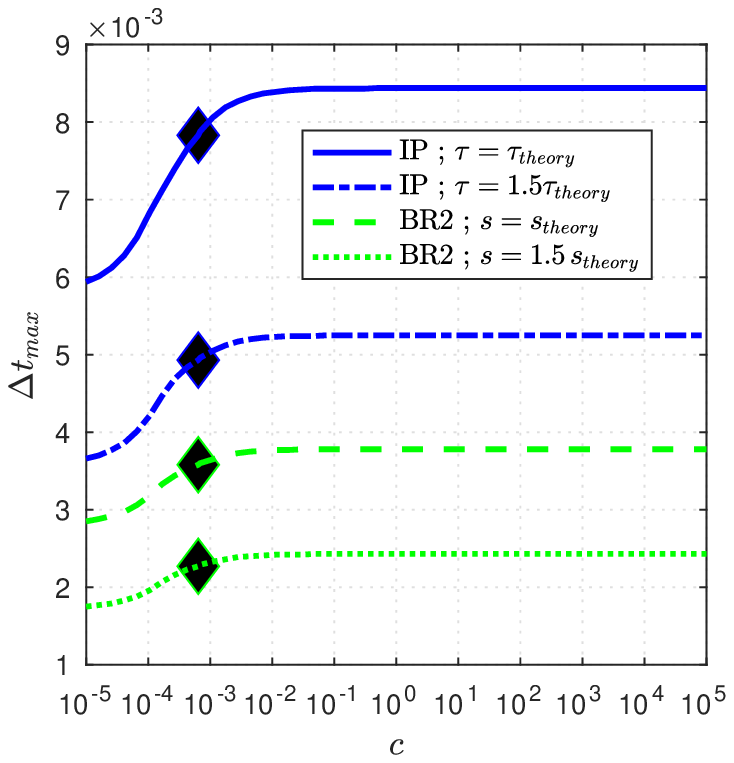}
\caption{$\Delta t_{max}$ for $p=3$.} \label{fig:dtmax DG p3 triangle}
\end{subfigure}
\caption{$\Delta t_{max}$ along $c$ for $\gamma=90\degree$ for $p$, the diamond marker represents $c=c_{+}$ ; using log scale the DG case couldn't be represented.}
\label{fig:plot dt max triangle}
\end{figure}

Figure~\ref{fig:plot dt max triangle} lays emphasis to the IP scheme providing a higher $\Delta t_{max}$ than the BR2 scheme. Moreover, we observe that by taking $c$ closer to $c_{+}$ provides for a higher time step. Therefore it would seem that $c = c_{+}$ for an advection-diffusion problem would lead to the highest $\Delta t_{max}$ possible.

The insights of the features of the ESFR method given by the von Neumann analysis is only true for regular meshes. However real CFD applications often use irregular grids due to geometric complexity and flow anisotropy. Thus we recommend against any generalization from this section.
\section{$L_{2}$ errors and order of accuracy}\label{sec:L2 errors section}

The criterion for both the IP and BR2 schemes have been mostly confirmed with the analysis in Sections \ref{sec:Numerical results IP 2D} and \ref{sec:Numerical results BR2 2D}. Our previous von Neumann analysis has enlightened methods which allow for a time step much higher than the classical DG method. In this section, we present the numerical $L_{2}$ errors and the order of convergence to verify that we get the expected OOA of $p+1$ for the different methods. The problem considered is the same as the one before. We consider a regular mesh (Figure~\ref{fig:regular mesh}) of size $N_{x}\times N_{x}\times 2$ where $N_{x}$ has been taken equal to 16, 32 and 64. We use two values for $\tau$ for the IP schemes: $\tau_{theory}$ given by~\eqref{eq:criterion IP 2D triangle} and $1.5 \tau_{theory}$ and two values for the BR2 schemes $s_{theory}$ given by~\eqref{eq:criterion BR2 triangle} and $1.5 s_{theory}$. The $L_{2}$-error is computed as 
\begin{equation}
L_{2}-\mbox{error}=\sqrt{\dfrac{\mathlarger{\sum}_{n=1}^{2N_{x}^{2}}\mathlarger{\sum}_{i=1}^{N_{p}}\left(u_{n,i}-u_{exact,n,i}\right)^{2}}{ 2N_{x}^{2} N_{p}}},
\end{equation}
where $u_{n,i}$ is the numerical solution evaluated on element $n$ at the solution point $i$ and $u_{exact,n,i}$ is the exact solution on the same element and solution point. The final time $t_{fin}$ was taken to be equal to 1.

The simulations were conducted for four methods: $c_{DG}/\kappa_{DG}$, $c_{DG}/\kappa_{+}$, $c_{+}/\kappa_{DG}$ and $c_{+}/\kappa_{+}$ where $c_{+}=\kappa_{+}$ is taken equal to the second column of Table~\ref{tab:c+ triangle}. We provided the maximal time step for $N_{x}=16$ elements. Maximal time steps were evaluated through an iterative approach while ensuring the solution remains bounded at $t=2$.

\begin{table}[H]
\centering
\resizebox{\textwidth}{!}{
\begin{tabular}{|c|c||c|c|c|ccc|c||c|c|c|ccc|c|}
\hline 
& &\multicolumn{7}{l||}{$\tau_{theory}$} &\multicolumn{7}{l|}{$1.5\tau_{theory}$} \\
\hline
$c$ &$\kappa$ &$N_{x}$=16 &$N_{x}$=32&$N_{x}=$64 &\multicolumn{3}{c|}{OOA} &$\Delta t_{max}$&$N_{x}$=16 &$N_{x}$=32&$N_{x}$=64 &\multicolumn{3}{c|}{OOA} &$\Delta t_{max}$ \\
\hline
\multirow{2}{*}{$c_{DG}$} &$\kappa_{DG}$&1.16e-04 &1.45e-05 &1.81e-06 &-&3.00 &3.00 &2.70e-03 &8.91e-05 &1.12e-05 &1.40e-06 &-&3.00 &3.00 &1.70e-03 \\
\cline{2-16}
&$\kappa_{+}$&1.16e-04 &1.45e-05 &1.81e-06 &-&3.00 &3.00 &2.70e-03 &8.91e-05 &1.12e-05 &1.40e-06 &-&3.00 &3.00 &1.70e-03 \\
\hline
\multirow{2}{*}{$c_{+}$} &$\kappa_{DG}$&1.26e-04 &1.48e-05 &1.82e-06 &-&3.09 &3.02 &4.10e-03 &9.63e-05 &1.14e-05 &1.40e-06 &-&3.08 &3.02 &2.60e-03 \\
\cline{2-16}
&$\kappa_{+}$&1.26e-04 &1.48e-05 &1.82e-06 &-&3.09 &3.02 &4.10e-03 &9.63e-05 &1.14e-05 &1.40e-06 &-&3.08 &3.02 &2.60e-03\\
\hline
\end{tabular}
}
\caption{$L_{2}$ errors using the IP scheme for $p=2$ for triangles.}
\label{L2 errors IP p2 tri}
\end{table}

\begin{table}[H]
\centering
\resizebox{\textwidth}{!}{
\begin{tabular}{|c|c||c|c|c|ccc|c||c|c|c|ccc|c|}
\hline 
& &\multicolumn{7}{l||}{$s_{theory}$} &\multicolumn{7}{l|}{$1.5s_{theory}$} \\
\hline
$c$ &$\kappa$ &$N_{x}$=16 &$N_{x}$=32&$N_{x}=$64 &\multicolumn{3}{c|}{OOA} &$\Delta t_{max}$&$N_{x}$=16 &$N_{x}$=32&$N_{x}$=64 &\multicolumn{3}{c|}{OOA} &$\Delta t_{max}$ \\
\hline
\multirow{2}{*}{$c_{DG}$} &$\kappa_{DG}$&9.57e-05 &1.20e-05 &1.50e-06 &-&2.99 &3.00 &1.50e-03 &8.22e-05 &1.03e-05 &1.28e-06 &-&3.00 &3.00 &9.50e-04 \\
\cline{2-16}
&$\kappa_{+}$&9.57e-05 &1.20e-05 &1.50e-06 &-&2.99 &3.00 &1.50e-03 &8.22e-05 &1.03e-05 &1.28e-06 &-&3.00 &3.00 &9.50e-04 \\
\hline
\multirow{2}{*}{$c_{+}$} &$\kappa_{DG}$&1.03e-04 &1.22e-05 &1.51e-06 &-&3.07 &3.02 &2.10e-03 &8.77e-05 &1.04e-05 &1.29e-06 &-&3.07 &3.02 &1.40e-03 \\
\cline{2-16}
&$\kappa_{+}$&1.03e-04 &1.22e-05 &1.51e-06 &-&3.07 &3.02 &2.10e-03 &8.77e-05 &1.04e-05 &1.29e-06 &-&3.07 &3.02 &1.40e-03\\
\hline
\end{tabular}
}
\caption{$L_{2}$ errors using the BR2 scheme for $p=2$ for triangles.}
\label{L2 errors BR2 p2 tri}
\end{table}

\begin{table}[H]
\centering
\resizebox{\textwidth}{!}{
\begin{tabular}{|c|c||c|c|c|ccc|c||c|c|c|ccc|c|}
\hline 
& &\multicolumn{7}{l||}{$\tau_{theory}$} &\multicolumn{7}{l|}{$1.5\tau_{theory}$} \\
\hline
$c$ &$\kappa$ &$N_{x}$=16 &$N_{x}$=32&$N_{x}=$64 &\multicolumn{3}{c|}{OOA} &$\Delta t_{max}$&$N_{x}$=16 &$N_{x}$=32&$N_{x}$=64 &\multicolumn{3}{c|}{OOA} &$\Delta t_{max}$ \\
\hline
\multirow{2}{*}{$c_{DG}$} &$\kappa_{DG}$&3.94e-06 &2.43e-07 &1.51e-08 &-&4.02 &4.01 &9.10e-04 &3.41e-06 &2.14e-07 &1.34e-08 &-&4.00 &4.00 &5.60e-04 \\
\cline{2-16}
&$\kappa_{+}$&3.94e-06 &2.43e-07 &1.51e-08 &-&4.02 &4.01 &9.10e-04 &3.41e-06 &2.14e-07 &1.34e-08 &-&4.00 &4.00 &5.60e-04 \\
\hline
\multirow{2}{*}{$c_{+}$} &$\kappa_{DG}$&3.92e-06 &2.42e-07 &1.51e-08 &-&4.02 &4.00 &1.20e-03 &3.45e-06 &2.14e-07 &1.34e-08 &-&4.01 &4.00 &7.70e-04 \\
\cline{2-16}
&$\kappa_{+}$&3.92e-06 &2.42e-07 &1.51e-08 &-&4.02 &4.00 &1.20e-03 &3.45e-06 &2.14e-07 &1.34e-08 &-&4.01 &4.00 &7.70e-04 \\
\hline
\end{tabular}
}
\caption{$L_{2}$ errors using the IP scheme for $p=3$ for triangles.}
\label{L2 errors IP p3 tri}
\end{table}

\begin{table}[H]
\centering
\resizebox{\textwidth}{!}{
\begin{tabular}{|c|c||c|c|c|ccc|c||c|c|c|ccc|c|}
\hline 
& &\multicolumn{7}{l||}{$s_{theory}$} &\multicolumn{7}{l|}{$1.5s_{theory}$} \\
\hline
$c$ &$\kappa$ &$N_{x}$=16 &$N_{x}$=32&$N_{x}=$64 &\multicolumn{3}{c|}{OOA} &$\Delta t_{max}$&$N_{x}$=16 &$N_{x}$=32&$N_{x}$=64 &\multicolumn{3}{c|}{OOA} &$\Delta t_{max}$ \\
\hline
\multirow{2}{*}{$c_{DG}$} &$\kappa_{DG}$&3.65e-06 &2.29e-07 &1.43e-08 &-&3.99 &4.00 &4.40e-04 &3.39e-06 &2.14e-07 &1.34e-08 &-&3.98 &4.00 &2.70e-04 \\
\cline{2-16}
&$\kappa_{+}$&3.65e-06 &2.29e-07 &1.43e-08 &-&3.99 &4.00 &4.40e-04 &3.39e-06 &2.14e-07 &1.34e-08 &-&3.98 &4.00 &2.70e-04 \\
\hline
\multirow{2}{*}{$c_{+}$} &$\kappa_{DG}$&3.71e-06 &2.30e-07 &1.44e-08 &-&4.01 &4.00 &5.60e-04 &3.47e-06 &2.16e-07 &1.35e-08 &-&4.01 &4.00 &3.50e-04 \\
\cline{2-16}
&$\kappa_{+}$&3.71e-06 &2.30e-07 &1.44e-08 &-&4.01 &4.00 &5.60e-04 &3.47e-06 &2.16e-07 &1.35e-08 &-&4.01 &4.00 &3.50e-04 \\
\hline
\end{tabular}
}
\caption{$L_{2}$ errors using the BR2 scheme for $p=3$ for triangles.}
\label{L2 errors BR2 p3 tri}
\end{table}

The maximal time step provided by the above tables can be compared with the von Neumann analysis: $\Delta t_{max}=\dfrac{\Delta \hat{t}_{max}}{b}\left(\dfrac{2}{N_{x}}\right)^{2}$. The maximum relative error is at $2.4\%$, which concurs with the maximal time steps obtained in this section. 

For both the IP and BR2 numerical fluxes and for both $p=2$ and $p=3$, we obtain the expected order of accuracy: $p+1$. Similarly, we observe that the IP scheme provides a higher time step than the BR2 method but the error from the IP scheme is also higher.

\section{Conclusion}
This article provides a theoretical proof of energy stability for the diffusion case for triangles using the IP and BR2 schemes. Bounds for the penalty term $\tau$ for the IP scheme and $s$ for the BR2 scheme to ensure stability for various ESFR schemes were obtained. These theoretical proofs were validated through numerical simulations and orders of accuracy were provided. It was established that for both the IP and BR2 numerical fluxes, the stability of the ESFR scheme is independent of the auxiliary correction field. A von-Neumann analysis was conducted to present methods which procure a higher maximal time step than the classical DG method. The trade-off is that the $L_{2}$-error increases for these methods. While the BR2 scheme procures the least amount of error, the IP scheme has the highest time step. Further analysis will be conducted to extend this proof for tetrahedra elements.
\section*{Acknowledgements}
We would like to acknowledge the final support of Natural Sciences and Engineering Research Council of Canada Discovery Grant Program and McGill University. We would also like to thank Philip Zwanenburg for helpful feedback.
\appendix
\section*{Appendix}
\section{Theoretical proof of the independecy of $\kappa$ for $p=1$}\label{sec:appendix independency kappa}
This section proposes a theoretical proof of Postulate~\ref{pos:independency of kappa} and hence of Postulate~\ref{pos:independent} for $p=1$. 
\begin{thm}
Let the solution of the diffusion equation be approximated by a polynomial of degree $p=1$ on the reference triangle. Let $\psi_{ei}$ be the correction field associated to face $e$ at the flux point $i$ parametrized by $\kappa$ and $\mathbf{\phi}_{fj}$ the correction field, associated to face $f$ at the flux point $j$, parametrized by $c$. Let $\left(\mathbf{r}_{f}^{j}\right)_{j\in\llbracket1,N_{fp}\rrbracket}$ be the Gauss-Legendre flux points on face $f$. Then

$\forall c\in\left[0,\infty\right[,$
\begin{equation}
R_{ei}\left(\mathbf{r}\right)=\left(-\hat{\nabla}\psi_{ei}\left(\mathbf{r}\right)\cdot\hat{\mathbf{n}}_{ei}+\mathlarger{\sum}_{f=1}^{3}\mathlarger{\sum}_{j=1}^{N_{fp}}\psi_{ei}\left(\mathbf{r}_{f}^{j}\right)\left(\hat{\mathbf{n}}_{ei}\cdot\mathbf{\hat{n}}_{fj}\right)\phi_{fj}\left(\mathbf{r}\right)\right),
\end{equation}
is independent of the parameter $\kappa$.
\end{thm}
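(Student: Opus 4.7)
My plan is to exploit the fact that for $p=1$ the matrix appearing in the ESFR defining equation~\eqref{eq:ESFR formula kappa} has a very special block structure, so that one can solve explicitly for the coefficients $\sigma_{ei,k}^{\kappa}$ and isolate the $\kappa$-dependent part of $\psi_{ei}$. Then I would show that this $\kappa$-dependent part cancels between the two terms of $R_{ei}(\mathbf{r})$.

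First, I would fix the Dubiner orthonormal basis $(L_1,L_2,L_3)$ on the equilateral reference triangle, with $L_1$ chosen constant and $L_2,L_3$ linear. Since $D^{(m,1)}L_1 = 0$ and $D^{(m,1)}L_k$ is a constant for $k=2,3$, the matrix
\[
M_{ik}\;=\;\sum_{m=1}^{2}\binom{1}{m-1}\bigl(D^{(m,1)}L_i\bigr)\bigl(D^{(m,1)}L_k\bigr)
\]
has zero first row and first column, and a non-trivial symmetric $2\times 2$ block $\widetilde{M}$ in the $(k,i)\in\{2,3\}^{2}$ position. Rewriting~\eqref{eq:ESFR formula kappa} as $(I+\kappa M)\sigma_{ei}^{\kappa}=b_{ei}$ with $b_{ei,i}=\int_{\Gamma_s}(\mathbf{g}_{ei}\cdot\hat{\mathbf{n}})L_i\,\mathrm{d}\Gamma_s$, one sees immediately that $\sigma_{ei,1}^{\kappa}=b_{ei,1}$ is independent of $\kappa$, while $(\sigma_{ei,2}^{\kappa},\sigma_{ei,3}^{\kappa})^{T}=(I_{2}+\kappa\widetilde{M})^{-1}(b_{ei,2},b_{ei,3})^{T}$ carries all the $\kappa$-dependence. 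Hence
\[
\psi_{ei}(\mathbf{r})\;=\;\underbrace{b_{ei,1}L_{1}}_{\kappa\text{-independent}}\;+\;\alpha_{2}(\kappa)L_{2}(\mathbf{r})+\alpha_{3}(\kappa)L_{3}(\mathbf{r}),
\]
and the $\kappa$-dependent part $\widetilde{\psi}_{ei}(\mathbf{r})=\alpha_{2}(\kappa)L_{2}(\mathbf{r})+\alpha_{3}(\kappa)L_{3}(\mathbf{r})$ is a genuine linear function of $\mathbf{r}$.

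Second, I would substitute this decomposition into $R_{ei}(\mathbf{r})$. The $\kappa$-independent piece trivially contributes only $\kappa$-independent terms, since $\hat{\nabla}L_{1}=0$ and $b_{ei,1}L_{1}$ evaluated at any point is a constant. The question reduces to showing that for every linear function $\widetilde{\psi}(\mathbf{r})\in\mathrm{span}(L_{2},L_{3})$,
\[
-\hat{\nabla}\widetilde{\psi}\cdot\hat{\mathbf{n}}_{ei}\;+\;\sum_{f=1}^{3}\sum_{j=1}^{N_{fp}}\widetilde{\psi}(\mathbf{r}_{f}^{j})\,(\hat{\mathbf{n}}_{ei}\cdot\hat{\mathbf{n}}_{fj})\,\phi_{fj}(\mathbf{r})\;\equiv\;0.
\]
Equivalently, using linearity, it suffices to verify this identity separately for $\widetilde{\psi}=L_{2}$ and $\widetilde{\psi}=L_{3}$. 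For each such basis function I would: (i) read off $\hat{\nabla}L_{k}\cdot\hat{\mathbf{n}}_{ei}$, which is a constant determined by the basis and the edge $e$; and (ii) compute explicitly the values $L_{k}(\mathbf{r}_{f}^{j})$ at the $N_{fp}=2$ Gauss--Legendre points of each of the three faces, then evaluate the double sum. The primary correction fields $\phi_{fj}(\mathbf{r})$, being expressible through~\eqref{eq: correction fonction decomposition} for a \emph{fixed} choice of $c$, contribute known constants times $L_{1},L_{2},L_{3}$ which I can list. The claim then reduces to a finite set of explicit linear identities in three unknowns (the coefficients of $L_{1},L_{2},L_{3}$ in the double sum), which I would verify by direct calculation using the symmetry of the equilateral triangle and the exactness of Gauss--Legendre quadrature on degree-$2p$ polynomials.

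The main obstacle is step (ii): one must show the identity holds for \emph{every} $c\ge 0$, not just a particular value. I would handle this by first proving it at $c=c_{DG}=0$, where $\phi_{fj}^{DG}$ satisfies the simple reproducing identity $\int_{\Omega_{s}}\phi_{fj}^{DG}\Phi\,\mathrm{d}\Omega_{s}=\omega_{j}\Phi(\mathbf{r}_{j}^{f})$ (Lemma~\ref{lem:equivalence DG FR}), which turns the double sum into an $L^{2}$-projection onto $P_{1}(\Omega_{s})$ of a gradient-type quantity and immediately yields cancellation. Then I would show the difference $\phi_{fj}-\phi_{fj}^{DG}$ lies in a subspace on which the sum $\sum_{f,j}(\hat{\mathbf{n}}_{ei}\cdot\hat{\mathbf{n}}_{fj})L_{k}(\mathbf{r}_{f}^{j})\,(\phi_{fj}-\phi_{fj}^{DG})$ vanishes, using the fact (from Castonguay et al.~\cite{castonguay_new_2012}) that the ESFR correction \emph{shifts only the top mode} $L_{2},L_{3}$ of the DG correction field by a symmetric amount, together with the symmetry relations between the three faces of the equilateral reference triangle that follow from mirror/rotational symmetry of the basis. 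This last reduction from arbitrary $c$ to $c=0$ is where the argument becomes delicate; extending it to higher $p$ would require understanding how $D^{(m,p)}$ interacts with the full hierarchy of Dubiner modes, which is precisely the obstacle that forces the authors to state the general case as a postulate.
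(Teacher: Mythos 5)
Your proposal follows the paper's own first half essentially verbatim: the paper also solves the $p=1$ ESFR system $(\mathbf{I}+\kappa\mathbf{M})\sigma_{ei}^{\kappa}=\mathbf{b}_{ei}$ explicitly, observes that $\sigma_{ei,1}^{\kappa}$ is $\kappa$-free while the $L_{2},L_{3}$ coefficients carry a common factor $1/(1+6\alpha^{2}\kappa)$, and thereby reduces the claim to showing that the contribution of the mean-zero part of $\psi_{ei}$ to $R_{ei}$ vanishes identically. Where you genuinely diverge is in how that vanishing is established: the paper simply evaluates everything in coordinates (Dubiner basis values at the six Gauss--Legendre flux points, Table of flux-point coordinates, face normals) and checks that the projections on $\mathbf{e}_{r}$ and $\mathbf{e}_{s}$ collapse to zero, whereas you propose a structural argument via the DG reproducing identity. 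Your route does work, and the step you flag as delicate closes more easily than you suggest: testing $R[\widetilde{\psi}]$ against any $\Phi\in P_{1}(\Omega_{s})$ and using exactness of the edge quadrature together with the divergence theorem gives
\begin{equation*}
\int_{\Omega_{s}}R[\widetilde{\psi}]\,\Phi\,\mbox{d}\Omega_{s}
=-\hat{\nabla}\widetilde{\psi}\cdot\hat{\mathbf{n}}_{ei}\int_{\Omega_{s}}\Phi\,\mbox{d}\Omega_{s}
+\hat{\mathbf{n}}_{ei}\cdot\int_{\Omega_{s}}\left(\Phi\,\hat{\nabla}\widetilde{\psi}+\widetilde{\psi}\,\hat{\nabla}\Phi\right)\mbox{d}\Omega_{s}
=\hat{\mathbf{n}}_{ei}\cdot\hat{\nabla}\Phi\int_{\Omega_{s}}\widetilde{\psi}\,\mbox{d}\Omega_{s}=0,
\end{equation*}
since $\hat{\nabla}\widetilde{\psi}$ and $\hat{\nabla}\Phi$ are constant vectors and $\widetilde{\psi}\perp L_{1}$; as $R[\widetilde{\psi}]\in P_{1}(\Omega_{s})$, it is therefore zero. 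For the passage from $c=0$ to general $c$ you do not need any mirror/rotational symmetry of the three faces: for $p=1$ on the equilateral element the matrix $\widetilde{M}$ is the scalar $6\alpha^{2}\mathbf{I}_{2}$, so $\phi_{fj}-\phi_{fj}^{DG}$ is a $c$-dependent scalar multiple of $\omega_{j}\left(L_{2}(\mathbf{r}_{j}^{f})L_{2}+L_{3}(\mathbf{r}_{j}^{f})L_{3}\right)$, and the identical divergence-theorem/orthogonality-to-constants computation annihilates its contribution. What each approach buys: the paper's brute-force evaluation is self-contained and verifiable line by line but gives no insight; your argument explains \emph{why} the cancellation happens and makes transparent why the general-$p$ case resists this treatment (for $p\geq 2$ the operator $(\mathbf{I}+c\mathbf{M})^{-1}-\mathbf{I}$ no longer acts as a single scalar on the non-constant modes, so the difference $\phi_{fj}-\phi_{fj}^{DG}$ is not proportional to a quantity with the reproducing structure). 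As written your step (ii) is a sketch rather than a proof, so you would still need to supply the computation above to make it complete.
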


\begin{proof}
The normal $\mathbf{\hat{n}}_{ei}$ is independent of $\kappa$. As a result, proving this Theorem is equivalent to showing that,
\begin{equation}\label{eq:vector show it is independent of kappa}
\mathbf{R}_{ei}\left(\mathbf{r}\right)=\left(-\hat{\nabla}\psi_{ei}\left(\mathbf{r}\right)+\mathlarger{\sum}_{f=1}^{3}\mathlarger{\sum}_{j=1}^{N_{fp}}\psi_{ei}\left(\mathbf{r}_{f}^{j}\right)\phi_{fj}\left(\mathbf{r}\right)\mathbf{\hat{n}}_{fj}\right),
\end{equation}
is independent of $\kappa$.

\begin{figure}[H]
\centering
\includegraphics[width=3.2in]{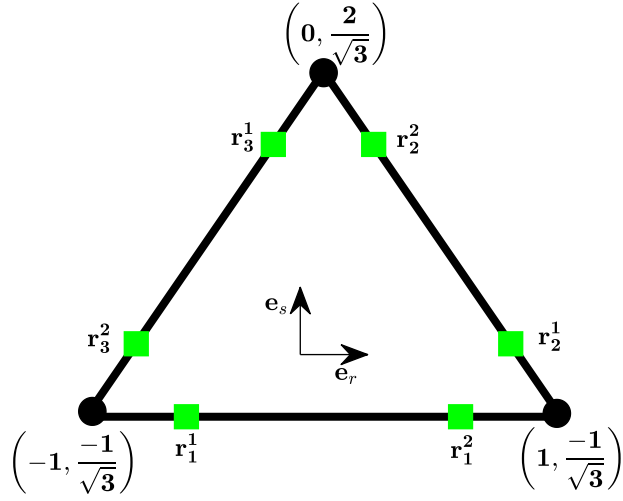}
\caption{Reference element for $p=1$, the green squares represent the FP of the Gauss-Legendre quadrature}
\label{fig:ref p1}
\end{figure}

The reference element is equilateral, thus the flux points are symmetric, Figure~\ref{fig:ref p1}. Moreover, the weights of the Gauss-Legendre quadrature are equal to 1 $\left(\omega_{1}=\omega_{2}=1\right)$.

\begin{table}[H]
\centering
\begin{tabular}{|c||c|c|c|c|c|c|}
\hline
$\left(r,s\right)$&$\mathbf{r}_{1}^{1}$&$\mathbf{r}_{1}^{2}$&$\mathbf{r}_{2}^{1}$&$\mathbf{r}_{2}^{2}$&$\mathbf{r}_{3}^{1}$&$\mathbf{r}_{3}^{2}$\\
\hline
$r$&$-\dfrac{1}{\sqrt{3}}$&$-r_{1}^{1}$&$\dfrac{1}{2}+\dfrac{\sqrt{3}}{6}$&$\dfrac{1}{2}-\dfrac{\sqrt{3}}{6}$&$-r_{2}^{2}$&$-r_{2}^{1}$\\
\hline
$s$&$r_{1}^{1}$&$r_{1}^{1}$&$-r_{2}^{2}$&$r_{2}^{1}$&$s_{2}^{2}$&$s_{2}^{1}$\\
\hline
\end{tabular}
\caption{Numerical values of coordinates of the flux points.}
\label{tab:FP coordinates p1}
\end{table}

As no mathematical formula, known to the authors, enables to simplify equation~\eqref{eq:vector show it is independent of kappa}, we expand the correction fields $\psi_{ei}$ and $\phi_{fj}$ through equation~\eqref{eq: correction fonction decomposition},
\begin{equation}
\mathbf{R}_{ei}\left(\mathbf{r}\right)=\left(\mathlarger{\sum}_{k=1}^{3}\sigma_{ei,k}^{\kappa}\left[-\mathbf{\hat{\nabla}}L_{k}\left(\mathbf{r}\right)+\mathlarger{\sum}_{f=1}^{3}\mathlarger{\sum}_{j=1}^{2}\mathlarger{\sum}_{n=1}^{3}\left(L_{k}\left(r_{f}^{j}\right)\sigma_{fj,n}^{c}L_{n}\left(\mathbf{r}\right)\mathbf{\hat{n}}_{fj}\right)\right]\right).
\end{equation}

Before expanding further, we write the analytical formula for the Dubiner basis $\left(L_{i}\right)_{i\in\llbracket 1,3\rrbracket}$~\cite{castonguay_new_2012},
\begin{equation}
\begin{array}{lll}
L_{1}\left(r,s\right)&=&\dfrac{1}{3^{1/4}}\\
L_{2}\left(r,s\right)&=&\dfrac{\sqrt{6}}{3^{1/4}}s\\
L_{3}\left(r,s\right)&=&\dfrac{\sqrt{6}}{3^{1/4}}r
\end{array}
\end{equation}

Then, we evaluate the coefficients $\left(\sigma_{ei,k}^{\kappa}\right)_{k\in\llbracket 1,3\rrbracket}$ via equation~\eqref{eq:ESFR formula kappa},

\begin{equation}
\mathbf{A}\mathbf{\sigma}^{\kappa}_{ei}=\mathbf{b}_{ei},
\end{equation}
where $\mathbf{A}$ has non-zero values only along its diagonal ($A_{11}=1$, $A_{22}=1+\kappa\left(D^{\left(2,1\right)}L_{2}\right)^{2}$, $A_{33}=1+\kappa\left(D^{\left(1,2\right)}L_{3}\right)^{2}$) and $b_{ei,j}=L_{j}\left(\mathbf{r}_{e}^{i}\right)$. To simplify the derivations, we denote $\alpha=\frac{1}{3^{1/4}}$. Hence, we obtain,
\begin{equation}
\begin{bmatrix}
\sigma_{ei,1}^{\kappa}\\
\sigma_{ei,2}^{\kappa}\\
\sigma_{ei,3}^{\kappa}
\end{bmatrix}
=
\begin{bmatrix}
\alpha\vspace*{0.2cm}\\
\dfrac{\alpha\sqrt{6}s_{e}^{i}}{1+6\alpha^{2}\kappa}\vspace*{0.2cm}\\
\dfrac{\alpha\sqrt{6}r_{e}^{i}}{1+6\alpha^{2}\kappa}
\end{bmatrix}.
\end{equation}

Only the last two terms depend on $\kappa$; therefore we only need to prove that the projection of $\mathbf{R}_{ei,1}\left(\mathbf{r}\right)$ on both $\mathbf{e}_{r}$ and $\mathbf{e}_{s}$ is independent of $\kappa$, where
\begin{equation}
\mathbf{R}_{ei,1}\left(\mathbf{r}\right)=\left(\mathlarger{\sum}_{k=2}^{3}\sigma_{ei,k}^{\kappa}\left[-\mathbf{\hat{\nabla}}L_{k}\left(\mathbf{r}\right)+\mathlarger{\sum}_{f=1}^{3}\mathlarger{\sum}_{j=1}^{2}\mathlarger{\sum}_{n=1}^{3}\left(L_{k}\left(\mathbf{r}_{f}^{j}\right)\sigma_{fj,n}^{c}L_{n}\left(\mathbf{r}\right)\mathbf{\hat{n}}_{fj}\right)\right]\right).
\end{equation}

In the following, we will show that these two projections are equal to $0$.

\textbf{Projection on $\mathbf{e}_{r}$}

We can observe from Figure~\ref{fig:ref p1} that the FP are symmetric. As a consequence the terms $\left(L_{k}\left(\mathbf{r}_{f}^{j}\right)\right)_{\left(f,j\right)\in\llbracket 1,3\rrbracket\times\llbracket 1,2\rrbracket}$ only require to be computed at the FP $\left(1,1\right)$, $\left(2,1\right)$ and $\left(2,2\right)$. Moreover $\mathbf{\hat{n}}_{1j}\cdot\mathbf{e}_{r}=0$. These properties yield,
\begin{equation}
\begin{array}{lllllll}
\mathbf{R}_{ei,1}\left(\mathbf{r}\right)\cdot\mathbf{e}_{r}&=&\dfrac{\alpha\sqrt{6}s_{e}^{i}}{1+6\alpha^{2}\kappa}\Bigg(&&&\dfrac{\sqrt{3}}{2}L_{2}\left(s_{2}^{1}\right)&\left(\alpha^{2}+\dfrac{6\alpha^{2}}{1+6\alpha^{2}c}\left(s_{2}^{1} s+r_{2}^{1} r\right)\right)\\
&&&&+&\dfrac{\sqrt{3}}{2}L_{2}\left(s_{2}^{2}\right)&\left(\alpha^{2}+\dfrac{6\alpha^{2}}{1+6\alpha^{2}c}\left(s_{2}^{2} s+r_{2}^{2} r\right)\right)\\
&&&&-&\dfrac{\sqrt{3}}{2}L_{2}\left(s_{2}^{2}\right)&\left(\alpha^{2}+\dfrac{6\alpha^{2}}{1+6\alpha^{2}c}\left(s_{2}^{2} s-r_{2}^{2} r\right)\right)\\
&&&&-&\dfrac{\sqrt{3}}{2}L_{2}\left(s_{2}^{1}\right)&\left(\alpha^{2}+\dfrac{6\alpha^{2}}{1+6\alpha^{2}c}\left(s_{2}^{1} s-r_{2}^{1} r\right)\right)\Bigg)\\
&&+\dfrac{\alpha\sqrt{6}r_{e}^{i}}{1+6\alpha^{2}\kappa}\Bigg(&-\sqrt{6}\alpha&+&\dfrac{\sqrt{3}}{2}L_{3}\left(r_{2}^{1}\right)&\left(\alpha^{2}+\dfrac{6\alpha^{2}}{1+6\alpha^{2}c}\left(s_{2}^{1} s+r_{2}^{1} r\right)\right)\\
&&&&+&\dfrac{\sqrt{3}}{2}L_{3}\left(r_{2}^{2}\right)&\left(\alpha^{2}+\dfrac{6\alpha^{2}}{1+6\alpha^{2}c}\left(s_{2}^{2} s+r_{2}^{2} r\right)\right)\\
&&&&+&\dfrac{\sqrt{3}}{2}L_{3}\left(r_{2}^{2}\right)&\left(\alpha^{2}+\dfrac{6\alpha^{2}}{1+6\alpha^{2}c}\left(s_{2}^{2} s-r_{2}^{2} r\right)\right)\\
&&&&+&\dfrac{\sqrt{3}}{2}L_{3}\left(r_{2}^{1}\right)&\left(\alpha^{2}+\dfrac{6\alpha^{2}}{1+6\alpha^{2}c}\left(s_{2}^{1} s-r_{2}^{1} r\right)\right)\Bigg)\\
&=&\dfrac{\alpha\sqrt{6}s_{e}^{i}}{1+6\alpha^{2}\kappa}\Bigg(&&&\sqrt{3}L_{2}\left(s_{2}^{1}\right)&\left(\dfrac{6\alpha^{2}}{1+6\alpha^{2}c}r_{2}^{1} r\right)\\
&&&&+&\sqrt{3}L_{2}\left(s_{2}^{2}\right)&\left(\dfrac{6\alpha^{2}}{1+6\alpha^{2}c}r_{2}^{2} r\right)\Bigg)\\
&&+\dfrac{\alpha\sqrt{6}r_{e}^{i}}{1+6\alpha^{2}\kappa}\Bigg(&-\sqrt{6}\alpha&+&\sqrt{3}L_{3}\left(r_{2}^{1}\right)&\left(\alpha^{2}+\dfrac{6\alpha^{2}}{1+6\alpha^{2}c}s_{2}^{1} s\right)\\
&&&&+&\sqrt{3}L_{3}\left(r_{2}^{2}\right)&\left(\alpha^{2}+\dfrac{6\alpha^{2}}{1+6\alpha^{2}c}s_{2}^{2} s\right)\Bigg).
\end{array}
\end{equation}

We now replace the values of $s_{f}^{j}$ with the values of $r_{f}^{j}$ according to Table~\ref{tab:FP coordinates p1} and expand the remaining functions $L_{k}$,
\begin{equation}
\begin{array}{lllllll}
\mathbf{R}_{ei,1}\left(\mathbf{r}\right)\cdot\mathbf{e}_{r}&=&\dfrac{\alpha\sqrt{6}s_{e}^{i}}{1+6\alpha^{2}\kappa}\Bigg(&&&-3\sqrt{2}\alpha r_{2}^{2}&\left(\dfrac{6\alpha^{2}}{1+6\alpha^{2}c}r_{2}^{1} r\right)\\
&&&&+&3\sqrt{2}\alpha r_{2}^{1}&\left(\dfrac{6\alpha^{2}}{1+6\alpha^{2}c}r_{2}^{2} r\right)\Bigg)\\
&&+\dfrac{\alpha\sqrt{6}r_{e}^{i}}{1+6\alpha^{2}\kappa}\Bigg(&-\sqrt{6}\alpha&+&3\sqrt{2}\alpha r_{2}^{1}&\left(\alpha^{2}-\dfrac{6\alpha^{2}}{1+6\alpha^{2}c}r_{2}^{2} s\right)\\
&&&&+&3\sqrt{2}\alpha r_{2}^{2}&\left(\alpha^{2}+\dfrac{6\alpha^{2}}{1+6\alpha^{2}c}r_{2}^{1} s\right)\Bigg).
\end{array}
\end{equation}

We finally obtain
\begin{equation}\label{eq:final equation er}
\mathbf{R}_{ei,1}\left(\mathbf{r}\right)\cdot\mathbf{e}_{r}=-\dfrac{6\alpha^{2}r_{e}^{i}}{1+6\alpha^{2}\kappa}\left(-1+\sqrt{3}\alpha^{2}\left(r_{2}^{1}+r_{2}^{2}\right)\right),
\end{equation}
since $\sqrt{3}\alpha^{2}=$ and $\left(r_{2}^{1}+r_{2}^{2}\right)$ are equal to 1, we retrieve the expected result.

The projection on $\mathbf{e}_{s}$ is similar hence only the main steps will be given.

\textbf{Projection on $\mathbf{e}_{s}$}

As $\mathbf{\hat{n}}_{1j}\cdot\mathbf{e}_{s}=-1$, additional terms need to be derived.

\begin{equation}
\begin{array}{lllllll}
\mathbf{R}_{ei,1}\left(\mathbf{r}\right)\cdot\mathbf{e}_{s}&=&\dfrac{\alpha\sqrt{6}s_{e}^{i}}{1+6\alpha^{2}\kappa}\Bigg(&-\sqrt{6}\alpha&-&L_{2}\left(s_{1}^{1}\right)&\left(\alpha^{2}+\dfrac{6\alpha^{2}}{1+6\alpha^{2}c}\left(s_{1}^{1} s+r_{1}^{1} r\right)\right)\\
&&&&-&L_{2}\left(s_{1}^{1}\right)&\left(\alpha^{2}+\dfrac{6\alpha^{2}}{1+6\alpha^{2}c}\left(s_{1}^{1} s-r_{1}^{1} r\right)\right)\\
&&&&+&\dfrac{1}{2}L_{2}\left(s_{2}^{1}\right)&\left(\alpha^{2}+\dfrac{6\alpha^{2}}{1+6\alpha^{2}c}\left(s_{2}^{1} s+r_{2}^{1} r\right)\right)\\
&&&&+&\dfrac{1}{2}L_{2}\left(s_{2}^{2}\right)&\left(\alpha^{2}+\dfrac{6\alpha^{2}}{1+6\alpha^{2}c}\left(s_{2}^{2} s+r_{2}^{2} r\right)\right)\\
&&&&+&\dfrac{1}{2}L_{2}\left(s_{2}^{2}\right)&\left(\alpha^{2}+\dfrac{6\alpha^{2}}{1+6\alpha^{2}c}\left(s_{2}^{2} s-r_{2}^{2} r\right)\right)\\
&&&&+&\dfrac{1}{2}L_{2}\left(s_{2}^{1}\right)&\left(\alpha^{2}+\dfrac{6\alpha^{2}}{1+6\alpha^{2}c}\left(s_{2}^{1} s-r_{2}^{1} r\right)\right)\Bigg)\\
&&+\dfrac{\alpha\sqrt{6}r_{e}^{i}}{1+6\alpha^{2}\kappa}\Bigg(&&-&L_{3}\left(r_{1}^{1}\right)&\left(\alpha^{2}+\dfrac{6\alpha^{2}}{1+6\alpha^{2}c}\left(s_{1}^{1} s+r_{1}^{1} r\right)\right)\\
&&&&+&L_{3}\left(r_{1}^{1}\right)&\left(\alpha^{2}+\dfrac{6\alpha^{2}}{1+6\alpha^{2}c}\left(s_{1}^{1} s-r_{1}^{1} r\right)\right)\\
&&&&+&\dfrac{1}{2}L_{3}\left(r_{2}^{1}\right)&\left(\alpha^{2}+\dfrac{6\alpha^{2}}{1+6\alpha^{2}c}\left(s_{2}^{1} s+r_{2}^{1} r\right)\right)\\
&&&&+&\dfrac{1}{2}L_{3}\left(r_{2}^{2}\right)&\left(\alpha^{2}+\dfrac{6\alpha^{2}}{1+6\alpha^{2}c}\left(s_{2}^{2} s+r_{2}^{2} r\right)\right)\\
&&&&-&\dfrac{1}{2}L_{3}\left(r_{2}^{2}\right)&\left(\alpha^{2}+\dfrac{6\alpha^{2}}{1+6\alpha^{2}c}\left(s_{2}^{2} s-r_{2}^{2} r\right)\right)\\
&&&&-&\dfrac{1}{2}L_{3}\left(r_{2}^{1}\right)&\left(\alpha^{2}+\dfrac{6\alpha^{2}}{1+6\alpha^{2}c}\left(s_{2}^{1} s-r_{2}^{1} r\right)\right)\Bigg).
\end{array}
\end{equation}
After further simplifications we obtain,
\begin{equation}
\begin{array}{lll}
\mathbf{R}_{ei,1}\left(\mathbf{r}\right)\cdot\mathbf{e}_{s}&=&\dfrac{6\alpha^{2}s_{e}^{i}}{1+6\alpha^{2}\kappa}\left((-1+\alpha^{2}\left(-2s_{1}^{1}+s_{2}^{1}+s_{2}^{2}\right)\right)\\
&+&\dfrac{6\alpha^{2}s_{e}^{i}}{1+6\alpha^{2}\kappa}\dfrac{6\alpha^{2}s}{1+6\alpha^{2}c}\left(-2\left(s_{1}^{1}\right)^{2}+\left(s_{2}^{1}\right)^{2}+\left(s_{2}^{2}\right)^{2}\right)\\
&+&\dfrac{6\alpha^{2}r_{e}^{i}}{1+6\alpha^{2}\kappa}\dfrac{6\alpha^{2}r}{1+6\alpha^{2}c}\left(-2\left(r_{1}^{1}\right)^{2}+\left(r_{2}^{1}\right)^{2}+\left(r_{2}^{2}\right)^{2}\right).
\end{array}
\end{equation}
Replacing $r_{f}^{j}$ and $s_{f}^{j}$ with their numerical values in Table~\ref{tab:FP coordinates p1}, the last two lines are equal to 0. Futhermore,
\begin{equation}
\begin{array}{lll}
\alpha^{2}\left(-2s_{1}^{1}+s_{2}^{1}+s_{2}^{2}\right)&=&\dfrac{1}{\sqrt{3}}\left(\dfrac{2}{\sqrt{3}}+\left(\dfrac{\sqrt{3}}{6}-\dfrac{1}{2}\right)+\left(\dfrac{1}{2}+\dfrac{\sqrt{3}}{6}\right)\right)\\
&=&1
\end{array}
\end{equation}

Hence we have $\mathbf{R}_{ei,1}\left(\mathbf{r}\right)\cdot\mathbf{e}_{s}=0$.
\end{proof}

\begin{cor}
The diffusion equation is independent of $\kappa$ when employing the IP or BR2 numerical fluxes with a $p=1$ interpolation.
\end{cor}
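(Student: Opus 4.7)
My plan is to reduce the corollary to the Theorem just established in this appendix, in exactly the same way Postulate~\ref{pos:independent} was reduced to Postulate~\ref{pos:independency of kappa}. The starting point is the fully expanded semi-discrete form~\eqref{eq:ini demo}, obtained in Section~\ref{sec:independent kappa} by substituting the IP/BR2 correction $\hat{u}^{*}_{n,fj}-\hat{u}_{n,fj}=-\tfrac{1}{2}\llbracket \hat{u}\rrbracket_{fj}$ into the auxiliary equation and then into the primary equation. In that expression, the only occurrences of the auxiliary correction field $\psi_{ei}$ (and hence of $\kappa$) appear inside the factor
\begin{equation*}
R_{ei}(\mathbf{r})=-\hat{\nabla}\psi_{ei}(\mathbf{r})\cdot\hat{\mathbf{n}}_{ei}+\sum_{f=1}^{3}\sum_{j=1}^{N_{fp}}\psi_{ei}(\mathbf{r}_{f}^{j})\bigl(\hat{\mathbf{n}}_{ei}\cdot\hat{\mathbf{n}}_{fj}\bigr)\phi_{fj}(\mathbf{r}),
\end{equation*}
multiplying the scalar jumps $\llbracket \hat{u}\rrbracket_{ei}/2$ and the geometric factor $\mathbf{J}_{n}^{-1}\mathbf{J}_{n}^{-T}$. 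Neither the jumps nor the Jacobian factors depend on $\kappa$, so the $\kappa$-dependence of the right-hand side of~\eqref{eq:ini demo} is entirely carried by the $R_{ei}$ terms.

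First I would quote equation~\eqref{eq:ini demo} verbatim and point to the last line as the unique $\kappa$-carrying contribution. Then I would invoke the Theorem proved just above, which establishes that for $p=1$ the vector field $R_{ei}(\mathbf{r})$ (equivalently, its projection onto $\hat{\mathbf{n}}_{ei}$, by linearity in that normal) is independent of $\kappa$ for every edge $e$, every flux point $i$, and every admissible value of $c\in[0,\infty)$. Substituting this $\kappa$-independence back into~\eqref{eq:ini demo} leaves every term on the right-hand side independent of $\kappa$, so $\partial\hat{u}_{n}/\partial t$ is independent of $\kappa$. Since the time derivative governing the evolution of $\hat{u}_{n}$ does not depend on $\kappa$, neither does the discrete solution itself for any initial data and time integrator, which is the statement of the corollary.

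There is essentially no obstacle here: the corollary is a one-line consequence of the preceding Theorem combined with the structural observation that, for the IP and BR2 fluxes, $\hat{\mathbf{q}}$ only enters through $R_{ei}$ after substitution. The only subtlety worth flagging is that the IP/BR2 numerical fluxes $u^{*}=\{\{u\}\}$ and $\mathbf{q}^{*}\in\{\{\{\nabla u\}\}-\tau\mathbf{\llbracket u\rrbracket},\{\{\nabla u\}\}+s\{\{\mathbf{r}^{e}(\mathbf{\llbracket u\rrbracket})\}\}\}$ involve no evaluation of the corrected auxiliary variable $\hat{\mathbf{q}}_{n}$ at interfaces; this is precisely what allows the auxiliary equation to be fully eliminated in~\eqref{eq:ini demo} rather than appearing implicitly through $\mathbf{q}^{*}$, and it is the reason the argument would fail for, e.g., the LDG flux. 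I would state this observation explicitly before closing the proof.
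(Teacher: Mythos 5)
Your proposal is correct and follows exactly the paper's route: the paper's proof of this corollary simply refers back to the proof of Postulate~\ref{pos:independent}, which identifies the last line of equation~\eqref{eq:ini demo} as the only $\kappa$-carrying term and invokes the $\kappa$-independence of $R_{ei}$ (here supplied by the $p=1$ Theorem just proved). Your additional remarks on why the IP/BR2 fluxes permit the elimination of $\hat{\mathbf{q}}$ are consistent with the discussion already given in Section~\ref{sec:independent kappa}.
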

\begin{proof}
We let the reader refer to the proof of Postulate~\ref{pos:independent}.
\end{proof}

\section*{References}

\bibliography{bibliographie}

\end{document}